\DeclareMathOperator{\op}{op}
\DeclareMathOperator{\re}{Re}
\DeclareMathOperator{\CCS}{CCS}
\DeclareMathOperator{\Aut}{Aut}
\DeclareMathOperator{\LL}{L}
\DeclareMathOperator{\todd}{Todd}
\DeclareMathOperator{\an}{a}
\DeclareMathOperator{\FL}{FL}
\DeclareMathOperator{\dr}{dR}
\DeclareMathOperator{\ind}{ind}
\DeclareMathOperator{\spin}{spin}
\DeclareMathOperator{\CS}{CS}
\DeclareMathOperator{\odd}{odd}
\DeclareMathOperator{\even}{even}
\DeclareMathOperator{\im}{Im}
\DeclareMathOperator{\id}{id}
\DeclareMathOperator{\ch}{ch}
\DeclareMathOperator{\GL}{GL}
\DeclareMathOperator{\End}{End}
\DeclareMathOperator{\ho}{Hom}
\DeclareMathOperator{\rk}{rank}
\DeclareMathOperator{\str}{str}
\DeclareMathOperator{\tr}{tr}
\begin{document}
\setlength{\baselineskip}{1.5\baselineskip}
\theoremstyle{definition}
\newtheorem{coro}{Corollary}
\newtheorem*{note}{Note}
\newtheorem{thm}{Theorem}
\newtheorem*{ax}{Axiom}
\newtheorem{defi}{Definition}
\newtheorem{lemma}{Lemma}
\newtheorem{ass}{Assumption}
\newtheorem*{claim}{Claim}
\newtheorem{exam}{Example}
\newtheorem{prop}{Proposition}
\newtheorem{remark}{Remark}
\newcommand{\wt}[1]{{\widetilde{#1}}}
\newcommand{\ov}[1]{{\overline{#1}}}
\newcommand{\un}[1]{{\underline{#1}}}
\newcommand{\wh}[1]{{\widehat{#1}}}
\newcommand{\poin}{Poincar$\acute{\textrm{e }}$}
\newcommand{\deff}[1]{{\bf\emph{#1}}}
\newcommand{\boo}[1]{\boldsymbol{#1}}
\newcommand{\abs}[1]{\lvert#1\rvert}
\newcommand{\norm}[1]{\lVert#1\rVert}
\newcommand{\inner}[1]{\langle#1\rangle}
\newcommand{\poisson}[1]{\{#1\}}
\newcommand{\biginner}[1]{\Big\langle#1\Big\rangle}
\newcommand{\set}[1]{\{#1\}}
\newcommand{\Bigset}[1]{\Big\{#1\Big\}}
\newcommand{\BBigset}[1]{\bigg\{#1\bigg\}}
\newcommand{\dis}[1]{$\displaystyle#1$}
\newcommand{\R}{\mathbb{R}}
\newcommand{\EE}{\mathbb{E}}
\newcommand{\GG}{\mathbb{G}}
\newcommand{\N}{\mathbb{N}}
\newcommand{\Z}{\mathbb{Z}}
\newcommand{\Q}{\mathbb{Q}}
\newcommand{\E}{\mathcal{E}}
\newcommand{\T}{\mathcal{T}}
\newcommand{\G}{\mathcal{G}}
\newcommand{\F}{\mathcal{F}}
\newcommand{\I}{\mathcal{I}}
\newcommand{\V}{\mathcal{V}}
\newcommand{\W}{\mathcal{W}}
\newcommand{\SSS}{\mathcal{S}}
\newcommand{\h}{\mathbb{H}}
\newcommand{\C}{\mathbb{C}}
\newcommand{\A}{\mathcal{A}}
\newcommand{\LLL}{\mathcal{L}}
\newcommand{\HH}{\mathcal{H}}
\newcommand{\D}{\mathcal{D}}
\newcommand{\PP}{\mathcal{P}}
\newcommand{\K}{\mathcal{K}}
\newcommand{\RRR}{\mathscr{R}}
\newcommand{\AAA}{\mathscr{A}}
\newcommand{\DDD}{\mathscr{D}}
\newcommand{\e}{\mathscr{E}}
\newcommand{\w}{\mathscr{W}}
\newcommand{\f}{\mathscr{F}}
\newcommand{\z}{\mathcal{Z}}
\newcommand{\g}{\mathscr{G}}
\newcommand{\so}{\mathfrak{so}}
\newcommand{\gl}{\mathfrak{gl}}
\newcommand{\aaa}{\mathbb{A}}
\newcommand{\bbb}{\mathbb{B}}
\newcommand{\ttt}{\mathbb{T}}
\newcommand{\DD}{\mathsf{D}}
\newcommand{\ff}{\mathsf{F}}
\newcommand{\FF}{\mathbb{F}}
\newcommand{\ccc}{\bold{c}}
\newcommand{\sss}{\mathbb{S}}
\newcommand{\cdd}[1]{\[\begin{CD}#1\end{CD}\]}
\numberwithin{equation}{subsection}
\normalsize
\title[RRG theorem]{Local index theory and the Riemann--Roch--Grothendieck theorem for
complex flat vector bundles}
\author{Man-Ho Ho}
\address{Alumni of Boston University \\ Hong Kong}
\email{homanho@bu.edu}
\subjclass[2010]{Primary 58J20, 19L10, 58J28; Secondary 19L50, 19K56}
\keywords{Riemann--Roch--Grothendieck theorem, Cheeger--Chern--Simons class, local family
index theorem, Bismut--Cheeger eta form}
\maketitle
\nocite{*}
\begin{abstract}
The purpose of this paper is to give a proof of the real part of the
Riemann--Roch--Grothendieck theorem for complex flat vector bundles at the differential
form level in the even dimensional fiber case. The proof is, roughly speaking, an
application of the local family index theorem for a perturbed twisted spin Dirac operator,
a variational formula of the Bismut--Cheeger eta form without the kernel bundle assumption
in the even dimensional fiber case, and some properties of the Cheeger--Chern--Simons class
of complex flat vector bundle.
\end{abstract}
\tableofcontents
\section{Introduction}

Let $\pi:X\to B$ be a submersion with closed fibers $Z$ and $F\to X$ a complex flat vector
bundle with flat connection $\nabla^F$. The Riemann--Roch--Grothendieck (RRG) theorem
(\ref{eq 1.0.1}) for complex flat vector bundles is an equality in $H^{\odd}(B; \C/\Q)$
stating that
\begin{equation}\label{eq 1.0.1}
\CCS(H(Z, F|_Z), \nabla^{H(Z, F|_Z)}))=\int_{X/B}e(T^VX)\cup\CCS(F, \nabla^F),
\end{equation}
where $\CCS(F, \nabla^F)$ is the Cheeger--Chern--Simons class of $(F, \nabla^F)$
\cite[(1.1)]{MZ08} and $H(Z, F|_Z)\to B$ is the cohomology bundle with flat connection
$\nabla^{H(Z, F|_Z)}$. Both sides of (\ref{eq 1.0.1}) are defined in terms of the mod $\Q$
reduction of the de Rham class of certain closed odd differential forms.

Bismut--Lott prove the imaginary part of (\ref{eq 1.0.1}) at the differential form level
\cite[Theorem 3.23]{BL95}. The real part of (\ref{eq 1.0.1}), referred as the real RRG
theorem, is an equality in $H^{\odd}(X; \R/\Q)$ stating that
\begin{equation}\label{eq 1.0.2}
\re(\CCS(H(Z, F|_Z), \nabla^{H(Z, F|_Z)}))=\int_{X/B}e(T^VX)\cup\re(\CCS(F, \nabla^F)).
\end{equation}
(\ref{eq 1.0.2}) is first proved by Bismut under the assumption that the fibers are
fiberwise orientable \cite[Theorem 3.2]{B05}, and later proved by Ma--Zhang in full
generality \cite[Theorem 1.1]{MZ08}.

The main result of this paper is a $\Z_2$-graded version of (\ref{eq 1.0.2}) at the
differential form level for $\dim(Z)$ even (Theorem \ref{thm 3.3.1}), i.e. the complex
flat vector bundle and its flat connection $(F, \nabla^F)$ in (\ref{eq 1.0.2}) are
$\Z_2$-graded and $F\to X$ has virtual rank zero. By taking an appropriate $(F, \nabla^F)$
in Theorem \ref{thm 3.3.1} we recover a result by Ma--Zhang \cite[(3.98)]{MZ08} for
$\dim(Z)$ even. By arguing as in \cite[p.614]{MZ08}, which makes use of a result by Bismut
\cite[Theorem 3.12]{B05}, we obtain (\ref{eq 1.0.2}) for $\dim(Z)$ even. Along the way we
prove a variational formula of the Bismut--Cheeger eta form without the kernel bundle
assumption, which could be of independent interest. A brief description of the main results
is given in Section \ref{s 1.2}.

The proof of (\ref{eq 1.0.2}) by Bismut \cite{B05} makes critical use of adiabatic limit
computations of the reduced $\eta$-invariant of certain Dirac operators and Cheeger--Simons'
geometric index theorem \cite[Theorem 9.2]{CS85}. On the other hand, the proof of
(\ref{eq 1.0.2}) by Ma--Zhang \cite{MZ08} uses adiabatic limit computations of the reduced
$\eta$-invariant of the sub-signature operator developed by Zhang \cite{Z04}.

Our proof of Theorem \ref{thm 3.3.1} makes use of the local family index theorem (local
FIT) for a perturbed twisted spin Dirac operator and some properties of the
Cheeger--Chern--Simons class, and does not involve the reduced $\eta$-invariant nor
adiabatic limit calculations. However, since the result by Bismut \cite[Theorem 3.12]{B05}
we use to derive (\ref{eq 1.0.2}) for $\dim(Z)$ even is proved by using the reduced
$\eta$-invariant and adiabatic limit calculations, our proof of (\ref{eq 1.0.2}) for
$\dim(Z)$ even still uses these tools.

Since for $\dim(Z)$ odd, the right-hand side of (\ref{eq 1.0.2}) is zero, the odd
dimensional fiber case of the real RRG theorem states that the left-hand side of
(\ref{eq 1.0.2}) is zero. It is well known that most of the local family index type
theorems in the odd dimensional fiber case can be proved by applying a trick due to
Bismut--Freed (see the proof of \cite[Theorem 2.10]{BF86} and also \cite[\S9]{FL10}) and
the corresponding local FIT in the even dimensional fiber case. However, we are unable to
give another proof of the odd dimensional fiber case of (\ref{eq 1.0.2}) at this moment.
The reason seems to be the incompatibility of our techniques and the trick by
Bismut--Freed. However, we have an idea to overcome the incompatibility, and the same idea
can also be applied to give another proof of the odd dimensional fiber case of the
Grothendieck--Riemann--Roch theorem in flat $K$-theory (flat GRR theorem) \cite{L94} at
the differential form level. These questions will be treated in a future paper.

\subsection{Method of proof}\label{s 1.2}

In this subsection we describe the main results in this paper and outline the method of
proof.

The motivation of proving Theorem \ref{thm 3.3.1} comes from a fundamental but crucial
observation by Ma--Zhang \cite[(2.44)]{MZ08} that for a complex flat vector bundle
$F\to X$ with flat connection $\nabla^F$ equipped with a Hermitian metric $g^F$, where
$\nabla^F$ is not assumed to be unitary with respect to $g^F$, the real part of the
Cheeger--Chern--Simons class $\CCS(F, \nabla^F)$ is given by
\begin{equation}\label{eq 1.2.1}
\re(\CCS(F, \nabla^F))=\bigg[\frac{1}{k}\CS(\nabla^{kF}_0, k\nabla^{F, u})\bigg]\mod\Q
\in H^{\odd}(X; \R/\Q),
\end{equation}
where $k\in\N$ is such that $kF\cong k\C^{\rk(F)}$ as smooth complex vector bundles,
$\nabla^{F, u}$ is a unitary connection on $F\to X$ constructed out of $\nabla^F$ and
$\nabla^{kF}_0$ is a trivial connection on $kF\to X$. The details will be given in Section
\ref{s 2.3}. By considering $\F:=(F, g^F, \nabla^{F, u}, 0)$ as a generator of the flat
$K$-group $K^{-1}_{\LL}(X)$ \cite[Definition 5]{L94}, (\ref{eq 1.2.1}) can be written as
\begin{equation}\label{eq 1.2.2}
\re(\CCS(F, \nabla^F))=-\ch_{\R/\Q}(\F),
\end{equation}
where $\ch_{\R/\Q}:K^{-1}_{\LL}(X)\to H^{\odd}(X; \R/\Q)$ is the flat Chern character
\cite[Definition 9]{L94}. On the other hand, given a submersion $\pi:X\to B$ with closed,
oriented and spin$^c$ fibers and a $\Z_2$-graded generator $\E$ of $K^{-1}_{\LL}(X)$, the
flat GRR theorem \cite[Corollary 4]{L94} is an equality in $H^{\odd}(B; \R/\Q)$ stating
that
\begin{equation}\label{eq 1.2.3}
\ch_{\R/\Q}(\ind^{\an}_{\LL}(\E))=\int_{X/B}\todd(X/B)\cup\ch_{\R/\Q}(\E),
\end{equation}
where $\ind^{\an}_{\LL}:K^{-1}_{\LL}(X)\to K^{-1}_{\LL}(B)$ is the analytic index in flat
$K$-theory \cite[Definition 14]{L94}. By comparing (\ref{eq 1.0.2}) and (\ref{eq 1.2.3})
using (\ref{eq 1.2.2}) we wonder if the real RRG theorem can be proved in the same way as
the flat GRR theorem given in \cite{H16}.

Since the main idea of the proof of Theorem \ref{thm 3.3.1} is similar to that of the flat
GRR theorem given in \cite{H16}, we briefly recall it here. Given the setup of the flat
GRR theorem described above, consider the associated submersion $\wt{\pi}:\wt{X}\to\wt{B}$,
where $I=[0, 1]$, $\wt{X}=X\times I$ (similarly for $\wt{B}$) and $\wt{\pi}:=\pi\times\id$.
For any manifold $X$, define a map $i_{X, k}:X\to\wt{X}$ by $i_{X, k}(x)=(x, k)$. Given a
$\Z_2$-graded generator $(E, g^E, \nabla^E, \phi)$ of $K^{-1}_{\LL}(X)$, we construct a
complex vector bundle $\e\to\wt{X}$ with a Hermitian metric $g^{\e}$ and a unitary
connection $\nabla^{\e}$ such that $i_{X, 1}^*\nabla^{\e}=m\nabla^{E^+}$ and $i_{X, 0}^*
\nabla^{\e}=m\nabla^{E^-}$ for some $m\in\N$ satisfying $mE^+\cong mE^-$. Assume the family
of kernels $\ker(\DD^{S^c\otimes\e}_{\wt{b}})$ of the twisted spin$^c$ Dirac operator
$\DD^{S^c\otimes\e}$ parameterized by $\wt{B}$ form a ($\Z_2$-graded) complex vector
bundle. The local FIT for $\DD^{S^c\otimes\e}$ is given by
\begin{equation}\label{eq 1.2.4}
d\wt{\eta}=\int_{\wt{X}/\wt{B}}\todd(\nabla^{S^c(T^V\wt{X})})\wedge\ch(\nabla^{\e})-
\ch(\nabla^{\ker(\DD^{S^c\otimes\e})}),
\end{equation}
where $\wt{\eta}$ is the Bismut--Cheeger eta form associated to $\DD^{S^c\otimes\e}$. By
integrating (\ref{eq 1.2.4}) along the fibers of the trivial fibration $\wt{B}\to B$ we
obtain an equality of closed odd differential forms refining (\ref{eq 1.2.3}).

At a first glance, one may suspect that the above strategy can be directly applied to
prove Theorem \ref{thm 3.3.1} if the spin$^c$ Dirac operator is replaced by the de Rham
operator. However, the above strategy causes two problems in the current situation.

For the first problem, recall that the local FIT for the twisted de Rham operator
$\DD^{\wt{Z}, \dr}$ for $\wt{\pi}:\wt{X}\to\wt{B}$ \cite[\S3(c)]{B05} states that
\begin{equation}\label{eq 1.2.5}
d\wt{\eta}^{\dr}=\int_{\wt{X}/\wt{B}}e(\nabla^{T^V\wt{X}})\wedge\ch(\nabla^{\f, u})-
\ch(\nabla^{H(\wt{Z}, \f|_{\wt{Z}}), u}),
\end{equation}
where $\f\to\wt{X}$ is a complex flat vector bundle with flat connection $\nabla^{\f}$,
and $\wt{\eta}^{\dr}$ is the Bismut--Cheeger eta form associated to $\DD^{\wt{Z}, \dr}$.
Since $\ch(\nabla^{\f, u})=\rk(\f)$ and
$$\ch(\nabla^{H(\wt{Z}, \f|_{\wt{Z}}), u})=\rk(H(\wt{Z}, \f|_{\wt{Z}}))=\rk(\f)\chi
(\wt{Z}),$$
the right-hand side of (\ref{eq 1.2.5}) is zero, and therefore $\wt{\eta}^{\dr}$ is closed.
A stronger result by Bismut \cite[Theorem 3.7]{B05} states that $\wt{\eta}^{\dr}=0$. Thus
integrating (\ref{eq 1.2.5}) along the fibers of $\wt{B}\to B$ gives $0=0$ instead of an
equality of closed odd differential forms refining (\ref{eq 1.0.2}).

The second problem is the assumption of the existence of the kernel bundle
$\ker(\DD^{S^c\otimes\e})\to\wt{B}$ in our proof of the flat GRR theorem. It is well known
that if the kernel bundle does not exist, then one can perturb the Dirac operator so that
the resulting family of kernels form a vector bundle. Thus the statement can usually be
reduced to the kernel bundle case. This reduction process can be applied to the flat GRR
theorem, but not to the real RRG theorem. For the flat GRR theorem, if the kernel bundle
exists then the bundle part of the analytic index in flat $K$-theory is defined to be the
kernel bundle; otherwise the bundle part is defined to be any fixed choice of a finite
rank subbundle $L\to B$ in the approach by Mi\v s\v cenko--Fomenko, which is outlined
below. Thus the flat GRR theorem at the differential form level can be proved by reducing
it to the kernel bundle case. For Theorem \ref{thm 3.3.1}, however, there is a specific
''target" regardless of the existence of the kernel bundle of a suitable Dirac operator,
namely, the cohomology bundle $H(Z, F|_Z)\to B$. If we prove Theorem \ref{thm 3.3.1} under
the kernel bundle assumption, whose existence is actually unknown in reality, we would
have to pull back, for example, a unitary connection on the possibly non-existing kernel
bundle to $H(Z, F|_Z)\to B$ (see (\ref{eq 1.2.7})). This argument is certainly incorrect.
We would like to thank the referee for pointing this out.

Before we outline the solutions to these two problems, let us briefly recall that there
are (at least) two approaches to deal with the non-existence of the kernel bundle.
\begin{itemize}
  \item One approach is given by Atiyah--Singer \cite{AS71} (see also \cite[\S9.5]{BGV}).
        The idea is to find a trivial bundle $\C^N\to B$ and a linear map $s:\C^N\to
        (\pi_*E)^-$ in order to perturb the Dirac operator $\DD$ by a smoothing operator
        $R_s$ induced by $s$, so that $(\DD+R_s)_+:(\pi_*E)^+\oplus\C^N\to(\pi_*E)^-$ is
        surjective, and therefore $\ker((\DD+R_s)_-)=0$. In this case the $K$-theoretic
        analytic index of $[E]\in K(X)$ is defined to be $[\ker(\DD+R_s)]-[\C^N]$.
  \item Another approach is given by Mi\v s\v cenko--Fomenko \cite[Lemma 2.2]{MF79}. The
        idea is to find a $\Z_2$-graded finite rank subbundle $L\to B$ of $\pi_*E\to B$
        and a $\Z_2$-graded complementary subbundle $K\to B$ such that $\DD_+$ is block
        diagonal with respect to the decomposition $(\pi_*E)^\pm=K^\pm\oplus L^\pm$ and
        it restricts to an isomorphism $K^+\to K^-$. In this case the $K$-theoretic
        analytic index of $[E]\in K(X)$ is defined to be $[L^+]-[L^-]$.
\end{itemize}

The idea of solving the above problems is, roughly speaking, to consider the following
perturbed twisted spin Dirac operator
\begin{equation}\label{eq 1.2.6}
\DD^{S\wh{\otimes}(S^*\otimes\f)}+\wt{V}\in\Gamma(\wt{X}, \End^-(S(T^V\wt{X})\wh{\otimes}
(S(T^V\wt{X})^*\otimes\f))),
\end{equation}
where the details will be given in Section \ref{s 2.3}. The unitary connection
$\nabla^{\f, u}$ on $\f\to\wt{X}$ defining $\DD^{S\wh{\otimes}S^*\otimes\f}$ is chosen in
the way that its curvature satisfies a certain condition (namely, (\ref{eq 2.2.4})) only
on $(S(T^V\wt{X})^*\otimes\f)|_{\partial\wt{X}}\to\partial\wt{X}$. The reason of
perturbing and twisting the spin Dirac operator as in (\ref{eq 1.2.6}) is due to
$$S(T^VX)\wh{\otimes}S(T^VX)^*\cong\Lambda(T^VX)^*\otimes\C$$
as $\Z_2$-graded complex vector bundles and the following result by Bismut--Zhang
\cite[Proposition 4.12]{BZ92}:
$$\DD^{\Lambda\otimes F}+V=\DD^{Z, \dr},$$
where the curvature of the unitary connection $\nabla^{F, u}$ defining $\DD^{\Lambda
\otimes F}$ satisfies (\ref{eq 2.2.4}). If we assumed the existence of the kernel bundle of
the Dirac operator (\ref{eq 1.2.6}), then one could prove that
$$\ker(\DD^{S\wh{\otimes}(S^*\otimes\f)}+\wt{V})\cong\ker(\DD^{\Lambda\otimes\f}+\wt{V})$$
as $\Z_2$-graded complex vector bundles. This would imply
\begin{equation}\label{eq 1.2.7}
i_{B, k}^*\ker(\DD^{S\wh{\otimes}(S^*\otimes\f)}+\wt{V})\cong i_{B, k}^*\ker(\DD^{\Lambda
\otimes\f}+\wt{V})\cong i_{B, k}^*\ker(\DD^{\wt{Z}, \dr})
\end{equation}
as $\Z_2$-graded complex vector bundles over $B$ for $k\in\set{0, 1}$. However, our
assumption is not verified in general, so we need to adopt one of the two aforementioned
approaches.

We choose the approach given by Mi\v s\v cenko--Fomenko, and the corresponding results of
the local FIT in this approach is given by Freed--Lott \cite[\S7]{FL10}. More precisely,
we first establish a variational formula of the Bismut--Cheeger eta form without the
kernel bundle assumption (Proposition \ref{prop 3.1.1}). As a byproduct we give another
proofs of the facts that
\begin{itemize}
  \item the analytic index in differential $K$-theory defined without the kernel bundle
        assumption does not depend on the choice of the finite rank subbundle $L\to B$
        (Corollary \ref{coro 3.1.1}),
  \item if the kernel bundle exists then the two definitions of the analytic index in
        differential $K$-theory, under and without the kernel bundle assumption, coincide
        as elements in the differential $K$-group (Corollary \ref{coro 3.1.2}).
\end{itemize}

These results are first proved by Freed--Lott \cite[(3) and (4) of Corollary 7.36]{FL10} as
a consequence of the FIT in differential $K$-theory \cite[Theorem 7.35]{FL10}. Our proofs
of these results are an application of Proposition \ref{prop 3.1.1}, and do not make use
of the FIT in differential $K$-theory. Note that these results are stated in terms of spin
fibers in this paper, as opposed to \cite{FL10}, which are stated in terms of $\spin^c$
fibers. By a minor modification all the results in Section \ref{s 3.1} can be extended to
$\spin^c$ fibers case.

The idea of proving Theorem \ref{thm 3.3.1} follows closely to the proof of Proposition
\ref{prop 3.1.1}. In more detail, given a $\Z_2$-graded complex flat vector bundle $F\to X$
of virtual rank zero with $\Z_2$-graded flat connection $\nabla^F$, we take $k\in\N$ large
enough so that there exist smooth bundle isomorphisms $j:kF^+\to kF^-$ and
$h^\bullet_\pm:kH^\bullet(Z, F^\pm|_Z)\cong\C^{kn^\bullet_\pm}$ for $\bullet\in\set{\even,
\odd}$, where $n^\bullet_\pm=\rk(H^\bullet(Z, F^\pm|_Z))$. Then by putting a $\Z_2$-graded
Hermitian metric $g^F$ on $F\to X$, there exists a smooth isometric isomorphism
$\wt{j}:kF^+\to kF^-$ associated to $(k, j, g^F)$, i.e. $kg^+=\wt{j}^*kg^-$. Note that
$k\nabla^+$ and $\wt{j}^*k\nabla^-$ are two flat connections on $kF^+\to X$. By joining
$k\nabla^+$ and $\wt{j}^*k\nabla^-$ by a smooth path of connections, one can construct a
Hermitian bundle $\f\to\wt{X}$ with a unitary connection $\nabla^{\f, u}$ which pulls back
to two unitary connections on $kF^+\to X$ associated to $k\nabla^+$ and $\wt{j}^*k\nabla^-$,
respectively. By considering the perturbed twisted spin Dirac operator (\ref{eq 1.2.6})
without assuming the existence of its kernel bundle, one obtains an analog of local FIT
that is suitable for our purpose. We then integrate the so obtained equality of closed
differential forms along the fibers of the trivial fibration $\wt{B}\to B$. Since the
kernel bundle exists over $\partial\wt{B}$, the idea is to ``replace" the pullback of the
$\Z_2$-graded finite rank subbundle $\LLL\to\wt{B}$ to $\partial\wt{B}$ by the kernel
bundle. At this step we cannot directly apply Corollary \ref{coro 3.1.2}, but it provides
a guidance on what and how the geometric data should be replaced. The reason that Corollary
\ref{coro 3.1.2} is not directly applicable here is due to the perturbed twisted spin Dirac
operator (\ref{eq 1.2.6}). The definition of the analytic index in differential $K$-theory
without the kernel bundle assumption does not allow us to perturb $\DD^{S\wh{\otimes}
(S^*\otimes\f)}$ by $\wt{V}$. Thus we are forced to mimic the proof of Proposition
\ref{prop 3.1.1} when proving Theorem \ref{thm 3.3.1}. During the process of ''replacing"
the kernel bundle over $\partial\wt{B}$ the isomorphism
$$\wt{j}:(kF^+, \wt{j}^*k\nabla^-)\to(kF^-, k\nabla^-)$$
of complex flat vector bundles plays an important role. The proof of Theorem \ref{thm 3.3.1}
is completed by mimicking the proof of Proposition \ref{prop 3.1.1} adapted to the current
situation. By taking an appropriate $\Z_2$-graded complex flat vector bundle with
$\Z_2$-graded flat connection in Theorem \ref{thm 3.3.1}, and using the properties of the
Cheeger--Chern--Simons class established in Section \ref{s 3.2} and a result by Bismut
\cite[Theorem 3.12]{B05}, we deduce (\ref{eq 1.0.2}) for $\dim(Z)$ even.

We would like to emphasize that the use of differential $K$-theory in this paper is not
absolutely necessary (we could state Corollary \ref{coro 3.1.1} and Corollary
\ref{coro 3.1.2} without it). However, differential $K$-theory is a very convenient tool
to keep track of the changes of the geometric objects in local index theory when the
defining data are deformed, and it effectively shortens the presentation.

\subsection{Outline}

The paper is organized as follows. In Section 2 we review the background material,
including some aspects of the Chern character form and the Chern--Simons form, the
Cheeger--Chern--Simons class of complex flat vector bundles, the setup and the statement
of the local FIT for twisted de Rham operator, and the local FIT for twisted spin Dirac
operator under and without the kernel bundle assumption. In Section 3 we prove the main
results in this paper. In Section \ref{s 3.1} we prove a variational formula of the
Bismut--Cheeger eta form without the kernel bundle assumption in the even dimensional fiber
case. Then we show the independence of the choice of the $\Z_2$-graded finite rank subbundle
in the definition of the analytic index in differential $K$-theory without the kernel bundle
assumption, and we prove that if the kernel bundle exists then the two definitions of the
analytic index in differential $K$-theory coincide. Section \ref{s 3.2} is devoted to some
basic properties of the Cheeger--Chern--Simons class that will be used in Section \ref{s 3.3}.
In Section \ref{s 3.3} we prove the main result of the paper. First of all we prove a
$\Z_2$-graded version of (\ref{eq 1.0.2}) for $\dim(Z)$ even at the differential form level.
Then we deduce (\ref{eq 1.0.2}) for $\dim(Z)$ even.

\section*{Acknowledgement}

We would like to thank Steve Rosenberg for many helpful advice and his constant
encouragement, Sebastian Goette for generously answering many (awkward) questions in
MathOverflow, and Bo Liu for a discussion on a variational formula of the Bismut--Cheeger
eta form. Last but not least we would like to thank the referee(s) for his/her patience,
careful reading and helpful comments, correcting the mistakes and improving the presentation
of the paper significantly.

\section{Background material}

In this paper $X$ and $B$ are closed manifolds and $I$ is the closed interval $[0, 1]$.
Given a manifold $X$, define $\wt{X}=X\times I$. Given $t\in[0, 1]$, define a map
$i_{X, t}:X\to\wt{X}$ by $i_{X, t}(x)=(x, t)$. Denote by $p_X:\wt{X}\to X$ the standard
projection map. For $k\geq 0$, denote by $\Omega^k_\Q(X; \C)$ the set of all
complex-valued closed $k$-forms on $X$ with periods in $\Q$, and write $\Omega^k_\Q(X)$
for $\Omega^k_\Q(X; \R)$.

Let $E\to X$ and $F\to X$ be complex vector bundles, where $E\to X$ is $\Z_2$-graded.
Denote by $E^{\op}\to X$ the $\Z_2$-graded complex vector bundle whose $\Z_2$-grading is
the opposite of $E\to X$, i.e. $(E^{\op})^+=E^-$ and $(E^{\op})^-=E^+$. We will also use
the notation $\op$ for other $\Z_2$-graded objects. Denote by $E\otimes F\to X$ the
$\Z_2$-graded tensor product if $F\to X$ is ungraded; and by $E\wh{\otimes}F\to X$ the
$\Z_2$-graded tensor product if $F\to X$ is $\Z_2$-graded.

\subsection{Chern character form and Chern--Simons form}\label{s 2.1}

In this subsection we recall the definitions of the Chern character form and the
Chern--Simons form, and also fix the sign convention.

Let $E\to X$ be a complex vector bundle with a connection $\nabla^E$. The Chern
character form of $\nabla^E$ is defined by
$$\ch(\nabla^E)=\tr(e^{-\frac{1}{2\pi i}(\nabla^E)^2})\in\Omega^{\even}_\Q(X; \C).$$

There is a ``canonical" transgression form \dis{\CS(\nabla^E_0, \nabla^E_1)\in
\frac{\Omega^{\odd}(X; \C)}{\im(d)}} between the Chern character forms of two
connections in the sense that
\begin{equation}\label{eq 2.1.1}
d\CS(\nabla^E_0, \nabla^E_1)=\ch(\nabla^E_1)-\ch(\nabla^E_0).
\end{equation}
One of the definitions of $\CS(\nabla^E_1, \nabla^E_0)$ is given as follows. In the
following $k\in\set{0, 1}$ is fixed. Let $\e\to\wt{X}$ be a complex vector bundle
with a connection $\nabla^{\e}$. Since $p_X\circ i_{X, k}=\id_X$ and $i_{X, k}\circ
p_X\sim\id_{\wt{X}}$, it follows that $\e\cong p_X^*(i_{X, k}^*\e)$. Thus
\begin{equation}\label{eq 2.1.2}
E_0:=i_{X, 0}^*\e\cong i_{X, 0}^*p_X^*(i_{X, 0}^*\e)\cong i_{X, 1}^*p_X^*(i_{X, 0}^*\e)
\cong i_{X, 1}^*\e=:E_1.
\end{equation}
Write $E=E_0\cong E_1$. For a fixed $k\in\set{0, 1}$, define a connection $\nabla^E_k$
on $E\to X$ by
$$\nabla_k^E:=i_{X, k}^*\nabla^{\e}.$$
The Chern--Simons form $\CS(\nabla^E_0, \nabla^E_1)$ is defined to be
\begin{equation}\label{eq 2.1.3}
\CS(\nabla^E_0, \nabla^E_1)=-\int_{\wt{X}/X}\ch(\nabla^{\e})\mod\im(d),
\end{equation}
where $\wt{X}/X$ denotes the fiber of the fiber bundle $\wt{X}\to X$, and
\dis{\int_{\wt{X}/X}} denotes the integration along the fiber.

We will need some facts about integration along the fibers. Let $\pi:M\to B$ be a smooth
fiber bundle with compact fibers. By \cite[Chapter 1]{BT82}, we have
$$\int_{M/B}\pi^*\alpha\wedge\beta=\alpha\wedge\bigg(\int_{M/B}\beta\bigg)$$
for all $\alpha\in\Omega(B)$ and $\beta\in\Omega(M)$. If $M$ is a manifold with boundary
and the fiber bundle $\pi:M\to B$, whose fibers are compact and of dimension $n$,
satisfies certain orientability assumptions, then Stokes' theorem for integration along
the fibers (see, for example, \cite[(1.52)]{BM06} and \cite[Problem 4 (p.311)]{GHV}) is
given by
\begin{equation}\label{eq 2.1.4}
(-1)^{k-n+1}\int_{\partial M/B}i^*\omega=\int_{M/B}d^M\omega-d^B\int_{M/B}\omega,
\end{equation}
where $i:\partial M\to M$ is the inclusion map and $\omega\in\Omega^k(M)$.

By considering the fiber bundle $\wt{X}\to X$ and taking $\omega=\ch(\nabla^{\e})$ in
(\ref{eq 2.1.4}), we have
\begin{displaymath}
\begin{split}
d\CS(\nabla^E_0, \nabla^E_1)&=-d\int_{\wt{X}/X}\ch(\nabla^{\e})=-\int_{\wt{X}/X}d\ch
(\nabla^{\e})+\int_{\partial\wt{X}/X}i^*\ch(\nabla^{\e})\\
&=\ch(\nabla_1^E)-\ch(\nabla_0^E).
\end{split}
\end{displaymath}
Thus the Chern--Simons form defined by (\ref{eq 2.1.3}) satisfies (\ref{eq 2.1.1}).

Given two connections $\nabla^E_1$ and $\nabla^E_0$ on $E\to X$, one can take $\e=p_X^*E$
in above and define
\begin{equation}\label{eq 2.1.5}
\nabla^{\e}=\nabla^E_t+dt\wedge\frac{\partial}{\partial t},
\end{equation}
where $\nabla^E_t$ is a smooth curve of connections joining $\nabla^E_0$ and $\nabla^E_1$.
Note that $\CS(\nabla^E_0, \nabla^E_1)$ is independent of the choice of $\nabla^E_t$ (see,
for example, \cite[Proposition 1.1]{SS10} and \cite[Theorem B.5.4]{MM07}). In the case
that $E\to X$ is equipped with Hermitian metrics $g^E_0$ and $g^E_1$, and $\nabla^E_k$ is
unitary with respect to $g^E_k$ for $k\in\set{0, 1}$, one defines $\nabla^{\e}$ in
(\ref{eq 2.1.3}) as in \cite[p.373]{MM07}. We recall its construction here. Denote by
$\ov{E}^*\to X$ the antidual bundle of $E\to X$, and by $f_k:E\to\ov{E}^*$ the canonical
smooth bundle isomorphism associated to $g^E_k$. Then $f_0^{-1}\circ f_1\in\Aut(E)$ is
positive and self-adjoint with respect to $g^E_0$ on each fiber. Denote by $f$ its unique
positive self-adjoint square root. By the proof of \cite[Theorem 8.8 of Chapter 1]{K08} we
have $g^E_1=f^*g^E_0$, and therefore $(f^{-1})^*\nabla^E_1$ is unitary with respect to
$g^E_0$. Thus the smooth path of connections on $E\to X$ defined by
$$\nabla^E_t=(1-t)\nabla^E_0+t(f^{-1})^*\nabla^E_1$$
is unitary with respect to $g^E_0$. Let $f_t=(1-t)\id_E+tf$. Since $g^E_t=f_t^*g^E_0$ is a
smooth path of Hermitian metrics on $E\to X$ joining $g^E_0$ and $g^E_1$, it follows that
$f_t^*\nabla^E_t$ is a smooth path of unitary connections on $E\to X$ with respect to
$g^E_t$ for each $t\in[0, 1]$ joining $\nabla^E_0$ and $\nabla^E_1$. Define a Hermitian
metric $g^{\e}$ on $\e\to\wt{X}$ by $g^{\e}=p_X^*g^E_t$. Then the connection on $\e\to
\wt{X}$ defined by
\begin{equation}\label{eq 2.1.6}
\nabla^{\e}=f_t^*\nabla^E_t+dt\wedge\bigg(\frac{\partial}{\partial t}+\frac{1}{2}
(g^E_t)^{-1}\frac{\partial}{\partial t}g^E_t\bigg)
\end{equation}
is unitary with respect to $g^{\e}$.

Another equivalent definition of the Chern--Simons form is given by
\begin{equation}\label{eq 2.1.7}
\CS(\nabla^E_1, \nabla^E_0)=\int^1_0\tr\bigg(\frac{d\nabla^E_t}{dt}e^{-\frac{1}{2\pi i}
(\nabla^E_t)^2}\bigg)dt.
\end{equation}
The choices of 0 and 1 are immaterial. If $t<T$ are two fixed positive real numbers,
then one can replace 0 by $t$ and 1 by $T$ in above.

It follows from (\ref{eq 2.1.3}) that the Chern--Simons form satisfies the following
properties:
\begin{eqnarray}
\CS(\nabla^E_1, \nabla^E_0)&=&-\CS(\nabla^E_0, \nabla^E_1),\label{eq 2.1.8}\\
\CS(\nabla^E_1, \nabla^E_0)&=&\CS(\nabla^E_1, \nabla^E_2)+\CS(\nabla^E_2,
\nabla^E_0),\label{eq 2.1.9}\\
\CS(\nabla^E_1\oplus\nabla^F_1, \nabla^E_0\oplus\nabla^F_0)&=&\CS(\nabla^E_1,
\nabla^E_0)+\CS(\nabla^F_1, \nabla^F_0).\label{eq 2.1.10}
\end{eqnarray}

Let $E\to X$ be a $\Z_2$-graded complex vector bundle with a superconnection $\aaa$. The
Chern character form of $\aaa$ is defined by
$$\ch(\aaa)=\str(e^{-\frac{1}{2\pi i}\aaa^2})\in\Omega^{\even}_\Q(X; \C),$$
where $\str:\Gamma(X, \Lambda(T^*X)\otimes\C\otimes\End(X))\to\Omega(X; \C)$ is the
extended supertrace \cite[Section 1.5]{BGV}. If $\aaa_0$ and $\aaa_1$ are two
superconnections on $E\to X$, one can define the Chern--Simons form \dis{\CS(\aaa_0,
\aaa_1)\in\frac{\Omega^{\odd}(X; \C)}{\im(d)}} in a similar way as (\ref{eq 2.1.3}) or
equivalently (\ref{eq 2.1.7}). Chern--Simons form of superconnections shares similar
properties to Chern--Simons form of ordinary connections, namely,
\begin{eqnarray}
\CS(\aaa_0, \aaa_1)&=&-\CS(\aaa_1, \aaa_0),\label{eq 2.1.11}\\
\CS(\aaa_0, \aaa_1)&=&\CS(\aaa_0, \aaa_2)+\CS(\aaa_2, \aaa_1),\label{eq 2.1.12}\\
\CS(\aaa_0, \aaa_1)+\CS(\bbb_0, \bbb_1)&=&\CS(\aaa_0\oplus\bbb_0, \aaa_1\oplus\bbb_1).
\label{eq 2.1.13}
\end{eqnarray}
If the superconnections $\aaa_0$ and $\aaa_1$ are $\Z_2$-graded connections $\nabla_0
=\nabla_0^+\oplus\nabla_0^-$ and $\nabla_1=\nabla_1^+\oplus\nabla_1^-$, then one can
easily show that
\begin{equation}\label{eq 2.1.14}
\CS(\nabla_1, \nabla_0)=\CS(\nabla_1^+, \nabla_0^+)-\CS(\nabla_1^-, \nabla_0^-).
\end{equation}
(\ref{eq 2.1.14}) holds for $\Z_2$-graded unitary connections as well.

Now put a $\Z_2$-graded Hermitian metric $g^E$ and a $\Z_2$-graded unitary connection
$\nabla^E$ on $E\to X$. Let $s_E\in\Gamma(X, \End^-(E))$ be an odd self-adjoint section.
The quadruple $(E, g^E, \nabla^E, s_E)$ is said to split \cite[Definition 2.9]{B96} if
\begin{enumerate}
  \item $E=\ker(s_E)\oplus\im(s_E)$,
  \item $\nabla^E$ preserves $\ker(s_E)\to X$ and $\im(s_E)\to X$, and
  \item $s_E$ is a unitary odd section of $\End(\im(s_E))\to X$ preserving
        $\nabla^{\im(s_E)}$.
\end{enumerate}
\begin{exam}\label{exam 1}
An example of split quadruple $(W, g^W, \nabla^W, s_W)$ over $B$ is given by
$$W^+=W^-,\qquad g^{W^+}=g^{W^-},\qquad\nabla^{W^+}=\nabla^{W^-},\qquad s_W=
\begin{pmatrix} 0 & \id \\ \id & 0 \end{pmatrix}.$$
In this case $\ker(s_W)\to B$ is the zero $\Z_2$-graded complex vector bundle, so the
other conditions of $(W, g^W, \nabla^W, s_W)$ being split are immediately satisfied.
\end{exam}

Example \ref{exam 1} will play an important role in proving Proposition \ref{prop 3.1.1}.

Let $E\to X$ be a real vector bundle with a Euclidean metric $g^E$ and a Euclidean
connection $\nabla^E$. The $\wh{A}$-genus form of $\nabla^E$ is defined to be
$$\wh{A}(\nabla^E)=\sqrt{\det\bigg(\frac{-\frac{1}{4\pi i}R^E}{\sinh(-\frac{1}{4\pi i}
R^E)}\bigg)}\in\Omega^{4\bullet}_\Q(X),$$
where $R^E$ is the curvature of $\nabla^E$. If $\nabla^E_1$ and $\nabla^E_0$ are two
Euclidean connections on $E\to X$, one can define a transgression form \dis{\wt{\wh{A}}
(\nabla^E_1, \nabla^E_0)\in\frac{\Omega^{4\bullet-1}(X)}{\im(d)}} between $\wh{A}
(\nabla^E_0)$ and $\wh{A}(\nabla^E_1)$ by
\begin{equation}\label{eq 2.1.16}
\wt{\wh{A}}(\nabla^E_0, \nabla^E_1)=-\int_{\wt{X}/X}\wh{A}(\nabla^{\e}),
\end{equation}
where $\nabla^{\e}$ is the connection on $\e\to\wt{X}$ defined in a way similar to
(\ref{eq 2.1.6}). Similarly we have
$$d\wt{\wh{A}}(\nabla^E_0, \nabla^E_1)=\wh{A}(\nabla^E_1)-\wh{A}(\nabla^E_0).$$

\subsection{Cheeger--Chern--Simons class}\label{s 2.2}

In this subsection we review the definition of the Cheeger--Chern--Simons class of complex
flat vector bundles. We refer to \cite{BL95, B05, MZ08} for the details.

Let $F\to X$ be a complex flat vector bundle with flat connection $\nabla^F$. The
Cheeger--Chern--Simons class $\CCS(F, \nabla^F)\in H^{\odd}(X; \C/\Q)$ of $(F, \nabla^F)$
\cite[\S1(g)]{BL95} (see also \cite[Definition 2.11]{MZ08}) is defined as follows. Denote
by $\C^N\to X$ the trivial complex vector bundle of rank $N$. Since
$\ch([F]-[\C^{\rk(F)}])=0\in H^{\even}(X; \Q)$, by \cite[p.89]{APS76} there exists
$k\in\N$ such that $kF\cong k\C^{\rk(F)}$ as smooth complex vector bundles.
\begin{remark}\label{remark 1}
This fact can be proved as follows. Let $E_1\to X$ and $E_2\to X$ be complex vector
bundles of the same rank $\ell$. Recall from \cite[Theorem 1.5 of Chapter 9]{H94} that if
$2\ell\geq\dim(X)$ and $E_1\oplus\C^m\cong E_2\oplus\C^m$ for some $m\in\N$, then
$E_1\cong E_2$.

Now suppose $E_1\to X$ and $E_2\to X$ have the same rank $\ell$ and satisfy $\ch([E_1]-
[E_2])=0$. Since $\ch:K^0(X)\otimes\Q\to H^{\even}(X; \Q)$ is a ring isomorphism, there
exists $k\in\N$ such that $k[E_1]=k[E_2]$. Since it also holds for any integer multiple
of $k$, we can take a sufficiently large integer multiple of $k$, still denoted by $k$,
so that $2k\ell\geq\dim(X)$. Since $kE_1\oplus\C^m\cong kE_2\oplus\C^m$ for some $m\in\N$,
it follows from \cite[Theorem 1.5 of Chapter 9]{H94} that $kE_1\cong kE_2$.
\end{remark}
Let $\nabla^{kF}_0$ be a trivial connection on $kF\to X$, which can be determined by
choosing a global frame for $kF\to X$. One can check that the odd form \dis{\frac{1}{k}
\CS(\nabla^{kF}_0, k\nabla^F)} is closed. The Cheeger--Chern--Simons class of
$(F, \nabla^F)$ is defined to be
\begin{equation}\label{eq 2.2.1}
\CCS(F, \nabla^F):=\bigg[\frac{1}{k}\CS(\nabla^{kF}_0, k\nabla^F)\bigg]\mod\Q.
\end{equation}
Note that $\CCS(F, \nabla^F)$ is independent of the choices of $k\in\N$ such that
$kF\cong k\C^{\rk(F)}$ \cite[Theorem B.5.4]{MM07} and $\nabla^{kF}_0$ (by proceeding
as in \cite[Lemma 1]{L94}).

Put a Hermitian metric $g^F$ on $F\to X$. As in \cite[Definition 4.1]{BZ92} define
\begin{equation}\label{eq 2.2.2}
\omega(F, g^F):=(g^F)^{-1}(\nabla^Fg^F)\in\Omega^1(X, \End(F)).
\end{equation}
By \cite[Definition 4.2, Proposition 4.3]{BZ92}, the connection $\nabla^{F, u}$ on
$F\to X$ defined by
\begin{equation}\label{eq 2.2.3}
\nabla^{F, u}=\nabla^F+\frac{1}{2}\omega(F, g^F)
\end{equation}
is unitary with respect to $g^F$ and has curvature
\begin{equation}\label{eq 2.2.4}
(\nabla^{F, u})^2=-\frac{1}{4}\omega(F, g^F)^2.
\end{equation}
By \cite[(2.33), (2.37)]{B05}, $\tr(\omega(F, g^F)^{2k})=0$ for any $k\in\N$, and
therefore
\begin{equation}\label{eq 2.2.5}
\ch(\nabla^{F, u})=\rk(F).
\end{equation}
By \cite[(2.44)]{MZ08} the real part of $\CCS(F, \nabla^F)$ is given by
\begin{equation}\label{eq 2.2.6}
\re(\CCS(F, \nabla^F))=\bigg[\frac{1}{k}\CS(\nabla^{kF}_0, k\nabla^{F, u})\bigg]\mod\Q
\in H^{\odd}(X; \R/\Q).
\end{equation}
\begin{remark}\label{remark 2}
The Cheeger--Chern--Simons class $\CCS(F, \nabla^F)$ measures the deviation of $\nabla^F$
(or more precisely its $k$-fold direct sum) from being a trivial connection. If $\nabla^F$
(or $k\nabla^F$) is indeed a trivial connection, then $k\nabla^F$ differs from
$\nabla^{kF}_0$ by a gauge transformation specified by a smooth map $g:X\to\GL(k\rk(F);
\C)$. Thus $\CS(\nabla^{kF}_0, k\nabla^F)=\ch^{\odd}(g)\in\Omega^{\odd}_\Q(X; \C)$. Since
$\CCS(F, \nabla^F)$ is the mod $\Q$ reduction of the de Rham class of a rational multiple
of $\CS(\nabla^{kF}_0, k\nabla^F)$, it follows that $\CCS(F, \nabla^F)=0\in H^{\odd}
(X; \C/\Q)$.
\end{remark}

Let $(E, v)$ be a $\Z$-graded cochain complex of complex vector bundles over $X$, i.e.
\begin{equation}\label{eq 2.2.7}
\begin{CD}
0 @>>> E^0 @>v>> E^1 @>v>> \cdots @>v>> E^m @>>> 0
\end{CD}
\end{equation}
with $v\circ v=0$. Let $\nabla^k$ be a connection on $E^k\to X$ for each $0\leq k
\leq m$. Define
$$E=\bigoplus_{k=0}^mE^k\qquad\textrm{ and }\qquad\nabla^E=\bigoplus_{k=0}^m\nabla^k.$$
Note that $\nabla^E$ is a $\Z$-graded connection on $E\to X$. The triple $(E, v,
\nabla^E)$ is called a flat cochain complex if $\nabla^E$ is a flat connection on
$E\to X$ and $[\nabla^E, v]=0$. Define
\begin{equation}\label{eq 2.2.8}
\begin{split}
E^+&=\bigoplus_kE^{2k},\qquad E^-=\bigoplus_kE^{2k+1},\\
\nabla^+&=\bigoplus_k\nabla^{2k},\qquad \nabla^-=\bigoplus_k\nabla^{2k+1}.
\end{split}
\end{equation}
Then $E\to X$, where $E=E^+\oplus E^-$, is a $\Z_2$-graded complex vector bundle with a
$\Z_2$-graded connection $\nabla^E=\nabla^+\oplus\nabla^-$.

For any $\Z$-graded cochain complex $(E, v)$ with two $\Z$-graded connections
$\nabla^E_1$ and $\nabla^E_0$ define
\begin{displaymath}
\begin{split}
\ch(E, \nabla^E_j)&:=\sum_{k=0}^m(-1)^k\ch(E^k, \nabla^k_j) \textrm{ where } j=0, 1,\\
\CS(\nabla^E_1, \nabla^E_0)&:=\sum_{k=0}^m(-1)^k\CS(\nabla^k_1, \nabla^k_0).
\end{split}
\end{displaymath}
The Cheeger--Chern--Simons class of a $\Z$-graded flat cochain complex $(E, v, \nabla^E)$
is defined to be
\begin{equation}\label{eq 2.2.9}
\CCS(E, \nabla^E)=\sum_{k=0}^m(-1)^k\CCS(E^k, \nabla^k)\in H^{\odd}(X; \C/\Q).
\end{equation}

Let $F\to X$ be a $\Z_2$-graded complex flat vector bundle with $\Z_2$-graded flat
connection $\nabla^F=\nabla^+\oplus\nabla^-$ of virtual rank zero. Since $\ch(\nabla^+)
-\ch(\nabla^-)=\rk(F^+)-\rk(F^-)=0$, there exist $k\in\N$ and a smooth bundle isomorphism
$j:kF^+\to kF^-$. The Cheeger--Chern--Simons class of $(F, \nabla^F)$ is given by
\begin{equation}\label{eq 2.2.10}
\CCS(F, \nabla^F)=\bigg[\frac{1}{k}\CS(j^*k\nabla^-, k\nabla^+)\bigg]\mod\Q.
\end{equation}
Note that $\CCS(F, \nabla^F)$ is independent of the choices of $k$
\cite[Theorem B.5.4]{MM07} and $j$.
\begin{remark}\label{remark 3}
To prove that $\CCS(F, \nabla^F)$ is independent of the choice of $j$, suppose $j_1:kF^+\to
kF^-$ is another smooth bundle isomorphism. By (\ref{eq 2.1.8}) and (\ref{eq 2.1.9})
we have
\begin{displaymath}
\begin{split}
\CS(j^*k\nabla^-, k\nabla^+)-\CS(j_1^*k\nabla^-, k\nabla^+)=&\CS(j^*k\nabla^-, k\nabla^+)
+\CS(k\nabla^+, j_1^*k\nabla^-)\\
=&\CS(j^*k\nabla^-, j_1^*k\nabla^-)\\
=&\CS(k\nabla^-, (j^{-1})^*j_1^*k\nabla^-)\\
=&\CS(k\nabla^-, (j_1\circ j^{-1})^*k\nabla^-),
\end{split}
\end{displaymath}
where the third equality follows from the fact that $j$ covers the identity map $\id_X$.
Since $j_1\circ j^{-1}\in\Aut(kF^-)$, it follows that $(kF^-, j_1\circ j^{-1})$ defines
an element in $K^{-1}(X)$ \cite[p.71-p.73]{K08}. It is well known that for any $[(E, f)]
\in K^{-1}(X)$, a differential form representative of the odd Chern character
$\ch^{\odd}([(E, f)])\in H^{\odd}(X; \Q)$ is given by $\CS(\nabla^E, f^*\nabla^E)$, where
$\nabla^E$ is any connection on $E\to X$ (see \cite[p.955]{FL10} for its Hermitian analog).
Thus
$$\CS(j^*k\nabla^-, k\nabla^+)-\CS(j_1^*k\nabla^-, k\nabla^+)=\CS(k\nabla^-,
(j_1\circ j^{-1})^*k\nabla^-)\in\Omega^{\odd}_\Q(X; \C).$$
\end{remark}

To obtain the formula of $\re(\CCS(F, \nabla^F))$, put a $\Z_2$-graded Hermitian metric
$g^F=g^+\oplus g^-$ on $F\to X$, and define unitary connections $\nabla^{\pm, u}$ on
$F^\pm\to X$ with respect to $g^\pm$ by (\ref{eq 2.2.3}). Since $kg^+$ and $j^*kg^-$ are
Hermitian metrics on $kF^+\to X$, by the proof of \cite[Theorem 8.8 of Chapter 1]{K08}
there exists $f\in\Aut(kF^+)$ such that
\begin{equation}\label{eq 2.2.11}
kg^+=f^*j^*kg^-=(j\circ f)^*kg^-.
\end{equation}
Write $\wt{j}$ for $j\circ f$. Note that $\wt{j}^*k\nabla^{-, u}$ is unitary with respect
to $kg^+$. Since $\wt{j}$ covers the identity map $\id_X$, it follows from
(\ref{eq 2.1.9}) and (\ref{eq 2.1.10}) that
\begin{equation}\label{eq 2.2.12}
\begin{split}
&\CS(\wt{j}^*k\nabla^-, k\nabla^+)\\
=&\CS(\wt{j}^*k\nabla^-, \wt{j}^*k\nabla^{-, u})+\CS(\wt{j}^*k\nabla^{-, u},
k\nabla^{+, u})+\CS(k\nabla^{+, u}, k\nabla^+)\\
=&\CS(k\nabla^-, k\nabla^{-, u})+\CS(\wt{j}^*k\nabla^{-, u}, k\nabla^{+, u})+\CS
(k\nabla^{+, u}, k\nabla^+).
\end{split}
\end{equation}
Since $i\im(\CCS(F^\pm, \nabla^\pm))=\CS(\nabla^{\pm, u}, \nabla^\pm)$ by
\cite[(2.43)]{MZ08}, it follows from (\ref{eq 2.2.10}) and (\ref{eq 2.2.12}) that
\begin{equation}\label{eq 2.2.13}
\re(\CCS(F, \nabla^F))=\bigg[\frac{1}{k}\CS(\wt{j}^*k\nabla^{-, u}, k\nabla^{+, u})\bigg]
\mod\Q.
\end{equation}

It is necessary to define $\re(\CCS(F, \nabla^F))$ in terms of $\wt{j}^*k\nabla^{-, u}$
instead of $j^*k\nabla^{-, u}$. Since the connection $j^*k\nabla^{-, u}$ is not unitary
with respect to $kg^+$ in general, if it was used to define $\re(\CCS(F, \nabla^F))$ then
the Chern--Simons form on the right-hand side of (\ref{eq 2.2.13}) would not be
real-valued.

Since $\CCS(F, \nabla^F)$ is independent of the choices of $k\in\N$ and $j:kF^+\to kF^-$,
and $f$ is uniquely determined by $(k, j, g^F)$, $\re(\CCS(F, \nabla^F))$ is independent
of the choices made as well.

\subsection{Local index theory for twisted de Rham operator}\label{s 2.3}

In this subsection we recall the setup and the statement of the local FIT for twisted
de Rham operator \cite[\S3]{B05} (see also \cite{BL95, MZ08}).

Let $\pi:X\to B$ be a submersion with closed fibers $Z$ of dimension $n$. Denote by
$T^VX\to X$ the vertical tangent bundle. Let $T^HX\to X$ be a horizontal distribution
for $\pi:X\to B$, i.e. $TX=T^VX\oplus T^HX$. Denote by $P^{T^VX}:TX\to T^VX$ the
projection map. Put a metric $g^{T^VX}$ on $T^VX\to X$. Given a Riemannian metric
$g^{TB}$ on $TB\to B$, define a metric on $TX\to X$ by
$$g^{TX}:=g^{T^VX}\oplus\pi^*g^{TB}.$$
If $\nabla^{TX}$ is the corresponding Levi-Civita connection on $TX\to X$, then
$\nabla^{T^VX}:=P^{T^VX}\nabla^{TX}$ is an Euclidean connection on $T^VX\to X$ with
respect to $g^{T^VX}$.

The exterior bundle $\Lambda(T^VX)^*\to X$ is a Clifford module with Clifford
multiplication $c(Y)=\varepsilon(Y)-i(Y)$, where $Y\in\Gamma(X, T^VX)$, $\varepsilon$ is
the exterior multiplication and $i$ is the interior multiplication. Here $T^VX\to X$ is
identified with $(T^VX)^*\to X$ via $g^{T^VX}$. Denote by $\nabla^{\Lambda(T^VX)^*}$ the
extension of $\nabla^{T^VX}$ on $T^VX\to X$ to $\Lambda(T^VX)^*\to X$. Set
$$\wh{c}(Y)=\varepsilon(Y)+i(Y).$$

Let $F\to X$ be a complex flat vector bundle with flat connection $\nabla^F$. Put a
Hermitian metric $g^F$ on $F\to X$. Define a twisted Dirac operator $\DD^{\Lambda
\otimes F}:\Gamma(X, \Lambda(T^VX)^*\otimes F)\to\Gamma(X, \Lambda(T^VX)^*\otimes F)$
by
\begin{equation}\label{eq 2.3.1}
\DD^{\Lambda\otimes F}=\sum_{k=1}^nc(e_k)\nabla_{e_k}^{\Lambda(T^VX)^*\otimes F, u},
\end{equation}
where $\nabla^{\Lambda(T^VX)^*\otimes F, u}$ is the tensor product of
$\nabla^{\Lambda(T^VX)^*}$ and $\nabla^{F, u}$, and $\set{e_k}$ is a local orthonormal
frame for $T^VX\to X$. Define an infinite rank $\Z$-graded complex vector bundle
$\pi^\Lambda_*F\to B$ whose fiber over $b\in B$ is
\begin{equation}\label{eq 2.3.2}
(\pi^\Lambda_*F)_b:=\Gamma(Z_b, (\Lambda(T^VX)^*\otimes F)|_{Z_b}).
\end{equation}
By \cite[(3.6)]{BL95} we have
$$\Omega(X, F)\cong\Omega(B, \pi^\Lambda_*F).$$
Denote by $\ast$ the fiberwise Hodge star operator associated to $g^{T^VX}$, and extend
it from $\Gamma(X, \Lambda(T^VX)^*)$ to $\Gamma(X, \Lambda(T^VX)^*\otimes F)\cong\Gamma
(B, \pi^\Lambda_*F)$. Define an $L^2$-metric on $\pi^\Lambda_*F\to B$ by
$$g^{\pi^\Lambda_*F}(s_1, s_2)(b)=\int_{Z_b}g^F(s_1(b)\wedge\ast s_2(b)).$$

Let $U\in\Gamma(B, TB)$ and denote by $U^H\in\Gamma(X, T^HX)$ its lift. Define a
connection $\nabla^{\pi^\Lambda_*F}$ on $\pi_*^\Lambda F\to B$ by
$$\nabla^{\pi^\Lambda_*F}_Us=\nabla^{\Lambda(T^VX)^*\otimes F, u}_{U^H}s,$$
where $s\in\Gamma(B, \pi^\Lambda_*F)$. Note that $\nabla^{\pi^\Lambda_*F}$ preserves
the $\Z$-grading of $\pi^\Lambda_*F\to B$. Denote by $\nabla^{TB}$ the Levi-Civita
connection associated to $g^{TB}$, and define $^0\nabla^{TX}=\pi^*\nabla^{TB}\oplus
\nabla^{T^VX}$. Define $S:=\nabla^{TX}-^0\nabla^{TX}\in\Omega^1(X, \End(TX))$. By
\cite[Theorem 1.9]{B86}, the $(3, 0)$ tensor $g^{TX}(S(\cdot)\cdot, \cdot)$ only depends
on $(T^HX, g^{T^VX})$. Define a horizontal one-form $k$ on $X$ by
\begin{equation}\label{eq 2.3.3}
k(U^H)=-\sum_{k=1}^ng^{TX}(S(e_k)e_k, U^H).
\end{equation}
The connection $\nabla^{\pi^\Lambda_*F, u}$ on $\pi^\Lambda_*F\to B$ defined by
\begin{equation}\label{eq 2.3.4}
\nabla^{\pi^\Lambda_*F, u}:=\nabla^{\pi^\Lambda_*F}+\frac{1}{2}k
\end{equation}
is unitary with respect to $g^{\pi^\Lambda_*F}$ \cite[Proposition 1.4]{BF86a}.

Denote by $d^Z$ the fiberwise de Rham operator coupled with $\nabla^F$ acting on
$\pi^\Lambda_*F\to B$. The connection $\wt{\nabla}^{\pi^\Lambda_*F}$
\cite[Definition 3.2]{BL95} on $\pi^\Lambda_*F\to B$ defined by
$$\wt{\nabla}^{\pi^\Lambda_*F}_Us:=\LLL_{U^H}s,$$
where $U\in\Gamma(B, TB)$, $U^H\in\Gamma(X, T^HX)$ is its lift, $\LLL_{U^H}:\Gamma(X,
\Lambda(T^VX)^*\otimes F)\to\Gamma(X, \Lambda(T^VX)^*\otimes F)$ is the Lie derivative
and $s\in\Gamma(B, \pi^\Lambda_*F)$, is $\Z$-graded.

The exterior differential $d^X:\Omega(X, F)\to\Omega(X, F)$ coupled with $\nabla^F$ can
be regarded as a flat superconnection on $\pi^\Lambda_*F\to B$ of total degree 1 whose
decomposition \cite[Proposition 3.4]{BL95} is given by
$$d^X=d^Z+\wt{\nabla}^{\pi^\Lambda_*F}+i_T,$$
where $T$ is the curvature 2-form of the fiber bundle $X\to B$, and $i_T$ is given in
\cite[Definition 3.3]{BL95}.

Consider $d^Z$ as an element in $\Gamma(B, \ho((\pi^\Lambda_*F)^\bullet,
(\pi^\Lambda_*F)^{\bullet+1}))$. For each $b\in B$,
\cdd{0 @>>> (\pi^\Lambda_*F)_b^0 @>d^{Z_b}>> (\pi^\Lambda_*F)_b^1 @>d^{Z_b}>>
\cdots @>d^{Z_b}>> (\pi^\Lambda_*F)_b^n @>>> 0}
is a cochain complex. Denote by $H^k(Z_b, F|_{Z_b})$ the associated $k$-th cohomology
group and define \dis{H(Z_b, F|_{Z_b}):=\bigoplus_{k=0}^nH^k(Z_b, F|_{Z_b})}. Define a
$\Z$-graded complex vector bundle $H(Z, F|_Z)\to B$ whose fiber over $b\in B$ is given
by $H(Z, F|_Z)_b:=H(Z_b, F|_{Z_b})$. Denote by $\psi:\ker(d^Z)\to H(Z, F|_Z)$ the
quotient map. For $s\in\Gamma(B, H^k(Z, F|_Z))$, let $e\in\Gamma(B, (\pi^\Lambda_*F)^k
\cap\ker(d^Z))$ be such that $\psi(e)=s$. The connection $\nabla^{H(Z, F|_Z)}$ on
$H(Z, F|_Z)\to B$ \cite[Definition 2.4]{BL95} defined by
$$\nabla^{H(Z, F|_Z)}_Us=\psi(\wt{\nabla}^{\pi^\Lambda_*F}_Ue),$$
where $U\in\Gamma(B, TB)$, is a well defined $\Z$-graded flat connection
\cite[Definition 2.4, Proposition 2.5]{BL95}.

Denote by $d^{Z\ast}$ the formal adjoint of $d^Z$ with respect to $g^{\pi_*^{\Lambda}F}$.
As in \cite[Definition 3.8]{BL95} define
\begin{displaymath}
\begin{split}
\DD^{Z, \dr}&=d^Z+d^{Z\ast},\\
\wt{\nabla}^{\pi^\Lambda_*F, u}&=\frac{1}{2}(\wt{\nabla}^{\pi^\Lambda_*F}+
(\wt{\nabla}^{\pi^\Lambda_*F})^*),\\
\omega(\pi^\Lambda_*F, g^{\pi^\Lambda_*F})&=(\wt{\nabla}^{\pi^\Lambda_*F})^*-
\wt{\nabla}^{\pi^\Lambda_*F},
\end{split}
\end{displaymath}
where $(\wt{\nabla}^{\pi^\Lambda_*F})^*$ is the adjoint of $\wt{\nabla}^{\pi^\Lambda_*F}$
with respect to $g^{\pi^\Lambda_*F}$ \cite[Definition 1.6]{BL95}. By Hodge theory we have
$$H(Z_b, F|_{Z_b})\cong\ker(\DD^{Z_b, \dr}).$$
Define a $\Z$-graded complex vector bundle $\ker(\DD^{Z, \dr})\to B$ whose fiber over
$b\in B$ is given by $\ker(\DD^{Z, \dr})_b:=\ker(\DD^{Z_b, \dr})$. Then $\ker(\DD^{Z, \dr})
\to B$ is a finite rank subbundle of $\pi^\Lambda_*F\to B$ and
\begin{equation}\label{eq 2.3.5}
H(Z, F|_Z)\cong\ker(\DD^{Z, \dr})
\end{equation}
as $\Z$-graded complex vector bundles. Note that $\ker(\DD^{Z, \dr})\to B$ inherits a
$\Z$-graded Hermitian metric from $g^{\pi^\Lambda_*F}$, which will be denoted by
$g^{\ker(\DD^{Z, \dr})}$. Denote by $g^{H(Z, F|_Z)}$ the $\Z$-graded Hermitian metric on
$H(Z, F|_Z)\to B$ obtained by pulling back $g^{\ker(\DD^{Z, \dr})}$ via the isomorphism
(\ref{eq 2.3.5}). Denote by $P^{\ker(\DD^{Z, \dr})}:\pi^\Lambda_*F\to\ker(\DD^{Z, \dr})$
the orthogonal projection onto $\ker(\DD^{Z, \dr})$. Then
$P^{\ker(\DD^{Z, \dr})}\wt{\nabla}^{\pi^\Lambda_*F}$ is a connection on $\ker(\DD^{Z, \dr})
\to B$, which can be considered as a connection on $H(Z, F|_Z)\to B$ via the isomorphism
(\ref{eq 2.3.5}). By \cite[Proposition 3.14]{BL95} we have
\begin{displaymath}
\begin{split}
\nabla^{H(Z, F|_Z)}&=P^{\ker(\DD^{Z, \dr})}\wt{\nabla}^{\pi^\Lambda_*F},\\
\omega(H(Z, F|_Z), g^{H(Z, F|_Z)})&=P^{\ker(\DD^{Z, \dr})}\omega(\pi^\Lambda_*F,
g^{\pi^\Lambda_*F})P^{\ker(\DD^{Z, \dr})}.
\end{split}
\end{displaymath}
Define \dis{\nabla^{H(Z, F|_Z), u}:=\nabla^{H(Z, F|_Z)}+\frac{1}{2}\omega(H(Z, F|_Z),
g^{H(Z, F|_Z)})}. Consequently we have
$$\nabla^{H(Z, F|_Z), u}=P^{\ker(\DD^{Z, \dr})}\wt{\nabla}^{\pi^\Lambda_*F, u}.$$
By \cite[(3.38)]{BL95} we have
\begin{equation}\label{eq 2.3.6}
\wt{\nabla}^{\pi^\Lambda_*F, u}=\nabla^{\pi^\Lambda_*F, u},
\end{equation}
where the left-hand side of (\ref{eq 2.3.6}) is defined by (\ref{eq 2.2.3}), and the
right-hand side of (\ref{eq 2.3.6}) is defined by (\ref{eq 2.3.4}). Therefore we have
\begin{equation}\label{eq 2.3.7}
\nabla^{H(Z, F|_Z), u}=P^{\ker(\DD^{Z, \dr})}\nabla^{\pi^\Lambda_*F, u}.
\end{equation}

By \cite[Proposition 4.12]{BZ92} we have
\begin{equation}\label{eq 2.3.8}
\DD^{Z, \dr}=\DD^{\Lambda\otimes F}+V,
\end{equation}
where
\begin{equation}\label{eq 2.3.9}
V=-\frac{1}{2}\sum_{k=1}^n\wh{c}(e_k)\omega(F, g^F)(e_k).
\end{equation}
Note that $V$ is an odd self-adjoint matrix-valued operator which anti-commutes with the
$c(X)$'s.

Define the Bismut superconnection $\bbb^{\dr}$ on $\pi^\Lambda_*F\to B$ \cite[(3.49)]{B05}
associated to $\DD^{Z, \dr}$ by
\begin{equation}\label{eq 2.3.10}
\bbb^{\dr}:=\DD^{Z, \dr}+\nabla^{\pi^\Lambda_*F, u}-\frac{c(T)}{4}.
\end{equation}
The rescaled Bismut superconnection $\bbb^{\dr}_t$ is given by
$$\bbb^{\dr}_t:=\sqrt{t}\DD^{Z, \dr}+\nabla^{\pi^\Lambda_*F, u}-\frac{c(T)}{4\sqrt{t}}.$$

One might consider the ``unperturbed" Bismut superconnection $\bbb$ associated to
$\DD^{\Lambda\otimes F}$, i.e. \dis{\bbb=\DD^{\Lambda\otimes F}+
\nabla^{\pi_*^\Lambda F, u}-\frac{c(T)}{4}}, instead of $\bbb^{\dr}$ defined by
(\ref{eq 2.3.10}) whose degree zero term $\bbb^{\dr}_{[0]}$ is perturbed by $V$. The reason
of considering $\bbb^{\dr}$ is because our target is the cohomology bundle $H(Z, F|_Z)
\to B$, which is isomorphic to $\ker(\DD^{\Lambda\otimes F}+V)\to B$. Here the notion of
superconnection is generalized in the sense of \cite[p.286]{BGV}. By
\cite[(3.40), (3.45), (3.46), (3.50)]{BL95} and the proof of \cite[Theorem 3.15]{BL95},
techniques of local index theory can still be applied to $\bbb^{\dr}$. See also the last
paragraph of \cite[p.33]{B05}.

The Bismut--Cheeger eta form $\wt{\eta}^{\dr}$ associated to $\bbb^{\dr}$ is defined by
$$\wt{\eta}^{\dr}:=\int^\infty_0\str\bigg(\frac{d\bbb^{\dr}_t}{dt}e^{-\frac{1}{2\pi i}
(\bbb^{\dr}_t)^2}\bigg)dt.$$

For $\dim(Z)$ even, the local FIT for the twisted de Rham operator $\DD^{Z, \dr}$
\cite[Theorem 3.15]{BL95} (see also \cite[(3.50)]{B05}) is given by
\begin{equation}\label{eq 2.3.11}
d\wt{\eta}^{\dr}=\int_{X/B}e(\nabla^{T^VX})\wedge\ch(\nabla^{F, u})-\ch
(\nabla^{H(Z, F|_Z), u}),
\end{equation}
where $e(\nabla^{T^VX})\in\Omega^n_\Z(X; o(T^VX))$ is the $o(T^VX)$-valued Euler form.
Here $o(T^VX)\to X$ is the orientation bundle of $T^VX\to X$. It is a flat real line
bundle, which is trivial if and only if $T^VX\to X$ is oriented. Note that
$e(\nabla^{T^VX})=0$ if $n$ is odd. \emph{A priori} (\ref{eq 2.3.11}) holds under the
assumptions that the fibers $Z$ are oriented and spin. However, as noted in the proof
of \cite[Theorem 3.15]{BL95}, the computations involved are local, so (\ref{eq 2.3.11})
is still valid if the fibers are not spin and not even orientable.

By (\ref{eq 2.2.5}) we have
\begin{displaymath}
\begin{split}
&\int_{X/B}e(\nabla^{T^VX})\wedge\ch(\nabla^{F, u})-\ch(\nabla^{H(Z, F|_Z), u})\\
=~&\rk(F)\chi(Z)-\rk(H(Z, F|_Z))=0,
\end{split}
\end{displaymath}
where $\chi(Z):B\to\Z$ is the Euler characteristic function. Thus $\wt{\eta}^{\dr}$ is
a closed form. A stronger result by Bismut \cite[Theorem 3.7]{B05} (see also
\cite[(3.74)]{BL95}) states that
\begin{equation}\label{eq 2.3.12}
\wt{\eta}^{\dr}=0.
\end{equation}

\subsection{Local index theory for twisted spin Dirac operator}\label{s 2.4}

In this subsection we review the setup and the statement of the local FIT for twisted spin
Dirac operator under and without the kernel bundle assumption. We refer to \cite[\S5]{L94}
and \cite[\S7]{FL10} for the details.

Let $\pi:X\to B$ be a submersion with closed, oriented and spin fibers $Z$ of even
dimension $n$. Endow $T^VX\to X$ the geometric data as in Section \ref{s 2.3}. Denote by
$S(T^VX)\to X$ the $\Z_2$-graded spinor bundle. The connection $\nabla^{T^VX}$ on $T^VX
\to X$ lifts uniquely to $S(T^VX)\to X$ and preserves its grading.

Let $E\to X$ be a complex vector bundle with a Hermitian metric $g^E$ and a unitary
connection $\nabla^E$. Then $S(T^VX)\otimes E\to X$ is a $\Z_2$-graded complex vector
bundle equipped with the Hermitian metric $g^{T^VX}\otimes g^E$ and the unitary connection
$\nabla^{S(T^VX)\otimes E}$, which is defined to be the tensor product of
$\nabla^{S(T^VX)}$ and $\nabla^E$. The twisted spin Dirac operator $\DD^{S\otimes E}$
acting on $\Gamma(X, S(T^VX)\otimes E)$ is defined to be
\begin{equation}\label{eq 2.4.0}
\DD^{S\otimes E}=\sum_{k=1}^nc(e_k)\nabla^{S(T^VX)\otimes E}_{e_k},
\end{equation}
where $c$ is the Clifford multiplication, and $\set{e_k}$ is a local orthonormal frame
for $T^VX\to X$.

Define an infinite rank $\Z_2$-graded complex vector bundle $\pi^{\spin}_*E\to B$ whose
fibers over $b\in B$ is given by
\begin{equation}\label{eq 2.4.1}
(\pi^{\spin}_*E)_b:=\Gamma(Z_b, (S(T^VX)\otimes E)|_{Z_b}).
\end{equation}
Denote by $g^{\pi^{\spin}_*E}$ the $L^2$-metric on $\pi^{\spin}_*E\to B$ \cite[(1.11)]{B05}.
Define a connection $\nabla^{\pi^{\spin}_*E}$ on $\pi^{\spin}_*E\to B$ by
$$\nabla^{\pi^{\spin}_*E}_Us:=\nabla^{S(T^VX)\otimes E}_{U^H}s,$$
where $s\in\Gamma(B, \pi^{\spin}_*E)$, $U\in\Gamma(B, TB)$ and $U^H\in\Gamma(X, T^HX)$
is a lift of $U$. The connection $\nabla^{\pi^{\spin}_*E, u}$ on $\pi^{\spin}_*E\to B$
defined by
\begin{equation}\label{eq 2.4.2}
\nabla^{\pi^{\spin}_*E, u}:=\nabla^{\pi^{\spin}_*E}+\frac{1}{2}k,
\end{equation}
where $k$ is given by (\ref{eq 2.3.3}), is $\Z_2$-graded and unitary with respect to
$g^{\pi^{\spin}_*E}$ \cite[Proposition 1.4]{BF86a}.

\begin{ass}\label{ass 1}
The family of complex vector spaces $\ker(\DD^{S\otimes E}_b)$, $b\in B$, has locally
constant dimension.
\end{ass}

If Assumption \ref{ass 1} is satisfied, then the family of complex vector spaces
$\ker(\DD^{S\otimes E}_b)$ form a finite rank $\Z_2$-graded subbundle of $\pi^{\spin}_*E
\to B$, denoted by $\ker(\DD^{S\otimes E})\to B$. In this case the family of complex
vector spaces $\im(\DD^{S\otimes E}_b)$ form an infinite rank $\Z_2$-graded subbundle of
$\pi^{\spin}_*E\to B$ as well, denoted by $\im(\DD^{S\otimes E})\to B$. Their
$\Z_2$-gradings are given by
$$\ker(\DD^{S\otimes E})^\pm=\ker(\DD^{S\otimes E}_\pm),\qquad
\im(\DD^{S\otimes E})^\pm=\im(\DD^{S\otimes E}_\mp).$$
Note that the direct sum decompositions
\begin{displaymath}
\begin{split}
(\pi^{\spin}_*E)^+&=\im(\DD^{S\otimes E})^+\oplus\ker(\DD^{S\otimes E})^+,\\
(\pi^{\spin}_*E)^-&=\im(\DD^{S\otimes E})^-\oplus\ker(\DD^{S\otimes E})^-
\end{split}
\end{displaymath}
are orthogonal. The $K$-theoretic analytic index of $[E]\in K(X)$ can be defined as
$\ind^{\an}([E])=[\ker(\DD^{S\otimes E})^+]-[\ker(\DD^{S\otimes E})^-]$.

The Bismut superconnection $\bbb^E$ on $\pi^{\spin}_*E\to B$ is defined to be
\begin{equation}
\bbb^E=\DD^{S\otimes E}+\nabla^{\pi^{\spin}_*E, u}-\frac{c(T)}{4},
\end{equation}
where $T$ is the curvature 2-form of the fiber bundle $\pi:X\to B$. The rescaled
Bismut superconnection $\bbb^E_t$ is given by
$$\bbb^E_t=\sqrt{t}\DD^{S\otimes E}+\nabla^{\pi^{\spin}_*E, u}-\frac{c(T)}{4\sqrt{t}}.$$

Denote by $P^{\ker(\DD^{S\otimes E})}:\pi^{\spin}_*E\to\ker(\DD^{S\otimes E})$ the
orthogonal projection. Then $g^{\ker(\DD^{S\otimes E})}:=P^{\ker(\DD^{S\otimes E})}
g^{\pi^{\spin}_*E}$ is a Hermitian metric on $\ker(\DD^{S\otimes E})\to B$. Moreover,
$\nabla^{\ker(\DD^{S\otimes E})}:=P^{\ker(\DD^{S\otimes E})}\nabla^{\pi^{\spin}_*E, u}$
is a $\Z_2$-graded unitary connection on $\ker(\DD^{S\otimes E})\to B$. Then
\begin{eqnarray}
\lim_{t\to 0}\ch(\bbb^E_t)&=&\int_{X/B}\wh{A}(\nabla^{T^VX})\wedge\ch(\nabla^E),
\label{eq 2.4.4}\\
\lim_{t\to\infty}\ch(\bbb^E_t)&=&\ch(\nabla^{\ker(\DD^{S\otimes E})})
\label{eq 2.4.5}.
\end{eqnarray}
The Bismut--Cheeger eta form \cite{BC89, D91} associated to $\bbb^E$ is defined to be
\begin{equation}
\wt{\eta}^E(g^E, \nabla^E, T^HX, g^{T^VX}):=\int^\infty_0\str\bigg(\frac{d\bbb^E_t}
{dt}e^{-\frac{1}{2\pi i}(\bbb^E_t)^2}\bigg)dt.
\end{equation}
The notation for the Bismut--Cheeger eta form, inspired by Liu \cite{L17}, is to
emphasize the dependence on the geometric data involved.

The local FIT for $\DD^{S\otimes E}$ states that
\begin{equation}
d\wt{\eta}^E(g^E, \nabla^E, T^HX, g^{T^VX})=\int_{X/B}\wh{A}(\nabla^{T^VX})\wedge
\ch(\nabla^E)-\ch(\nabla^{\ker(\DD^{S\otimes E})}).
\end{equation}

Now suppose Assumption \ref{ass 1} is not satisfied. Recall that $B$ is assumed to be
closed. Mi\v s\v cenko--Fomenko \cite{MF79} (see also \cite[Lemma 7.13]{FL10}) prove
that there exist finite rank subbundles $L^\pm\to B$ and complementary closed subbundles
$K^\pm\to B$ of $(\pi^{\spin}_*E)^\pm\to B$ such that
\begin{equation}\label{eq 2.4.8}
(\pi^{\spin}_*E)^+=K^+\oplus L^+,\qquad(\pi^{\spin}_*E)^-=K^-\oplus L^-,
\end{equation}
$\DD^{S\otimes E}_+:(\pi^{\spin}_*E)^+\to(\pi^{\spin}_*E)^-$ is block diagonal as a map
with respect to the direct sum decomposition (\ref{eq 2.4.8}), and $\DD^{S\otimes E}_+
|_{K^+}:K^+\to K^-$ is a smooth bundle isomorphism. Note that the subbundle $K^+\to B$
is not necessarily orthogonal to $L^+\to B$, and the same is true for $K^-\to B$ and
$L^-\to B$.

Given $L^\pm\to B$ satisfying the above conditions, we call the $\Z_2$-graded complex
vector bundle $L\to B$, defined by $L=L^+\oplus L^-$, satisfies the MF property with
respect to $\DD^{S\otimes E}$. If $L\to B$ is a $\Z_2$-graded complex vector bundle
satisfying the MF property with respect to $\DD^{S\otimes E}$, then the $K$-theoretic
analytic index of $[E]\in K(X)$ is defined to be $\ind^{\an}([E])=[L^+]-[L^-]\in K(B)$.
It is proved in \cite[p.96-97]{MF79} that the definition of $\ind^{\an}([E])$ does not
depend on the choice of $L\to B$ satisfying the MF property with respect to
$\DD^{S\otimes E}$.

Given a $\Z_2$-graded complex vector bundle $L\to B$ satisfying the MF property with
respect to $\DD^{S\otimes E}$, define an infinite rank bundle $\wt{\pi^{\spin}_*}E\to B$
by
\begin{displaymath}
\begin{split}
(\wt{\pi^{\spin}_*}E)^+:&=(\pi^{\spin}_*E)^+\oplus L^-,\\
(\wt{\pi^{\spin}_*}E)^-:&=(\pi^{\spin}_*E)^-\oplus L^+.
\end{split}
\end{displaymath}
That is, $\wt{\pi^{\spin}_*}E:=\pi^{\spin}_*E\oplus L^{\op}$ as $\Z_2$-graded complex
vector bundles. Let $i^-:L^-\to(\pi^{\spin}_*E)^-$ be the inclusion map and
$p^+:(\pi^{\spin}_*E)^+\to L^+$ the projection map with respect to (\ref{eq 2.4.8}). For
a given $\alpha\in\C$, define a map $\wt{\DD}^{S\otimes E}_+(\alpha):(\wt{\pi^{\spin}_*}
E)^+\to(\wt{\pi^{\spin}_*}E)^-$ by
$$\wt{\DD}^{S\otimes E}_+(\alpha)=\begin{pmatrix} \DD^{S\otimes E}_+ & \alpha i^- \\
\alpha p^+ & 0 \end{pmatrix}.$$
By \cite[Lemma 7.20]{FL10} $\wt{\DD}^{S\otimes E}_+(\alpha)$ is invertible for all
$\alpha\neq 0$. Define a map $\wt{\DD}^{S\otimes E}(\alpha):\wt{\pi^{\spin}_*}E\to
\wt{\pi^{\spin}_*}E$ by
$$\wt{\DD}^{S\otimes E}(\alpha):=\begin{pmatrix} 0 & (\wt{\DD}^{S\otimes E}_+
(\alpha))^* \\ \wt{\DD}^{S\otimes E}_+(\alpha) & 0 \end{pmatrix}.$$
We refer to \cite[p.943]{FL10} for the properties of $\wt{\DD}^{S\otimes E}(\alpha)$.

The $\Z_2$-graded $L^2$-metric $g^{\pi^{\spin}_*E}$ and the $\Z_2$-graded unitary
connection $\nabla^{\pi^{\spin}_*E, u}$ on $\pi^{\spin}_*E\to B$ project to a
$\Z_2$-graded Hermitian metric $g^L$ and a $\Z_2$-graded unitary connection
$\nabla^L$ on $L\to X$ with respect to (\ref{eq 2.4.8}) respectively. Define a
unitary connection $\nabla^{\wt{\pi_*^{\spin}}E, u}$ on $\wt{\pi_*^{\spin}}E\to B$
by
\begin{displaymath}
\begin{split}
(\nabla^{\wt{\pi_*^{\spin}}E, u})^+&:=(\nabla^{\pi^{\spin}_*E, u})^+\oplus
\nabla^{L^-},\\
(\nabla^{\wt{\pi_*^{\spin}}E, u})^-&:=(\nabla^{\pi^{\spin}_*E, u})^-\oplus
\nabla^{L^+}.
\end{split}
\end{displaymath}
That is, $\nabla^{\wt{\pi_*^{\spin}}E, u}:=\nabla^{\pi^{\spin}_*E, u}\oplus
\nabla^{L, \op}$ as $\Z_2$-graded unitary connections. Note that
$\nabla^{\wt{\pi_*^{\spin}}E, u}$ depends on the choice of $L\to B$ satisfying the MF
property with respect to $\DD^{S\otimes E}$. However, for the clarity of the notation
we will only emphasize the dependence when necessary.

Choose and fix $a\in(0, 1)$ and let $\alpha:[0, \infty)\to[0, 1]$ be a smooth
function such that $\alpha(t)=0$ for all $t\leq a$ and $\alpha(t)=1$ for all
$t\geq 1$. The choice of $a$ is actually immaterial. Define the Bismut
superconnection $\wh{\bbb}^E$ on $\wt{\pi_*^{\spin}}E\to B$ by
\begin{equation}\label{eq 2.4.9}
\wh{\bbb}^E=\wt{\DD}^{S\otimes E}(1)+\nabla^{\wt{\pi_*^{\spin}}E, u}-
\wh{\bbb}^E_{[2]},
\end{equation}
where \dis{\wh{\bbb}^E_{[2]}\in\Omega^2(B, \End^-(\wt{\pi_*^{\spin}}E))} acts on
$\Omega(B, \pi^{\spin}_*E)$ by \dis{\frac{c(T)}{4}}, and acts on $\Omega(B, L^{\op})$
by zero. For convenience we still use the notation \dis{\frac{c(T)}{4}} for
$\wh{\bbb}^E_{[2]}$. The rescaled Bismut superconnection $\wh{\bbb}^E_t$ is given by
$$\wh{\bbb}^E_t=\sqrt{t}\wt{\DD}^{S\otimes E}(\alpha(t))+\nabla^{\wt{\pi_*^{\spin}}
E, u}-\frac{c(T)}{4\sqrt{t}}.$$
Since $\wt{\DD}^{S\otimes E}(\alpha(t))$ is invertible for $t$ sufficiently large
(more precisely, for $t\geq 1$), we have \cite[(7.24)]{FL10}:
\begin{equation}\label{eq 2.4.10}
\lim_{t\to\infty}\ch(\wh{\bbb}^E_t)=0.
\end{equation}
\begin{remark}\label{remark 4}
As noted in \cite[p.943]{FL10}, when $t\to 0$ (more precisely, for $t\leq a$) the
rescaled Bismut superconnection $\wh{\bbb}^E_t$ decouples, i.e.
$$\wh{\bbb}^E_t=\bigg(\sqrt{t}\DD^{S\otimes E}+\nabla^{\pi^{\spin}_*E, u}-\frac{c(T)}
{4\sqrt{t}}\bigg)\oplus\nabla^{L, \op}=\bbb^E_t\oplus\nabla^{L, \op}.$$
\end{remark}
It follows from Remark \ref{remark 4} that we have \cite[(7.23)]{FL10}:
\begin{equation}\label{eq 2.4.11}
\lim_{t\to 0}\ch(\wh{\bbb}^E_t)=\lim_{t\to 0}\ch(\bbb^E_t)-\ch(\nabla^L)=\int_{X/B}
\wh{A}(\nabla^{T^VX})\wedge\ch(\nabla^E)-\ch(\nabla^L).
\end{equation}
The Bismut--Cheeger eta form associated to $\wh{\bbb}^E$ \cite[(7.25)]{FL10} is defined to
be
$$\wh{\eta}^E(g^E, \nabla^E, T^HX, g^{T^VX}, L)=\int^\infty_0\str\bigg(\frac{d\wh{\bbb}^E_t}
{dt}e^{-\frac{1}{2\pi i}(\wh{\bbb}^E_t)^2}\bigg)dt.$$
Note that $\wh{\eta}^E(g^E, \nabla^E, T^HX, g^{T^VX}, L)$ does not depend on the choice of
$\alpha$.

The local FIT for $\DD^{S\otimes E}$ without Assumption \ref{ass 1} is given by the local
FIT for $\wt{\DD}^{S\otimes E}(1)$ \cite[(7.26)]{FL10}, i.e.
\begin{equation}\label{eq 2.4.12}
d\wh{\eta}^E(g^E, \nabla^E, T^HX, g^{T^VX}, L)=\int_{X/B}\wh{A}(\nabla^{T^VX})\wedge
\ch(\nabla^E)-\ch(\nabla^L).
\end{equation}
\begin{remark}\label{remark 5}
The purpose of putting the projected $\Z_2$-graded unitary connection $\nabla^L$ on $L
\to B$ \cite[p.943]{FL10} is to obtain the local FIT for $\wt{\DD}^{S\otimes E}(1)$.
However, for our purpose we will choose other $\Z_2$-graded unitary connection on $L\to B$.
More precisely, let $\wt{\nabla}^L$ be a $\Z_2$-graded unitary connection on $L\to B$, not
necessarily projected from $\nabla^{\pi^{\spin}_*E, u}$. Then the rescaled Bismut
superconnection
$$\wh{\bbb}^E_t:=\sqrt{t}\wt{\DD}^{S\otimes E}(\alpha(t))+(\nabla^{\pi^{\spin}_*E, u}
\oplus\wt{\nabla}^{L, \op})-\frac{c(T)}{4\sqrt{t}}$$
still satisfies the analogs of (\ref{eq 2.4.10}) and (\ref{eq 2.4.11}). The corresponding
Bismut--Cheeger eta form can still be defined, but it depends on $\wt{\nabla}^L$. The
analog of (\ref{eq 2.4.12}) still holds. Henceforth we write
$$\wh{\eta}^E(g^E, \nabla^E, T^HX, g^{T^VX}, L, \wt{\nabla}^L)$$
for the corresponding Bismut--Cheeger eta form when the unitary connection $\wt{\nabla}^L$
on $L\to B$ is not projected from $\pi^{\spin}_*E\to B$.
\end{remark}

\subsection{The analytic index in differential $K$-theory}

In this subsection we review the definition of Freed--Lott differential $K$-theory and the
analytic index in differential $K$-theory defined under and without Assumption \ref{ass 1}
respectively \cite{FL10}. The setup of the local FIT in \cite{FL10} is defined for
spin$^c$ fibers, but for our purpose we need spin fibers.

The Freed--Lott differential $K$-group $\wh{K}_{\FL}(X)$ is the abelian group generated
by quadruples $\E=(E, g^E, \nabla^E, \omega)$, where $E\to X$ is a complex vector bundle
with a Hermitian metric $g^E$ and a unitary connection $\nabla^E$, and \dis{\omega\in
\frac{\Omega^{\odd}(X)}{\im(d)}}. The only relation is $\E_0=\E_1$ if and only if there
exists a generator $(F, g^F, \nabla^F, \omega^F)$ of $\wh{K}_{\FL}(X)$ such that
\begin{eqnarray}
E_0\oplus F&\cong& E_1\oplus F,\label{eq 2.5.1}\\
\omega_0-\omega_1&=&\CS(\nabla^{E_0}\oplus\nabla^F, \nabla^{E_1}\oplus\nabla^F)\textrm{ in }
\frac{\Omega^{\odd}(X)}{\im(d)}.\label{eq 2.5.2}
\end{eqnarray}
In (\ref{eq 2.5.2}), instead of writing $F^*(\nabla^{E_1}\oplus\nabla^F)$, where
$F:E_0\oplus F\to E_1\oplus F$ is a smooth bundle isomorphism given by (\ref{eq 2.5.1}),
we follow the convention in \cite{FL10} that $F^*$ is suppressed.\footnote{This convention
will also be applied to Corollary \ref{coro 3.1.1} and \ref{coro 3.1.2}.} The differential
$K$-group $\wh{K}_{\FL}(X)$ can also be described in terms of $\Z_2$-graded generators,
i.e. $\E=(E, g^E, \nabla^E, \omega)$, where $E\to X$, $g^E$ and $\nabla^E$ are
$\Z_2$-graded. Two $\Z_2$-graded generators $\E_0$ and $\E_1$ are equal in $\wh{K}_{\FL}(X)$
if and only if there exist $\Z_2$-graded generators $\W=(W, g^W, \nabla^W, \omega_W)$ and
$\V=(V, g^V, \nabla^V, \omega_V)$ of $\wh{K}_{\FL}(X)$ of the form
\begin{equation}\label{eq 2.5.3}
W^+=W^-, g^{W^+}=g^{W^-}, \nabla^{W^+}=\nabla^{W^-},
\end{equation}
and similarly for $\V$, such that
\begin{eqnarray}
E_0\oplus W&\cong& E_1\oplus V\textrm{ as } \Z_2 \textrm{-graded complex vector bundles},
\label{eq 2.5.4}\\
\omega_0-\omega_1&=&\CS(\nabla^{E_0}\oplus\nabla^W, \nabla^{E_1}\oplus\nabla^V)
\textrm{ in }\frac{\Omega^{\odd}(X)}{\im(d)}.\label{eq 2.5.5}
\end{eqnarray}

Let $\pi:X\to B$ be a submersion with closed, oriented and spin fibers of even
dimension, and $\E=(E, g^E, \nabla^E, \omega)$ a generator of $\wh{K}_{\FL}(X)$. If
Assumption \ref{ass 1} is satisfied, then the differential analytic index
$\ind^{\an}_{\FL}(\E)\in\wh{K}_{\FL}(B)$ \cite[Definition 3.12]{FL10} is defined to
be
\begin{equation}\label{eq 2.5.6}
\begin{split}
\ind^{\an}_{\FL}(\E)&=\bigg(\ker(\DD^{S\otimes E}), g^{\ker(\DD^{S\otimes E})},
\nabla^{\ker(\DD^{S\otimes E})},\\
&\qquad\int_{X/B}\wh{A}(\nabla^{T^VX})\wedge\omega+\wt{\eta}^E(g^E, \nabla^E, T^HX,
g^{T^VX})\bigg).
\end{split}
\end{equation}

If Assumption \ref{ass 1} is not satisfied, the differential analytic index
$\ind^{\an}_{\FL}(\E; L)\in\wh{K}_{\FL}(B)$ \cite[Definition 7.27]{FL10} is defined
to be
\begin{equation}\label{eq 2.5.7}
\ind^{\an}_{\FL}(\E; L)=\bigg(L, g^L, \nabla^L, \int_{X/B}\wh{A}(\nabla^{T^VX})
\wedge\omega+\wh{\eta}^E(g^E, \nabla^E, T^HX, g^{T^VX}, L)\bigg),
\end{equation}
where $L\to B$ is a fixed choice of $\Z_2$-graded complex vector bundle satisfying the
MF property with respect to $\DD^{S\otimes E}$, and $g^L$, $\nabla^L$ are the projected
$\Z_2$-graded Hermitian metric and projected $\Z_2$-graded unitary connection on $L\to B$.

\section{Main results}

In this section we prove the main results of this paper. Henceforth by geometric data
we mean the quadruple
$$(g^E, \nabla^E, T^HX, g^{T^VX})$$
as defined in Section \ref{s 2.3}.

\subsection{A variational formula of the Bismut--Cheeger eta form without the kernel
bundle assumption and its applications}\label{s 3.1}

In this subsection we prove a variational formula of the Bismut--Cheeger eta form without
Assumption \ref{ass 1} (Proposition \ref{prop 3.1.1}).

The following lemma, which is an immediate consequence of \cite[Theorem 2.10]{B96}, roughly
says the Bismut--Cheeger eta form defined without Assumption \ref{ass 1} is stable under
perturbation of split quadruple.
\begin{lemma}\label{lemma 3.1.1}
Let $\pi:X\to B$ be a submersion with closed, oriented and spin fibers of even dimension
and $E\to X$ a complex vector bundle. Denote by $(g^E, \nabla^E, T^HX, g^{T^VX})$ a fixed
choice of geometric data. Let $L\to B$ be a $\Z_2$-graded complex vector bundle satisfying
the MF property with respect to $\DD^{S\otimes E}$. Denote by $g^L$ the projected Hermitian
metric and $\nabla^L$ the projected $\Z_2$-graded unitary connection on $L\to B$. Let
$\alpha:(0, \infty)\to[0, 1]$ be the smooth function given in Section \ref{s 2.4} and
$\wh{\bbb}^E$ the Bismut superconnection on $\wt{\pi^{\spin}_*}E\to B$ defined by
(\ref{eq 2.4.9}). Let $(W, g^W, \nabla^W, s_W)$ be a split quadruple over $B$ (cf. Section
\ref{s 2.1}). Define a superconnection $\aaa^W$ on $W\to B$ by
\begin{equation}\label{eq 3.1.1}
\aaa^W=s_W+\nabla^W,
\end{equation}
and define the rescaled superconnection $\aaa^W_t$ by
\begin{equation}\label{eq 3.1.2}
\aaa^W_t=\sqrt{t}\alpha(t)s_W+\nabla^W.
\end{equation}
If $\wh{\bbb}^{E, W}$ is the Bismut superconnection on $\wt{\pi^{\spin}_*}E\oplus W\to B$
defined by
$$\wh{\bbb}^{E, W}=\wh{\bbb}^E\oplus\aaa^W,$$
then
\begin{equation}\label{eq 3.1.3}
\wh{\eta}^{E, W}(g^E, \nabla^E, T^HX, g^{T^VX}, L\oplus W, \nabla^L\oplus\nabla^W)=
\wh{\eta}^E(g^E, \nabla^E, T^HX, g^{T^VX}, L)
\end{equation}
in \dis{\frac{\Omega^{\odd}(B)}{\im(d)}}, where the left-hand side of (\ref{eq 3.1.3})
denotes the Bismut--Cheeger eta form associated to $\wh{\bbb}^{E, W}$.
\end{lemma}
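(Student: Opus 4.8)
The plan is to reduce the claimed equality to a direct comparison of the Chern character forms of the rescaled superconnections $\wh{\bbb}^{E,W}_t=\wh{\bbb}^E_t\oplus\aaa^W_t$ and $\wh{\bbb}^E_t$, using the fact that the Bismut--Cheeger eta form is, by definition, a transgression integral $\int_0^\infty\str(\tfrac{d}{dt}\wh{\bbb}^\bullet_t\, e^{-\frac{1}{2\pi i}(\wh{\bbb}^\bullet_t)^2})\,dt$, together with the multiplicativity of Chern character forms and Chern--Simons forms under direct sums (as in \eqref{eq 2.1.10}). Since $\wh{\bbb}^{E,W}_t$ is the direct sum $\wh{\bbb}^E_t\oplus\aaa^W_t$, its Chern character form splits as $\ch(\wh{\bbb}^E_t)+\ch(\aaa^W_t)$, and correspondingly the transgression integrand splits. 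Hence
\[
\wh{\eta}^{E,W}=\wh{\eta}^E+\int_0^\infty\str\bigg(\frac{d\aaa^W_t}{dt}\,e^{-\frac{1}{2\pi i}(\aaa^W_t)^2}\bigg)dt
\]
in $\Omega^{\odd}(B)/\im(d)$, so it suffices to show that the second term vanishes in $\Omega^{\odd}(B)/\im(d)$.

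To handle that second term I would invoke the splitting hypothesis on $(W,g^W,\nabla^W,V)$ directly, which is exactly the situation covered by \cite[Theorem 2.10]{B96}. Concretely, write $W=\ker(V)\oplus\im(V)$; by condition (2) of the split quadruple definition $\nabla^W$ respects this decomposition, so $\aaa^W_t=\nabla^{\ker(V)}\oplus(\sqrt{t}\alpha(t)V+\nabla^{\im(V)})$ and the transgression integrand splits accordingly. On the $\ker(V)$ summand the integrand is identically zero since $\tfrac{d}{dt}\nabla^{\ker(V)}=0$. On the $\im(V)$ summand, condition (3) says $V$ is a unitary odd section preserving $\nabla^{\im(V)}$, so $V^2$ is a parallel self-adjoint endomorphism commuting with $\nabla^{\im(V)}$; one then computes that $\str(\tfrac{d}{dt}(\sqrt{t}\alpha(t)V+\nabla^{\im(V)})\,e^{-\frac{1}{2\pi i}(\sqrt{t}\alpha(t)V+\nabla^{\im(V)})^2})$ is a total $t$-derivative of a form whose boundary contributions at $t=0$ and $t=\infty$ either vanish or are exact, using that $\alpha(t)V$ is invertible for $t\geq 1$ (so the $t\to\infty$ contribution vanishes) and $\alpha(t)=0$ for $t\leq a$ (so near $t=0$ the superconnection is just the parallel connection $\nabla^{\im(V)}$ and contributes nothing). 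This is precisely the content of \cite[Theorem 2.10]{B96}, which I would cite rather than reprove.

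A point that requires a little care is the bookkeeping of the geometric data labelling the eta forms: the left-hand side of \eqref{eq 3.1.3} is built from the bundle $L\oplus W\to B$ with metric $g^L\oplus g^W$ and connection $\nabla^L\oplus\nabla^W$, and one must check that $\wh{\bbb}^{E,W}=\wh{\bbb}^E\oplus\aaa^W$ really is the Bismut superconnection of the setup "$(E,g^E,\nabla^E,T^HX,g^{T^VX})$ with MF-subbundle $L\oplus W$ and connection $\nabla^L\oplus\nabla^W$" in the sense of Remark \ref{remark 6}. This is essentially a definition-chase: $\wt{\DD}^{S\otimes E}(\alpha(t))\oplus\sqrt{t}\alpha(t)V$ plays the role of the degree-zero piece, the curvature term $\tfrac{c(T)}{4\sqrt{t}}$ acts by zero on the $W$-summand just as it does on $L^{\op}$, and the connection piece is $\nabla^{\pi^{\spin}_*E,u}\oplus(\nabla^{L,\op}\oplus\nabla^W)$ — matching the "non-projected connection" convention of Remark \ref{remark 6} with $\wt{\nabla}^{L\oplus W}=\nabla^L\oplus\nabla^W$. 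The main obstacle, modest as it is, is therefore not analytic but notational: making sure the direct-sum decomposition of the rescaled superconnection is genuinely respected for all $t$ (including the transition region $a\le t\le 1$ where $\alpha(t)$ interpolates), so that the transgression integral splits cleanly and \cite[Theorem 2.10]{B96} applies on the nose.
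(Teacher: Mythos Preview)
Your proposal is correct and follows essentially the same route as the paper: split $\wh{\eta}^{E,W}=\wh{\eta}^E+\wt{\eta}^W$ via the direct-sum decomposition $\wh{\bbb}^{E,W}_t=\wh{\bbb}^E_t\oplus\aaa^W_t$, then show $\wt{\eta}^W=0$ using the split-quadruple hypothesis and \cite{B96}.

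The one place where the paper is sharper than your sketch is the vanishing of $\wt{\eta}^W$. You argue that the integrand on $\im(V)$ is a total $t$-derivative whose boundary contributions vanish or are exact; the paper instead shows the integrand is \emph{identically zero} for every $t$. Using the split hypothesis one has $(\aaa^W_t)^2=t\alpha(t)^2 P^{\im(V)}+(\nabla^W)^2$ (this is \cite[(2.23)]{B96}), and then, since $\tfrac{d}{dt}\aaa^W_t$ is a scalar multiple of the odd section $V$ which commutes with the even exponential, a supertrace argument (\cite[(2.24)]{B96}) gives $\str\big(\tfrac{d\aaa^W_t}{dt}\,e^{-\frac{1}{2\pi i}(\aaa^W_t)^2}\big)=0$. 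So $\wt{\eta}^W=0$ already in $\Omega^{\odd}(B)$, not merely modulo exact forms. Your boundary-term argument would also work, but it is more effort than needed and only yields the conclusion in $\Omega^{\odd}(B)/\im(d)$.
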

\begin{proof}
Note that for all $t\in(0, \infty)$ we have $\wh{\bbb}^{E, W}_t=\wh{\bbb}^E_t\oplus
\aaa^W_t$. By (\ref{eq 2.1.12}), for any $t<T\in(0, \infty)$ we have
\begin{displaymath}
\begin{split}
\CS(\wh{\bbb}^{E, W}_T, \wh{\bbb}^{E, W}_t)&=\CS(\wh{\bbb}^E_T\oplus\aaa^W_T,
\wh{\bbb}^E_t\oplus\aaa^W_t)\\
&=\CS(\wh{\bbb}^E_T, \wh{\bbb}^E_t)+\CS(\aaa^W_T, \aaa^W_t)
\end{split}
\end{displaymath}
in \dis{\frac{\Omega^{\odd}(B)}{\im(d)}}. Denote by $\wt{\eta}^W$ the Bismut--Cheeger
eta form associated to $\aaa^W$. By letting $T\to\infty$ and $t\to 0$ in above we have
\begin{displaymath}
\begin{split}
&\wh{\eta}^{E, W}(g^E, \nabla^E, T^HX, g^{T^VX}, L\oplus W, \nabla^L\oplus\nabla^W)\\
=~&\wh{\eta}^E(g^E, \nabla^E, T^HX, g^{T^VX}, L)+\wt{\eta}^W.
\end{split}
\end{displaymath}
Thus proving (\ref{eq 3.1.3}) is equivalent to proving \dis{\wt{\eta}^W=0\in
\frac{\Omega^{\odd}(B)}{\im(d)}}. Since the quadruple $(W, g^W, \nabla^W, s_W)$ splits,
it follows from \cite[(2.23)]{B96} that
$$(\aaa^W_t)^2=t\alpha(t)^2P^{\im(s_W)}+(\nabla^W)^2,$$
where $P^{\im(s_W)}:W\to\im(s_W)$ is the projection onto $\im(s_W)\to B$ with respect to
the direct sum decomposition $W=\ker(s_W)\oplus\im(s_W)$. By \cite[(2.24)]{B96} we have
$$\str\bigg(\frac{d\aaa^W_t}{dt}e^{-\frac{1}{2\pi i}(\aaa^W_t)^2}\bigg)=0.$$
Thus $\wt{\eta}^W=0\in\Omega^{\odd}(B)$, and therefore (\ref{eq 3.1.3}) holds.
\end{proof}

Recall from \cite[p.289]{BGV} that a $\Z_2$-graded complex vector bundle $E\to X$ defines
an element in $K(X)$ by $E\mapsto[E^+]-[E^-]$. Two $\Z_2$-graded complex vector bundles
$E\to X$ and $F\to X$ define the same element in $K(X)$ if and only if there exist two
complex vector bundles $G\to X$ and $H\to X$ such that
\begin{equation}\label{eq 3.1.4}
\begin{split}
E^+\oplus G&\cong F^+\oplus H,\\
E^-\oplus G&\cong F^-\oplus H.
\end{split}
\end{equation}

We now prove a variational formula of the Bismut--Cheeger eta form without Assumption
\ref{ass 1}. The proof is actually similar to the ones of \cite{H16, L17}. The extra
technicality is caused by different choices of $\Z_2$-graded complex vector bundle
satisfying the MF property with respect to $\DD^{S\otimes E}$.
\begin{prop}\label{prop 3.1.1}
Let $\pi:X\to B$ be a submersion with closed, oriented and spin fibers of even dimension
$n$, and $E\to X$ a complex vector bundle. Fix $k\in\set{0, 1}$. Denote by $(g^E_k,
\nabla^E_k, T^H_kX, g^{T^VX}_k)$ the geometric data, $\DD^{S\otimes E}_k$ the corresponding
twisted spin Dirac operator, and $L_k\to B$ a $\Z_2$-graded complex vector bundle
satisfying the MF property with respect to $\DD^{S\otimes E}_k$. Denote by $g^{L_k}$ and
$\nabla^{L_k}$ the $\Z_2$-graded Hermitian metric and the $\Z_2$-graded unitary connection
on $L_k\to B$ projected from $\pi^{\spin}_*E\to B$. Then for $k\in\set{0, 1}$, there exist
a $\Z_2$-graded complex vector bundle $W_k\to B$ of the form $W_k^+=W_k^-$ with a
$\Z_2$-graded Hermitian metric $g^{W_k}$ of the form $g^{W_k^+}=g^{W_k^-}$ and a
$\Z_2$-graded unitary connection $\nabla^{W_k}$ of the form $\nabla^{W_k^+}=\nabla^{W_k^-}$
such that
$$L_0\oplus W_0\cong L_1\oplus W_1$$
as $\Z_2$-graded complex vector bundles via a smooth $\Z_2$-graded bundle isomorphism
$h:L_1\oplus W_1\to L_0\oplus W_0$, and
\begin{equation}\label{eq 3.1.5}
\begin{split}
&\wh{\eta}^E(g^E_1, \nabla^E_1, T^H_1X, g^{T^VX}_1, L_1)-\wh{\eta}^E(g^E_0, \nabla^E_0,
T^H_0X, g^{T^VX}_0, L_0)\\
=&\int_{X/B}\wt{\wh{A}}(\nabla^{T^VX}_0, \nabla^{T^VX}_1)\wedge\ch(\nabla^E_0)+\int_{X/B}
\wh{A}(\nabla^{T^VX}_1)\wedge\CS(\nabla^E_0, \nabla^E_1)\\
&\quad-\CS(h^*(\nabla^{L_0}\oplus\nabla^{W_0}), \nabla^{L_1}\oplus\nabla^{W_1})
\end{split}
\end{equation}
in \dis{\frac{\Omega^{\odd}(B)}{\im(d)}}, where $\wt{\wh{A}}(\nabla^{T^VX}_0,
\nabla^{T^VX}_1)$ is defined by (\ref{eq 2.1.16}).
\end{prop}
\begin{proof}
Since the space of the splitting map is affine, there exists a smooth path of horizontal
distributions $\set{T^H_tX\to X}_{t\in[0, 1]}$ joining $T^H_0X\to X$ and $T^H_1X\to X$.
By Section \ref{s 2.1} and above, there exists a smooth path
\begin{equation}\label{eq 3.1.6}
(g^E_t, \nabla^E_t, T^H_tX, g^{T^VX}_t)\textrm{ with } t\in[0, 1],
\end{equation}
joining $(g^E_0, \nabla^E_0, T^H_0X, g^{T^VX}_0)$ and $(g^E_1, \nabla^E_1, T^H_1X,
g^{T^VX}_1)$. From (\ref{eq 3.1.6}) one can define a new path, denoted by $\alpha$,
joining $(g^E_0, \nabla^E_0, T^H_0X, g^{T^VX}_0)$ and $(g^E_1, \nabla^E_1, T^H_1X,
g^{T^VX}_1)$, by
\begin{equation}\label{eq 3.1.7}
\alpha(t)=\left\{
            \begin{array}{ll}
              (g^E_0, \nabla^E_0, T^H_{2t}X, g^{T^VX}_{2t}), & \textrm{ for } t\in
              [0, \frac{1}{2}],  \\
              (g^E_{2t-1}, \nabla^E_{2t-1}, T^H_1X, g^{T^VX}_1), & \textrm{ for } t
              \in [\frac{1}{2}, 1]
            \end{array}
          \right..
\end{equation}
Here, for $t\in[0, \frac{1}{2}]$, the path $(T^H_{2t}X, g^{T^VX}_{2t})$ joining $(T^H_0X,
g^{T^VX}_0)$ and $(T^H_1X, g^{T^VX}_t)$ is induced by (\ref{eq 3.1.6}); for $t\in
[\frac{1}{2}, 1]$, the path $(g^E_{2t-1}, \nabla^E_{2t-1})$ joining $(g^E_0, \nabla^E_0)$
and $(g^E_1, \nabla^E_1)$ is induced by (\ref{eq 3.1.6}).

Consider the following diagram
\cdd{\e & & E \\ @VVV @VVV \\ \wt{X} @>>p_X> X \\ @V\wt{\pi}VV @VV\pi V \\ \wt{B}
@>>p_B> B}
where $\e:=p_X^*E$. The smooth path (\ref{eq 3.1.7}) defines the geometric data
\begin{equation}\label{eq 3.1.8}
(g^{\e}, \nabla^{\e}, T^H\wt{X}, g^{T^V\wt{X}}),
\end{equation}
where $\nabla^{\e}$ is defined by (\ref{eq 2.1.6}). Since the fibers of $\pi:X\to B$
are oriented and spin, the same is true for the fibers of $\wt{\pi}:\wt{X}\to\wt{B}$.

Denote by $\DD^{S\otimes\e}:\wt{\pi}^{\spin}_*\e\to\wt{\pi}^{\spin}_*\e$ the twisted spin
Dirac operator defined by the geometric data (\ref{eq 3.1.8}). Since Assumption \ref{ass 1}
is not satisfied, we choose and fix a $\Z_2$-graded complex vector bundle $\LLL\to\wt{B}$
satisfying the MF property with respect to $\DD^{S\otimes\e}$. Denote by $\K\to\wt{B}$ a
$\Z_2$-graded complementary subbundle of $\wt{\pi}^{\spin}_*\e\to\wt{B}$, i.e.
\begin{equation}\label{eq 3.1.9}
(\wt{\pi}_*^{\spin}\e)^\pm=\K^\pm\oplus\LLL^\pm.
\end{equation}
Denote by $g^{\LLL}$ the projected $\Z_2$-graded Hermitian metric on $\LLL\to\wt{B}$.

As in (\ref{eq 2.1.2}) we have the following smooth bundle isomorphisms
\begin{equation}\label{eq 3.1.10}
\begin{split}
i_{B, 0}^*(\wt{\pi}^{\spin}_*\e)^\pm&\cong i_{B, 1}^*(\wt{\pi}^{\spin}_*\e)^\pm,\\
i_{B, 0}^*\K^\pm&\cong i_{B, 1}^*\K^\pm,\\
i_{B, 0}^*\LLL^\pm&\cong i_{B, 1}^*\LLL^\pm.
\end{split}
\end{equation}
Write $K^\pm\to B$ for $i_{B, 0}^*\K^\pm\to B$ and $L^\pm\to B$ for $i_{B, 0}^*\LLL^\pm
\to B$. Also, write $g^{L_k}=i_{B, k}^*g^{\LLL}$ for $k\in\set{0, 1}$. On the other hand,
since
$$i_{B, k}^*(\wt{\pi}^{\spin}_*\e)^\pm\cong(\pi^{\spin}_*E)^\pm,$$
for $k=0$ and $k=1$ respectively, it follows that $\pi^{\spin}_*E\to B$ admits a direct
sum decomposition, given by
\begin{equation}\label{eq 3.1.11}
(\pi^{\spin}_*E)^+=K^+\oplus L^+,\qquad(\pi^{\spin}_*E)^-=K^-\oplus L^-.
\end{equation}
Since
\begin{equation}\label{eq 3.1.12}
\DD^{S\otimes\e}|_{i_{B, k}^*\wt{\pi}^{\spin}_*\e}=\DD^{S\otimes E}_k,
\end{equation}
it follows that $\DD^{S\otimes E}_k:(\pi^{\spin}_*E)^+\to(\pi^{\spin}_*E)^-$ is block
diagonal with respect to (\ref{eq 3.1.11}) and the restriction $\DD^{S\otimes E}_k:K^+
\to K^-$ is an isomorphism. Thus $L\to B$ satisfies the MF property with respect to
$\DD^{S\otimes E}_k$ for $k=0$ and $k=1$ respectively. Therefore the $K$-theoretic
analytic index of $[E]\in K(X)$ is given by
$$\ind^{\an}([E])=[L^+]-[L^-].$$
By assumption, the $K$-theoretic analytic index of $[E]\in K(X)$ are given by
$$\ind^{\an}([E])=[L_0^+]-[L_0^-]\textrm{ and }\ind^{\an}([E])=[L_1^+]-[L_1^-]$$
respectively. Since the $K$-theoretic analytic index is well defined, it follows from
(\ref{eq 3.1.3}) that there exist complex vector bundles $G_k\to B$ and $H_k\to B$, where
$k\in\set{0, 1}$, such that
\begin{equation}\label{eq 3.1.13}
\begin{split}
L^+_0\oplus G_0&\cong L^+\oplus H_0,\\
L^-_0\oplus G_0&\cong L^-\oplus H_0,\\
L^+_1\oplus G_1&\cong L^+\oplus H_1,\\
L^-_1\oplus G_1&\cong L^-\oplus H_1.
\end{split}
\end{equation}
It follows from (\ref{eq 3.1.13}) that
\begin{equation}\label{eq 3.1.14}
\begin{split}
L^+_0\oplus(G_0\oplus H_1)&\cong L^+\oplus H_0\oplus H_1\cong L^+\oplus H_1\oplus
H_0\cong L^+_1\oplus(G_1\oplus H_0),\\
L^-_0\oplus(G_0\oplus H_1)&\cong L^-\oplus H_0\oplus H_1\cong L^-\oplus H_1\oplus
H_0\cong L^-_1\oplus(G_1\oplus H_0).
\end{split}
\end{equation}
Write
\begin{displaymath}
\begin{split}
H^+&=H^-=H_0\oplus H_1,\\
W_0^+&=W_0^-=G_0\oplus H_1,\\
W_1^+&=W_1^-=G_1\oplus H_0
\end{split}
\end{displaymath}
respectively. Define $\Z_2$-graded complex vector bundles $H\to B$, $W_0\to B$ and $W_1
\to B$ by
$$H=H^+\oplus H^-,\qquad W_0=W_0^+\oplus W_0^-,\qquad W_1=W_1^+\oplus W_1^-.$$
Write $\HH=p_B^*H$. By (\ref{eq 3.1.14}) we have the following commutative diagram
\begin{equation}\label{eq 3.1.15}
\xymatrix{ & L\oplus H \ar[dl]_{f_0} \ar[dr]^{f_1} & \\ L_0\oplus W_0  & & L_1\oplus W_1
\ar[ll]^h}
\end{equation}
where $f_0$ and $f_1$ are the resulting smooth $\Z_2$-graded bundle isomorphisms, and
$h:=f_0\circ f_1^{-1}$.

For $k\in\set{0, 1}$, put a Hermitian metric $g^{W_k^+}$ on $W_k^+\to B$ and a unitary
connection $\nabla^{W_k^+}$ on $W_k^+\to B$ with respect to $g^{W_k^+}$. Then define a
$\Z_2$-graded Hermitian metric $g^{W_k}$ and a $\Z_2$-graded unitary connection
$\nabla^{W_k}$ on $W_k\to B$ by $g^{W_k^-}:=g^{W_k^+}$ and $\nabla^{W_k^-}:=
\nabla^{W_k^+}$.

Note that $g^{L\oplus H}_0:=f_0^*(g^{L_0}\oplus g^{W_0})$ and $g^{L\oplus H}_1:=f_1^*
(g^{L_1}\oplus g^{W_1})$ are $\Z_2$-graded Hermitian metrics on $L\oplus H\to B$, and
$$\nabla_0:=f_0^*(\nabla^{L_0}\oplus\nabla^{W_0}),\qquad\nabla_1:=f_1^*(\nabla^{L_1}
\oplus\nabla^{W_1})$$
are $\Z_2$-graded connections on $L\oplus H\to B$ that are unitary with respect to
$g^{L\oplus H}_0$ and $g^{L\oplus H}_1$ respectively. Define a $\Z_2$-graded unitary
connection $\wt{\nabla}^{\LLL\oplus\HH}$ on $\LLL\oplus\HH\to\wt{B}$ by (\ref{eq 2.1.6})
such that
\begin{equation}\label{eq 3.1.16}
i_{B, 0}^*\wt{\nabla}^{\LLL\oplus\HH}=\nabla_0,\qquad i_{B, 1}^*\wt{\nabla}^{\LLL\oplus
\HH}=\nabla_1.
\end{equation}

The rescaled Bismut superconnection $\wh{\bbb}^{\e, \HH}_t$ on the $\Z_2$-graded infinite
rank bundle $\wt{\pi}^{\spin}_*\e\oplus\LLL^{\op}\oplus\HH^{\op}\to\wt{B}$ is defined to be
$$\wh{\bbb}^{\e, \HH}_t=\wh{\bbb}^{\e, \HH}_{[0], t}+(\nabla^{\wt{\pi}^{\spin}_*\e, u}
\oplus\wt{\nabla}^{\LLL\oplus\HH, \op})-\wh{\bbb}^{\e, \HH}_{[2], t},$$
where
$$\wh{\bbb}^{\e, \HH}_{[0], t}=\sqrt{t}\begin{pmatrix} 0 & 0 &
(\wt{\DD}^{S\otimes\e}_+(\alpha(t)))^* & 0 \\ 0 & 0 & 0 & \alpha(t)\id \\
\wt{\DD}^{S\otimes\e}_+(\alpha(t)) & 0 & 0 & 0 \\ 0 & \alpha(t)\id & 0 & 0
\end{pmatrix},$$
and $\wh{\bbb}^{\e, \HH}_{[2], t}\in\Omega^2(\wt{B}, \End^-(\wt{\pi}^{\spin}_*\e\oplus
\LLL^{\op}\oplus\HH^{\op}))$ acts on $\Omega(\wt{B}, \wt{\pi}^{\spin}_*\e)$ by
\dis{\frac{c(\wt{T})}{4\sqrt{t}}} and acts on $\Omega(\wt{B}, \LLL^{\op}\oplus\HH^{\op})$
by zero. Here $\wt{T}$ is the curvature 2-form of the fiber bundle $\wt{\pi}:\wt{X}\to
\wt{B}$. For convenience we still use the notation \dis{\frac{c(\wt{T})}{4\sqrt{t}}} for
$\wh{\bbb}^{\e, \HH}_{[2], t}$. Thus
$$\wh{\bbb}^{\e, \HH}_t=\wh{\bbb}^{\e, \HH}_{[0], t}+(\nabla^{\wt{\pi}^{\spin}_*\e, u}
\oplus\wt{\nabla}^{\LLL\oplus\HH, \op})-\frac{c(\wt{T})}{4\sqrt{t}}.$$
The (unrescaled) Bismut superconnection $\wh{\bbb}^{\e, \HH}$ is recovered by taking
$t=1$ in $\wh{\bbb}^{\e, \HH}_t$.

By Remark \ref{remark 4}, for $t\leq a$ we have
$$\wh{\bbb}^{\e, \HH}_t=\bbb^{\e}_t\oplus\wt{\nabla}^{\LLL\oplus\HH, \op}.$$
Thus by Remark \ref{remark 5} the analog of (\ref{eq 2.4.11}) holds, i.e.
\begin{equation}\label{eq 3.1.17}
\begin{split}
\lim_{t\to 0}\ch(\wh{\bbb}^{\e, \HH}_t)&=\lim_{t\to 0}\ch(\bbb^{\e}_t)-\ch
(\nabla^{\LLL\oplus\HH})\\
&=\int_{\wt{X}/\wt{B}}\wh{A}(\nabla^{T^V\wt{X}})\wedge\ch(\nabla^{\e})-\ch
(\wt{\nabla}^{\LLL\oplus\HH}).
\end{split}
\end{equation}
On the other hand, since $\wh{\bbb}^{\e, \HH}_{[0], t}$ is invertible for $t\geq 1$,
it follows that the analog of (\ref{eq 2.4.10}) holds, i.e.
\begin{equation}\label{eq 3.1.18}
\lim_{t\to\infty}\ch(\wh{\bbb}^{\e, \HH}_t)=0.
\end{equation}
Define the Bismut--Cheeger eta form associated to $\wh{\bbb}^{\e, \HH}$ by
$$\wh{\eta}^{\e, \HH}(g^{\e}, \nabla^{\e}, T^H\wt{X}, g^{T^V\wt{X}}, \LLL\oplus\HH,
\wt{\nabla}^{\LLL\oplus\HH})=\int^\infty_0\str\bigg(\frac{d\wh{\bbb}^{\e, \HH}_t}
{dt}e^{-\frac{1}{2\pi i}(\wh{\bbb}^{\e, \HH}_t)^2}\bigg)dt.$$
We now temporarily suppress the data defining the Bismut--Cheeger eta form to shorten the
expression. By (\ref{eq 3.1.17}), (\ref{eq 3.1.18}) and Remark \ref{remark 5} we have
\begin{equation}\label{eq 3.1.19}
d\wh{\eta}^{\e, \HH}=\int_{\wt{X}/\wt{B}}\wh{A}(\nabla^{T^V\wt{X}})\wedge\ch(\nabla^{\e})-
\ch(\wt{\nabla}^{\LLL\oplus\HH}).
\end{equation}
Denote by $i:\partial\wt{B}\to\wt{B}$ the inclusion map. By taking $M\to B$ to be
$\wt{B}\to B$ and $\omega$ to be $\wh{\eta}^{\e, \HH}$ in (\ref{eq 2.1.4}) we have
\begin{equation}\label{eq 3.1.20}
-(i_{B, 1}^*\wh{\eta}^{\e, \HH}-i_{B, 0}^*\wh{\eta}^{\e, \HH})=-\int_{\partial\wt{B}/B}
i^*\wh{\eta}^{\e, \HH}=\int_{\wt{B}/B}d^{\wt{B}}\wh{\eta}^{\e, \HH}-d^B\int_{\wt{B}/B}
\wh{\eta}^{\e, \HH}.
\end{equation}
By modding out exact forms in (\ref{eq 3.1.20}), it follows from (\ref{eq 2.1.3}) and
(\ref{eq 3.1.19}) that
\begin{equation}\label{eq 3.1.21}
\begin{split}
&i_{B, 1}^*\wh{\eta}^{\e, \HH}-i_{B, 0}^*\wh{\eta}^{\e, \HH}=-\int_{\wt{B}/B}d^{\wt{B}}
\wh{\eta}^{\e, \HH}\\
=&-\int_{\wt{B}/B}\bigg(\int_{\wt{X}/\wt{B}}\wh{A}(\nabla^{T^V\wt{X}})\wedge
\ch(\nabla^{\e})-\ch(\wt{\nabla}^{\LLL\oplus\HH})\bigg)\\
=&\int_{\wt{B}/B}\int_{\wt{X}/\wt{B}}(-\wh{A}(\nabla^{T^V\wt{X}})\wedge\ch(\nabla^{\e}))
-\CS(\nabla_0, \nabla_1)
\end{split}
\end{equation}
in \dis{\frac{\Omega^{\odd}(B)}{\im(d)}}. Since $f_1$ covers the identity map $\id_B$,
it follows that
\begin{displaymath}
\begin{split}
\CS(\nabla_0, \nabla_1)&=\CS(f_0^*(\nabla^{L_0}\oplus\nabla^{W_0}), f_1^*(\nabla^{L_1}
\oplus\nabla^{W_1}))\\
&=\CS((f_1^{-1})^*f_0^*(\nabla^{L_0}\oplus\nabla^{W_0}), \nabla^{L_1}\oplus\nabla^{W_1})
\end{split}
\end{displaymath}
Note that $h^*=(f_1^{-1})^*\circ f_0^*$. Since \dis{\int_{X/B}\circ\int_{\wt{X}/X}=
\int_{\wt{X}/B}=\int_{\wt{B}/B}\circ\int_{\wt{X}/\wt{B}}}, it follows that
\begin{equation}\label{eq 3.1.22}
\int_{\wt{B}/B}\int_{\wt{X}/\wt{B}}(-\wh{A}(\nabla^{T^V\wt{X}})\wedge\ch(\nabla^{\e}))
=\int_{X/B}\int_{\wt{X}/X}(-\wh{A}(\nabla^{T^V\wt{X}})\wedge\ch(\nabla^{\e})).
\end{equation}
By (\ref{eq 3.1.8}), (\ref{eq 2.1.3}) and (\ref{eq 2.1.16}) we have
\begin{equation}\label{eq 3.1.23}
\begin{split}
\int_{\wt{X}/X}(-\wh{A}(\nabla^{T^V\wt{X}})\wedge\ch(\nabla^{\e}))&=\wt{\wh{A}}
(\nabla^{T^VX}_0, \nabla^{T^VX}_1)\wedge\ch(\nabla^E_0)\\
&\qquad+\wh{A}(\nabla^{T^VX}_1)\wedge\CS(\nabla^E_0, \nabla^E_1).
\end{split}
\end{equation}
By putting (\ref{eq 3.1.22}) and (\ref{eq 3.1.23}) into (\ref{eq 3.1.21}) we obtain
\begin{equation}\label{eq 3.1.24}
\begin{split}
&i_{B, 1}^*\wh{\eta}^{\e, \HH}-i_{B, 0}^*\wh{\eta}^{\e, \HH}\\
=&\int_{X/B}\wt{\wh{A}}(\nabla^{T^VX}_0, \nabla^{T^VX}_1)\wedge\ch(\nabla^E_0)+
\int_{X/B}\wh{A}(\nabla^{T^VX}_1)\wedge\CS(\nabla^E_0, \nabla^E_1)\\
&\qquad-\CS(h^*(\nabla^{L_0}\oplus\nabla^{W_0}), \nabla^{L_1}\oplus\nabla^{W_1})
\end{split}
\end{equation}
in \dis{\frac{\Omega^{\odd}(B)}{\im(d)}}. On the other hand, it follows from
(\ref{eq 3.1.12}), (\ref{eq 3.1.15}) and (\ref{eq 3.1.16}) that
\begin{equation}\label{eq 3.1.25}
\begin{split}
&i_{B, 1}^*\wh{\eta}^{\e, \HH}(g^{\e}, \nabla^{\e}, T^H\wt{X}, g^{T^V\wt{X}},
\LLL\oplus\HH, \wt{\nabla}^{\LLL\oplus\HH})\\
=~&\wh{\eta}^{E, W}(g^E_1, \nabla^E_1, T^H_1X, g^{T^VX}_1, L_1\oplus W, \nabla^{L_1}
\oplus\nabla^{W_1}).
\end{split}
\end{equation}
Define $s_{W_1}\in\Gamma(B, \End^-(W_1))$ by \dis{s_{W_1}=\begin{pmatrix} 0 & \id \\
\id & 0 \end{pmatrix}}. By Example \ref{exam 1} the quadruple $(W_1, g^{W_1}, \nabla^{W_1},
s_{W_1})$ over $B$ splits. Define a superconnection $\aaa^{W_1}$ on $W_1\to B$ by
(\ref{eq 3.1.1}) and its rescaled superconnection $\aaa^{W_1}_t$ by (\ref{eq 3.1.2}). The
Bismut superconnection $\wh{\bbb}^{E, W_1}$ defining the Bismut--Cheeger eta form
$\wh{\eta}^{E, W_1}(g^E_1, \nabla^E_1, T^H_1X, g^{T^VX}_1, L_1\oplus W_1, \nabla^{L_1}
\oplus\nabla^{W_1})$ is given by
$$\wh{\bbb}^{E, W_1}=\wh{\bbb}^E\oplus\aaa^{W_1},$$
It follows from Lemma \ref{lemma 3.1.1} that (\ref{eq 3.1.25}) becomes
\begin{equation}\label{eq 3.1.26}
\begin{split}
&i_{B, 1}^*\wh{\eta}^{\e, \HH}(g^{\e}, \nabla^{\e}, T^H\wt{X}, g^{T^V\wt{X}},
\LLL\oplus\HH, \wt{\nabla}^{\LLL\oplus\HH})\\
=~&\wh{\eta}^E(g^E_1, \nabla^E_1, T^H_1X, g^{T^VX}_1, L_1)
\end{split}
\end{equation}
in \dis{\frac{\Omega^{\odd}(B)}{\im(d)}}. By considering the split quadruple
$(W_0, g^{W_0}, \nabla^{W_0}, s_{W_0})$ defined in a similar way as above we have
\begin{equation}\label{eq 3.1.27}
\begin{split}
&i_{B, 0}^*\wh{\eta}^{\e, \HH}(g^{\e}, \nabla^{\e}, T^H\wt{X}, g^{T^V\wt{X}},
\LLL\oplus\HH, \wt{\nabla}^{\LLL\oplus\HH})\\
=~&\wh{\eta}^E(g^E_0, \nabla^E_0, T^H_0X, g^{T^VX}_0, L_0)
\end{split}
\end{equation}
in \dis{\frac{\Omega^{\odd}(B)}{\im(d)}}. Thus by putting (\ref{eq 3.1.26}) and
(\ref{eq 3.1.27}) into (\ref{eq 3.1.24}) we have
\begin{displaymath}
\begin{split}
&\wh{\eta}^E(g^E_1, \nabla^E_1, T^H_1X, g^{T^VX}_1, L_1)-\wh{\eta}^E(g^E_0,
\nabla^E_0, T^H_0X, g^{T^VX}_0, L_0)\\
=&\int_{X/B}\wt{\wh{A}}(\nabla^{T^VX}_0, \nabla^{T^VX}_1)\wedge\ch(\nabla^E_0)+
\int_{X/B}\wh{A}(\nabla^{T^VX}_1)\wedge\CS(\nabla^E_0, \nabla^E_1)\\
&\qquad-\CS(h^*(\nabla^{L_0}\oplus\nabla^{W_0}), \nabla^{L_1}\oplus\nabla^{W_1})
\end{split}
\end{displaymath}
in \dis{\frac{\Omega^{\odd}(B)}{\im(d)}}. Thus (\ref{eq 3.1.5}) holds.
\end{proof}

We now give another proofs of \cite[(3) and (4) of Corollary 7.36]{FL10}.
\begin{coro}\label{coro 3.1.1}
Let $\pi:X\to B$ be a submersion with closed, oriented and spin fibers of even
dimension, and $\E=(E, g^E, \nabla^E, \omega)$ a generator of $\wh{K}_{\FL}(X)$. If
$L_0\to B$ and $L_1\to B$ are $\Z_2$-graded complex vector bundles satisfying the MF
property with respect to $\DD^{S\otimes E}$, then
\begin{equation}\label{eq 3.1.28}
\ind^{\an}_{\FL}(\E; L_0)=\ind^{\an}_{\FL}(\E; L_1).
\end{equation}
\end{coro}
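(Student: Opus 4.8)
The plan is to apply Proposition \ref{prop 3.1.1} in the degenerate situation where the two copies of the geometric data coincide, i.e.\ $(g^E_0, \nabla^E_0, T^H_0X, g^{T^VX}_0)=(g^E_1, \nabla^E_1, T^H_1X, g^{T^VX}_1)=(g^E, \nabla^E, T^HX, g^{T^VX})$, so that the only data allowed to vary is the finite rank subbundle, $L_0\to B$ and $L_1\to B$, satisfying the MF property with respect to $\DD^{S\otimes E}$. With this choice $\nabla^{T^VX}_1=\nabla^{T^VX}_0$ and $\nabla^E_1=\nabla^E_0$, so the two transgression terms $\wt{\wh{A}}(\nabla^{T^VX}_1, \nabla^{T^VX}_0)$ and $\CS(\nabla^E_1, \nabla^E_0)$ on the right-hand side of (\ref{eq 3.1.5}) vanish in $\frac{\Omega^{\odd}(X)}{\im(d)}$ (a transgression form of a closed form between a connection and itself is obtained from the constant path, hence is zero). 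Thus Proposition \ref{prop 3.1.1} reduces to the identity
\[
\wh{\eta}^E(g^E, \nabla^E, T^HX, g^{T^VX}, L_1)-\wh{\eta}^E(g^E, \nabla^E, T^HX, g^{T^VX}, L_0)=-\CS(\nabla^{L_1}\oplus\nabla^W, \nabla^{L_0}\oplus\nabla^V)
\]
in $\frac{\Omega^{\odd}(B)}{\im(d)}$, where $W\to B$ and $V\to B$, with $\Z_2$-graded unitary connections $\nabla^W$ and $\nabla^V$, are the $\Z_2$-graded complex vector bundles produced in the proof of Proposition \ref{prop 3.1.1}; recall from there that $W$ and $V$ have the special form $W^+=W^-$, $g^{W^+}=g^{W^-}$, $\nabla^{W^+}=\nabla^{W^-}$ (and similarly for $V$), i.e.\ exactly the shape (\ref{eq 2.5.3}), and that $L_1\oplus W\cong L_0\oplus V$ as $\Z_2$-graded complex vector bundles.

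Next I would unwind the definition (\ref{eq 2.5.7}) of the differential analytic index. Since $\int_{X/B}\wh{A}(\nabla^{T^VX})\wedge\omega$ does not depend on $L$, the only differences between $\ind^{\an}_{\FL}(\E; L_0)$ and $\ind^{\an}_{\FL}(\E; L_1)$ are the triples $(L_j, g^{L_j}, \nabla^{L_j})$ and the eta-form summands $\wh{\eta}^E(g^E, \nabla^E, T^HX, g^{T^VX}, L_j)$. Take as auxiliary $\Z_2$-graded generators of $\wh{K}_{\FL}(B)$ the quadruples $\W:=(V, g^V, \nabla^V, 0)$ and $\V:=(W, g^W, \nabla^W, 0)$, both of the form (\ref{eq 2.5.3}). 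Then $L_0\oplus V\cong L_1\oplus W$ is precisely the isomorphism of $\Z_2$-graded bundles required by (\ref{eq 2.5.4}), and, using (\ref{eq 2.1.8}), the displayed identity above is equivalent to
\[
\wh{\eta}^E(\cdots, L_0)-\wh{\eta}^E(\cdots, L_1)=\CS(\nabla^{L_1}\oplus\nabla^W, \nabla^{L_0}\oplus\nabla^V).
\]
Adding the common term $\int_{X/B}\wh{A}(\nabla^{T^VX})\wedge\omega$ inside each argument, this is exactly the compatibility of the differential-form parts of $\ind^{\an}_{\FL}(\E; L_0)$ and $\ind^{\an}_{\FL}(\E; L_1)$ demanded by (\ref{eq 2.5.5}). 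Hence, by the equivalence relation (\ref{eq 2.5.3})--(\ref{eq 2.5.5}) in $\wh{K}_{\FL}(B)$, we obtain $\ind^{\an}_{\FL}(\E; L_0)=\ind^{\an}_{\FL}(\E; L_1)$, which is (\ref{eq 3.1.28}).

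The argument is thus entirely formal once Proposition \ref{prop 3.1.1} is available; no new analytic input is needed. The only point requiring care is the $\Z_2$-graded bookkeeping: keeping track of which of the two bundles $W$, $V$ plays the role of $\W$ and which plays $\V$ in (\ref{eq 2.5.3})--(\ref{eq 2.5.5}), and matching the order of the arguments of the Chern--Simons forms (here the antisymmetry (\ref{eq 2.1.8}), the cocycle identity (\ref{eq 2.1.9}) and the additivity (\ref{eq 2.1.10}) of $\CS$, together with the fact that $W$ and $V$ are of the form (\ref{eq 2.5.3}), are what make the two sides line up). This is the step I would treat most carefully, but it presents no essential difficulty.
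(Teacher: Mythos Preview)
Your proposal is correct and follows essentially the same approach as the paper's own proof: specialize Proposition~\ref{prop 3.1.1} to the case of identical geometric data so that the transgression terms vanish, leaving exactly the Chern--Simons relation between the eta forms that, together with (\ref{eq 3.1.15}), verifies the equivalence relation (\ref{eq 2.5.3})--(\ref{eq 2.5.5}) in $\wh{K}_{\FL}(B)$. The paper's proof is terser but the content is the same, including the use of (\ref{eq 2.1.8}) to match signs; your careful handling of which of $W$, $V$ plays the role of $\W$, $\V$ is exactly the bookkeeping the paper leaves implicit.
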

\begin{proof}
By (\ref{eq 2.5.4}), (\ref{eq 2.5.5}) and (\ref{eq 2.5.7}), proving (\ref{eq 3.1.28})
is equivalent to showing the existence of $\Z_2$-graded generators $\W_0$ and $\W_1$ of
$\wh{K}_{\FL}(B)$ of the form (\ref{eq 2.5.3}) such that
\begin{equation}\label{eq 3.1.29}
L_0\oplus W_0\cong L_1\oplus W_1
\end{equation}
as $\Z_2$-graded complex vector bundles and
\begin{equation}\label{eq 3.1.30}
\begin{split}
&\wh{\eta}^E(g^E, \nabla^E, T^HX, g^{T^VX}, L_1)-\wh{\eta}^E(g^E, \nabla^E, T^HX,
g^{T^VX}, L_0)\\
=~&\CS(\nabla^{L_1}\oplus\nabla^{W_1}, \nabla^{L_0}\oplus\nabla^{W_0})
\end{split}
\end{equation}
in \dis{\frac{\Omega^{\odd}(B)}{\im(d)}}. First note that (\ref{eq 3.1.29}) follows from
(\ref{eq 3.1.15}). By taking $g^E_1=g^E_0=g^E$, $\nabla^E_1=\nabla^E_0=\nabla^E$,
$T^H_1X=T^H_2X=T^HX$ and $g^{T^VX}_1=g^{T^VX}_0=g^{T^VX}$ in (\ref{eq 3.1.5}), we obtain
(\ref{eq 3.1.30}) by (\ref{eq 2.1.8}).
\end{proof}
\begin{coro}\label{coro 3.1.2}
Let $\pi:X\to B$ be a submersion with closed, oriented and spin fibers of even dimension,
and $\E=(E, g^E, \nabla^E, \omega)$ a generator of $\wh{K}_{\FL}(X)$. If Assumption
\ref{ass 1} is satisfied, then for any $\Z_2$-graded complex vector bundle $L\to B$
satisfying the MF property with respect to $\DD^{S\otimes E}$ we have
\begin{equation}\label{eq 3.1.31}
\ind^{\an}_{\FL}(\E; L)=\ind^{\an}_{\FL}(\E),
\end{equation}
where the right-hand side of (\ref{eq 3.1.31}) is given by (\ref{eq 2.5.6}).
\end{coro}
\begin{proof}
Since the kernel bundle $\ker(\DD^{S\otimes E})\to B$ exists, and it satisfies the MF
property with respect to $\DD^{S\otimes E}$, it follows from Corollary \ref{coro 3.1.1}
that $\ind^{\an}_{\FL}(\E; L)=\ind^{\an}_{\FL}(\E; \ker(\DD^{S\otimes E}))$. Thus proving
(\ref{eq 3.1.31}) is equivalent to proving
\begin{equation}\label{eq 3.1.32}
\ind^{\an}_{\FL}(\E; \ker(\DD^{S\otimes E}))=\ind^{\an}_{\FL}(\E).
\end{equation}
As in the proof of Corollary \ref{coro 3.1.2}, proving (\ref{eq 3.1.32}) is equivalent to
showing the existence of $\Z_2$-graded generators $\W_0$ and $\W_1$ of $\wh{K}_{\FL}(B)$
of the form (\ref{eq 2.5.3}) such that
\begin{equation}\label{eq 3.1.33}
\ker(\DD^{S\otimes E})\oplus W_1\cong\ker(\DD^{S\otimes E})\oplus W_0
\end{equation}
as $\Z_2$-graded complex vector bundles and
\begin{equation}\label{eq 3.1.34}
\begin{split}
&\wh{\eta}^E(g^E, \nabla^E, T^HX, g^{T^VX}, \ker(\DD^{S\otimes E}))-\wt{\eta}^E
(g^E, \nabla^E, T^HX, g^{T^VX})\\
=&\CS(\nabla^{\ker(\DD^{S\otimes E})}\oplus\nabla^{W_1}, \nabla^{\ker(\DD^{S\otimes E})}
\oplus\nabla^{W_0})
\end{split}
\end{equation}
in \dis{\frac{\Omega^{\odd}(B)}{\im(d)}}. We choose $W_0\to B$ and $W_1\to B$ to be zero
$\Z_2$-graded complex vector bundles, so that (\ref{eq 3.1.33}) is satisfied. Since
$$\CS(\nabla^{\ker(\DD^{S\otimes E})}\oplus\nabla^{W_1}, \nabla^{\ker(\DD^{S\otimes E})}
\oplus\nabla^{W_0})=\CS(\nabla^{\ker(\DD^{S\otimes E})}, \nabla^{\ker(\DD^{S\otimes E})})
=0$$
in \dis{\frac{\Omega^{\odd}(B)}{\im(d)}}, it follows that (\ref{eq 3.1.34}) becomes
\begin{equation}\label{eq 3.1.35}
\wh{\eta}^E(g^E, \nabla^E, T^HX, g^{T^VX}, \ker(\DD^{S\otimes E}))=\wt{\eta}^E
(g^E, \nabla^E, T^HX, g^{T^VX})
\end{equation}
in \dis{\frac{\Omega^{\odd}(B)}{\im(d)}}. First note that in the current situation,
the $\Z_2$-graded Hermitian bundle $L^{\op}\to B$ in the definition of $\wh{\bbb}^E$
given by (\ref{eq 2.4.9}) is $\ker(\DD^{S\otimes E})^{\op}\to B$, and the unitary
connection $\nabla^{L, \op}$ is $\nabla^{\ker(\DD^{S\otimes E}), \op}$. To show
(\ref{eq 3.1.35}), fix any $T\geq 1$ and $t<a$. Since $\wh{\bbb}^E_t=\bbb^E_t\oplus
\nabla^{\ker(\DD^{S\otimes E}), \op}$ by Remark \ref{remark 4}, it follows from
(\ref{eq 2.1.13}), (\ref{eq 2.1.11}) and (\ref{eq 2.1.12}) that
\begin{displaymath}
\begin{split}
&\CS(\wh{\bbb}^E_T, \wh{\bbb}^E_t)-\CS(\bbb^E_T, \bbb^E_t)\\
=&\CS(\wh{\bbb}^E_T, \wh{\bbb}^E_t)-\CS(\bbb^E_T, \bbb^E_t)-\CS(\nabla^{\ker
(\DD^{S\otimes E}), \op}, \nabla^{\ker(\DD^{S\otimes E}), \op})\\
=&\CS(\wh{\bbb}^E_T, \wh{\bbb}^E_t)-\CS(\bbb^E_T\oplus\nabla^{\ker(\DD^{S\otimes E}),
\op}, \bbb^E_t\oplus\nabla^{\ker(\DD^{S\otimes E}), \op})\\
=&\CS(\wh{\bbb}^E_T, \wh{\bbb}^E_t)+\CS(\bbb^E_t\oplus\nabla^{\ker(\DD^{S\otimes E}),
\op}, \bbb^E_T\oplus\nabla^{\ker(\DD^{S\otimes E}), \op})\\
=&\CS(\wh{\bbb}^E_T, \bbb^E_T\oplus\nabla^{\ker(\DD^{S\otimes E}), \op})
\end{split}
\end{displaymath}
in \dis{\frac{\Omega^{\odd}(B)}{\im(d)}}. By letting $T\to\infty$ and $t\to 0$ in above,
we have
\begin{displaymath}
\begin{split}
&\wh{\eta}^E(g^E, \nabla^E, T^HX, g^{T^VX}, \ker(\DD^{S\otimes E}))-\wt{\eta}^E
(g^E, \nabla^E, T^HX, g^{T^VX})\\
=&\lim_{T\to\infty}\CS(\wh{\bbb}^E_T, \bbb^E_T\oplus\nabla^{\ker(\DD^{S\otimes E}),
\op})
\end{split}
\end{displaymath}
in \dis{\frac{\Omega^{\odd}(B)}{\im(d)}}. By the estimates in \cite[\S9.3]{BGV}
\footnote{This argument is inspired by the proof of \cite[Proposition 9]{GL15}.} we
have
$$\lim_{T\to\infty}\CS(\wh{\bbb}^E_T, \bbb^E_T\oplus\nabla^{\ker(\DD^{S\otimes E}),
\op})=0.$$
Thus (\ref{eq 3.1.35}) holds, and therefore so does (\ref{eq 3.1.32}).
\end{proof}

\subsection{Some properties of Cheeger--Chern--Simons class}\label{s 3.2}

In this subsection we prove some properties of the Cheeger--Chern--Simons class that are
needed to prove Theorem \ref{thm 3.3.1}. These properties roughly say that the
Cheeger--Chern--Simons classes of a complex flat vector bundle with flat connection with
respect to different gradings are equal.

\begin{lemma}\label{lemma 3.2.1}
Let $(E, v, \nabla^E)$ be a flat $\Z$-graded cochain complex of the form
(\ref{eq 2.2.8}). If $\rk(E^+)=\rk(E^-)$, then
\begin{equation}\label{eq 3.2.1}
\sum_{k=0}^m(-1)^k\CCS(E^k, \nabla^k)=\CCS(E, \nabla^E),
\end{equation}
where the left-hand side of (\ref{eq 3.2.1}) is the Cheeger--Chern--Simons class of
$(E, \nabla^E)$ with respect to the $\Z$ grading given by (\ref{eq 2.2.9}) and the
right-hand side of (\ref{eq 3.2.1}) is the Cheeger--Chern--Simons class of $(E,
\nabla^E)$ with respect to the $\Z_2$ grading given by (\ref{eq 2.2.10}).
\end{lemma}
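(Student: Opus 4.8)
The plan is to compare the two sides of \eqref{eq 3.2.1} directly through their defining Chern--Simons representatives. Recall that the left-hand side is, by \eqref{eq 2.2.9},
$$\sum_{k=0}^m(-1)^k\CCS(E^k, \nabla^k)=\sum_{k=0}^m(-1)^k\bigg[\frac{1}{k_0}\CS(k_0\nabla^k, \nabla^{k_0E^k}_0)\bigg]\mod\Q,$$
where $k_0\in\N$ is a common integer making each $k_0E^k$ trivial (such a common $k_0$ exists since we may take the product of the individual ones). Using the additivity \eqref{eq 2.1.10} of the Chern--Simons form under direct sum, and the definition $\CS(\nabla^E_1, \nabla^E_0):=\sum_k(-1)^k\CS(\nabla^k_1, \nabla^k_0)$ for $\Z$-graded cochain complexes, this rearranges to $\big[\frac{1}{k_0}\CS(k_0\nabla^+, \nabla^{k_0E^+}_0)\big]-\big[\frac{1}{k_0}\CS(k_0\nabla^-, \nabla^{k_0E^-}_0)\big]\mod\Q$, where $\nabla^\pm$ is as in \eqref{eq 2.2.8} and $\nabla^{k_0E^\pm}_0$ is the induced trivial connection. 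The right-hand side is, by \eqref{eq 2.2.10} with the same $k_0$, the class $\big[\frac{1}{k_0}\CS(k_0\nabla^+, j^*k_0\nabla^-)\big]\mod\Q$ for some isomorphism $j:k_0E^+\to k_0E^-$ (which exists since $\rk(E^+)=\rk(E^-)$ and $\ch(\nabla^E)$ vanishes, so $E^+$ and $E^-$ are stably isomorphic, hence after multiplying $k_0$ we can also arrange $k_0E^+\cong k_0E^-$).

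The key step is then to invoke the cocycle identity \eqref{eq 2.1.9}: working modulo $\Q$,
$$\frac{1}{k_0}\CS(k_0\nabla^+, j^*k_0\nabla^-)=\frac{1}{k_0}\CS(k_0\nabla^+, \nabla^{k_0E^+}_0)+\frac{1}{k_0}\CS(\nabla^{k_0E^+}_0, j^*k_0\nabla^-),$$
so it suffices to show $\frac{1}{k_0}\CS(\nabla^{k_0E^+}_0, j^*k_0\nabla^-)\equiv -\frac{1}{k_0}\CS(k_0\nabla^-, \nabla^{k_0E^-}_0)\pmod\Q$. Transporting everything to $k_0E^-$ via the isomorphism $j$ (under which $j^*k_0\nabla^-$ becomes $k_0\nabla^-$ and $\nabla^{k_0E^+}_0$ becomes a trivial connection $\nabla'_0$ on $k_0E^-$, since the pushforward of a trivial connection under a bundle isomorphism to a trivializable bundle is trivial), and using the antisymmetry \eqref{eq 2.1.8}, this reduces to
$$\frac{1}{k_0}\CS(k_0\nabla^-, \nabla'_0)\equiv\frac{1}{k_0}\CS(k_0\nabla^-, \nabla^{k_0E^-}_0)\pmod\Q,$$
i.e. to the statement that $\frac{1}{k_0}\CS(\nabla'_0, \nabla^{k_0E^-}_0)\in\Omega^{\odd}_\Q$. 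But the difference of two trivial connections on a trivial bundle is a gauge transformation $g:B\to\GL(k_0\rk(E^-),\C)$, and as recalled in Remark \ref{remark 2} the corresponding Chern--Simons form is $\ch^{\odd}(g)\in\Omega^{\odd}_\Q(B)$ — in fact we do not even need the $\frac1{k_0}$ factor here. Hence this term is killed by the mod $\Q$ reduction, and \eqref{eq 3.2.1} follows.

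I expect the main obstacle to be the bookkeeping of \emph{which} trivializations are being used and keeping the $\Q$-indeterminacy under control: the classes $\CCS(E^k,\nabla^k)$ are a priori defined with individual integers $k_k$ and individual trivial connections $\nabla^{k_kE^k}_0$, and one must first justify (using the independence of $\CCS$ on these choices, cited after \eqref{eq 2.2.1}) that a uniform $k_0$ and compatible trivial connections may be chosen so that the additivity \eqref{eq 2.1.10} applies cleanly. A secondary point to be careful about is that $\CS(\nabla^{kF}_0,\nabla')$ for two trivial connections lies in $\Omega^{\odd}_\Q$ and not merely in $\frac1k\Omega^{\odd}_\Q$; this is exactly the rational-periods statement underlying the well-definedness of $\CCS$, so it is available to us, but it should be stated explicitly since it is what makes the two gradings' classes agree rather than merely agree up to a rational multiple. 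Once these points are pinned down, the computation is a short chain of \eqref{eq 2.1.8}, \eqref{eq 2.1.9}, \eqref{eq 2.1.10} together with Remark \ref{remark 2}.
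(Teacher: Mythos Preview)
Your proposal is correct and follows essentially the same route as the paper: pass to a common integer so that all the $k_0E^k$ are trivial, use additivity \eqref{eq 2.1.10} to assemble the even/odd parts, then apply the cocycle identity \eqref{eq 2.1.9} and Remark~\ref{remark 2} to compare $\CS(k_0\nabla^+, j^*k_0\nabla^-)$ with $\CS(k_0\nabla^+,\nabla^{k_0E^+}_0)-\CS(k_0\nabla^-,\nabla^{k_0E^-}_0)$ modulo $\Omega^{\odd}_\Q$. The only cosmetic difference is that the paper tracks the passage from the individual $\ell_k$ to their least common multiple explicitly (producing the extra $\ch^{\odd}(g_k)$ terms in \eqref{eq 3.2.4}), whereas you invoke the independence of $\CCS$ on $k$ up front; both are fine.
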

\begin{proof}
Consider $E\to X$ as a $\Z$-graded complex flat vector bundle with $\Z$-graded flat
connection $\nabla^E$. Since $(\nabla^k)^2=0$ for each $0\leq k\leq m$, there exists
$\ell_k\in\N$ such that $\ell_kE^k\cong\ell_k\C^{r_k}$, where $r_k=\rk(E^k)$. By
(\ref{eq 2.2.1}) a differential form representative of $\CCS(E^k, \nabla^k)$ is given
by
$$\frac{1}{\ell_k}\CS(\nabla^{\ell_kE^k}_0, \ell_k\nabla^k),$$
where $\nabla^{\ell_kE^k}_0$ is a trivial connection on $\ell_kE^k\to X$. Let $\ell$
be the least common multiple of $\ell_0, \ldots, \ell_m$. Then there exist unique $d_0,
\ldots, d_m\in\N$ such that $\ell=d_k\ell_k$ for each $0\leq k\leq m$. Since
\begin{equation}\label{eq 3.2.2}
\ell E^k=d_k\ell_kE^k\cong d_k\ell_k\C^{r_k}=\ell\C^{r_k},
\end{equation}
it follows from (\ref{eq 2.1.9}) and (\ref{eq 2.1.8}) that
\begin{equation}\label{eq 3.2.3}
\begin{split}
\frac{1}{\ell_k}\CS(\nabla^{\ell_kE^k}_0, \ell_k\nabla^k)&=\frac{1}{\ell}\CS(d_k
\nabla^{\ell_kE^k}_0, d_k\ell_k\nabla^k)\\
&=\frac{1}{\ell}\CS(d_k\nabla^{\ell_kE^k}_0, \nabla^{\ell E^k}_0)+\frac{1}{\ell}\CS
(\nabla^{\ell E^k}_0, \ell\nabla^k),
\end{split}
\end{equation}
where $\nabla^{\ell E^k}_0$ is a trivial connection on $\ell E^k\to X$. By Remark
\ref{remark 2} we have
\begin{equation}\label{eq 3.2.4}
\CS(d_k\nabla^{\ell_kE^k}_0, \nabla^{\ell E^k}_0)=\ch^{\odd}(g_k)\in\Omega^{\odd}_\Q
(X; \C)
\end{equation}
for some smooth map $g_k:X\to\GL(\ell r_k; \C)$. By (\ref{eq 3.2.3}) and (\ref{eq 3.2.4})
we have
\begin{equation}\label{eq 3.2.5}
\sum_{k=0}^m\frac{(-1)^k}{\ell_k}\CS(\nabla^{\ell_kE^k}_0, \ell_k\nabla^k)=\sum_{k=0}^m
\frac{(-1)^k}{\ell}\CS(\nabla^{\ell E^k}_0, \ell\nabla^k)+\sum_{k=0}^m\frac{(-1)^k}{\ell}
\ch^{\odd}(g_k).
\end{equation}
Recall that the connections $\nabla^\pm$ on $E^\pm\to X$ are defined by (\ref{eq 2.2.8}).
Note that
$$\nabla^{\ell E^+}_0:=\bigoplus_k\nabla^{\ell E^{2k}}_0,\qquad\nabla^{\ell E^-}_0:=
\bigoplus_k\nabla^{\ell E^{2k+1}}_0$$
are trivial connections on $\ell E^\pm\to X$ respectively. By (\ref{eq 2.1.9}) the first
term of the right-hand side of (\ref{eq 3.2.5}) becomes
\begin{equation}\label{eq 3.2.6}
\sum_{k=0}^m\frac{(-1)^k}{\ell}\CS(\nabla^{\ell E^k}_0, \ell\nabla^k)=\frac{1}{\ell}\CS
(\nabla^{\ell E^+}_0, \ell\nabla^+)-\frac{1}{\ell}\CS(\nabla^{\ell E^-}_0, \ell\nabla^-).
\end{equation}
Since $\rk(E^+)=\rk(E^-)$, (\ref{eq 3.2.2}) induces an isomorphism $j:\ell E^+\to\ell E^-$.
Since $j^*\nabla^{\ell E^-}_0=\nabla^{\ell E^+}_0$, by (\ref{eq 2.1.8}) we have
\begin{equation}\label{eq 3.2.7}
\begin{split}
&\frac{1}{\ell}\CS(\nabla^{\ell E^+}_0, \ell\nabla^+)\\
=&\frac{1}{\ell}\CS(\nabla^{\ell E^+}_0, j^*\ell\nabla^-)+\frac{1}{\ell}\CS(j^*\ell\nabla^-,
\ell\nabla^+)\\
=&\frac{1}{\ell}\CS(\nabla^{\ell E^+}_0, j^*\ell\nabla^-)+\frac{1}{\ell}\CS(j^*\ell\nabla^-,
j^*\nabla^{\ell E^-}_0)+\frac{1}{\ell}\CS(j^*\nabla^{\ell E^-}_0, \nabla^{\ell E^+}_0)\\
=&\frac{1}{\ell}\CS(\nabla^{\ell E^+}_0, j^*\ell\nabla^-)+\frac{1}{\ell}\CS(j^*\ell\nabla^-,
j^*\nabla^{\ell E^-}_0).
\end{split}
\end{equation}
By (\ref{eq 3.2.6}) and (\ref{eq 3.2.7}), (\ref{eq 3.2.5}) becomes
\begin{equation}\label{eq 3.2.8}
\sum_{k=0}^m\frac{(-1)^k}{\ell_k}\CS(\nabla^{\ell_kE^k}_0, \ell_k\nabla^k)=\frac{1}{\ell}
\CS(j^*\ell\nabla^-, \ell\nabla^+)+\sum_{k=0}^m\frac{(-1)^k}{\ell}\ch^{\odd}(g_k).
\end{equation}
Since the mod $\Q$ reduction of the de Rham class of the left-hand side of
(\ref{eq 3.2.8}) is the left-hand side of (\ref{eq 3.2.1}), and the same is true for
the right-hand side of (\ref{eq 3.2.8}) and (\ref{eq 3.2.1}), it follows that
(\ref{eq 3.2.1}) holds.
\end{proof}

The following lemma follows from (\ref{eq 2.1.10}) and (\ref{eq 2.1.14}).
\begin{lemma}\label{lemma 3.2.3}
Let $F^\pm\to X$ be a complex flat vector bundle with flat connection $\nabla^\pm$.
Define $F:=F^+\oplus F^-$ and $\nabla^F:=\nabla^+\oplus\nabla^-$. If $F\to X$ and
$\nabla^F$ are ungraded direct sums, then
\begin{equation}\label{eq 3.2.9}
\CCS(F, \nabla^F)=\CCS(F^+, \nabla^+)+\CCS(F^-, \nabla^-).
\end{equation}
If $\rk(F^+)=\rk(F^-)$ and $F\to X$, $\nabla^F$ are $\Z_2$-graded direct sums, then
\begin{equation}\label{eq 3.2.10}
\CCS(F, \nabla^F)=\CCS(F^+, \nabla^+)-\CCS(F^-, \nabla^-).
\end{equation}
\end{lemma}

Note that (\ref{eq 3.2.10}) is not a consequence of Lemma \ref{lemma 3.2.1} since the
$\Z_2$ grading of $(F, \nabla^F)$ does not come from a $\Z$-graded flat cochain complex.

\subsection{The real part of the Riemann--Roch--Grothendieck theorem for complex flat
vector bundles}\label{s 3.3}

In this subsection we first prove a $\Z_2$-graded version of (\ref{eq 1.0.2}) for $\dim(Z)$
even at the differential form level (Theorem \ref{thm 3.3.1}). Then we apply Theorem
\ref{thm 3.3.1} and a result by Bismut \cite[Theorem 3.12]{B05} to deduce (\ref{eq 1.0.2})
for $\dim(Z)$ even.

Let $n\in\N$. In the proof of Theorem \ref{thm 3.3.1} we will use the following notations.
Write $g^n$ for a trivial metric on the trivial bundle $\C^n\to B$. Let $d^n$ be a trivial
unitary connection on $\C^n\to B$. Write $\un{\C^n}\to B$ for the $\Z_2$-graded trivial
bundle $\un{\C^n}:=\C^n\oplus\C^n$ and $\un{g^n}$ for the $\Z_2$-graded trivial metric
defined by $\un{g^n}=g^n\oplus g^n$. Let $\un{d^n}$ be a $\Z_2$-graded trivial unitary
connection on $\un{\C^n}\to B$ defined by $\un{d^n}=d^n\oplus d^n$.
\begin{thm}\label{thm 3.3.1}
Let $\pi:X\to B$ be a submersion with closed fibers $Z$ with $\dim(Z)$ even and $F\to X$
a $\Z_2$-graded complex flat vector bundle of virtual rank zero with $\Z_2$-graded flat
connection $\nabla^F=\nabla^+\oplus\nabla^-$. Put a $\Z_2$-graded Hermitian metric
$g^F=g^+\oplus g^-$ on $F\to X$, and the induced Hermitian metric $g^{H(Z, F^\pm|_Z)}$
on $H(Z, F^\pm|_Z)\to B$. Define a unitary connection $\nabla^{\pm, u}$ on $F^\pm\to X$
with respect to $g^\pm$ by (\ref{eq 2.2.3}) and a $\Z_2$-graded unitary connection
$\nabla^{H(Z, F^\pm|_Z), u}$ on $H(Z, F^\pm|_Z)\to B$ with respect to
$g^{H(Z, F^\pm|_Z)}$ by (\ref{eq 2.3.7}) respectively. Then there exist $k, N, M\in\N$,
a smooth isometric isomorphism $\wt{j}:kF^+\to kF^-$, and a smooth $\Z_2$-graded
isometric isomorphism $\wt{f}:kH(Z, F^+|_Z)\oplus\un{\C^M}\to kH(Z, F^-|_Z)\oplus
\un{\C^N}$ such that
\begin{equation}\label{eq 3.3.1}
\CS(\wt{f}^*(k\nabla^{H(Z, F^-|_Z), u}\oplus\un{d^N}), k\nabla^{H(Z, F^+|_Z), u}\oplus
\un{d^M})=\int_{X/B}e(\nabla^{T^VX})\wedge\CS(\wt{j}^*k\nabla^{-, u}, k\nabla^{+, u})
\end{equation}
in \dis{\frac{\Omega^{\odd}(B)}{\Omega^{\odd}_\Q(B)}}.
\end{thm}
\begin{proof}
Let $F\to X$ be a $\Z_2$-graded complex flat vector bundle of virtual rank zero with
$\Z_2$-graded flat connection $\nabla^F=\nabla^+\oplus\nabla^-$. Then there exist
$k_1\in\N$ and a smooth bundle isomorphism $j:k_1F^+\to k_1F^-$.

Let $\bullet\in\set{\even, \odd}$. Define $n^\bullet_\pm=\rk(H^\bullet(Z, F^\pm|_Z))$.
Since $H^\bullet(Z, F^\pm|_Z)\to B$ is a complex flat vector bundle with flat connection
$\nabla^{H^\bullet(Z, F^\pm|_Z)}$, there exist $\ell^\bullet_\pm\in\N$ and smooth bundle
isomorphisms $h^\bullet_\pm:\ell^\bullet_\pm H^\bullet(Z, F^\pm|_Z)\to\ell^\bullet_\pm
\C^{n^\bullet_\pm}$. Let $k\in\N$ be the least common multiple of
\begin{equation}\label{eq 3.3.2}
k_1,\quad\ell^{\even}_+,\quad\ell^{\even}_-,\quad\ell^{\odd}_+\textrm{ and }\ell^{\odd}_-.
\end{equation}
We still denote by
\begin{equation}\label{eq 3.3.3}
j:kF^+\to kF^-\textrm{ and }h^\bullet_\pm:kH^\bullet(Z, F^\pm|_Z)\to k\C^{n^\bullet_\pm}
\end{equation}
the resulting smooth bundle isomorphisms. Note that our choice of $k$ guarantees that
smooth bundle isomorphisms in (\ref{eq 3.3.3}) hold simultaneously.

Put a $\Z_2$-graded Hermitian metric $g^F=g^+\oplus g^-$ on $F\to X$. Define a unitary
connection $\nabla^{\pm, u}$ on $F^\pm\to X$ by (\ref{eq 2.2.3}) with respect to $g^\pm$.
By (\ref{eq 2.2.11}) there exists $f\in\Aut(kF^+)$ such that $kg^+=f^*j^*kg^-$. Write
$\wt{j}=j\circ f$. Thus $\wt{j}:kF^+\to kF^-$ is a smooth isometric isomorphism. On the
other hand, since $\wt{j}^*k\nabla^-$ is a flat connection on $kF^+\to X$, it follows that
\begin{equation}\label{eq 3.3.4}
\wt{j}:(kF^+, \wt{j}^*k\nabla^-)\to(kF^-, k\nabla^-)
\end{equation}
is an isomorphism of complex flat vector bundles.\footnote{We will use the different flat
structures $(kF^+, k\nabla^+)$ and $(kF^+, \wt{j}^*k\nabla^-)$ separately.} Write $\ff=
kF^+$ and $g^{\ff}=kg^+$.  Let $\nabla^{\ff}_t$ be a smooth curve of connections on
$\ff\to X$ such that
$$\nabla^{\ff}_1=k\nabla^+,\qquad\nabla^{\ff}_0=\wt{j}^*k\nabla^-.$$
Consider the pullback of $\ff\to X$ by $p_X$:
\cdd{\f @>>> \ff \\ @VVV @VVV \\ \wt{X} @>>p_X> X}
where $\f:=p_X^*\ff$. Define a connection $\nabla^{\f}$ on $\f\to\wt{X}$ by
$$\nabla^{\f}=\nabla^{\ff}_t+dt\wedge\frac{\partial}{\partial t}.$$
Note that $\nabla^{\f}$ is not necessarily flat. Define a Hermitian metric on $\f\to\wt{X}$
by $g^{\f}=p_X^*g^{\ff}$, and the unitary connection $\nabla^{\f, u}$ on $\f\to\wt{X}$ by
(\ref{eq 2.2.3}). Note that $\wt{j}^*k\nabla^{-, u}$ is a unitary connection on $\ff\to X$
with respect to $g^{\ff}$. Since
$$\nabla^{\ff, u}_1=\nabla^{\ff}_1+\frac{1}{2}(g^{\ff})^{-1}(\nabla^{\ff}_1g^{\ff})=
k\nabla^++\frac{1}{2}(kg^+)^{-1}(k\nabla^+(kg^+))=k\nabla^{+, u}$$
and
\begin{displaymath}
\begin{split}
\nabla^{\ff, u}_0&=\nabla^{\ff}_0+\frac{1}{2}(g^{\ff})^{-1}(\nabla^{\ff}_0g^{\ff})\\
&=\wt{j}^*k\nabla^-+\frac{1}{2}(kg^+)^{-1}(\wt{j}^*k\nabla^-(kg^+))\\
&=\wt{j}^*\bigg(k\nabla^-+\frac{1}{2}(kg^-)^{-1}(k\nabla^-(kg^-))\bigg)\\
&=\wt{j}^*k\nabla^{-, u},
\end{split}
\end{displaymath}
it follows that
$$i_{X, 1}^*\nabla^{\f, u}=\nabla^{\ff, u}_1=k\nabla^{+, u},\qquad i_{X, 0}^*
\nabla^{\f, u}=\nabla^{\ff, u}_0=\wt{j}^*k\nabla^{-, u}.$$

Temporarily assume that the fibers $Z$ are oriented and spin. Then the fibers of
$\wt{\pi}:\wt{X}\to\wt{B}$, denoted by $\wt{Z}$, are also oriented and spin. The geometric
data on $\wt{\pi}:\wt{X}\to\wt{B}$ is given by $(g^{\f}, \nabla^{\f, u}, T^H\wt{X},
g^{T^V\wt{X}})$, where $T^H\wt{X}\to\wt{X}$ and $g^{T^V\wt{X}}$ are obtained by pulling
back a fixed choice of $T^HX\to X$ and $g^{T^VX}$ respectively. The infinite rank
$\Z_2$-graded complex vector bundle $\wt{\pi}_*^{\spin}(S(T^V\wt{X})^*\otimes\f)\to\wt{B}$
defined by (\ref{eq 2.4.1}) is equipped with a $\Z_2$-graded $L^2$-metric and a
$\Z_2$-graded unitary connection $\nabla^{\wt{\pi}_*^{\spin}(S(T^V\wt{X})^*\otimes\f), u}$
defined by (\ref{eq 2.4.2}). Define the spin Dirac operator $\DD^{S\wh{\otimes}(S^*\otimes
\f)}$ twisted by $S(T^V\wt{X})^*\otimes\f\to\wt{X}$ by (\ref{eq 2.4.0}), and define
$\wt{V}$ by (\ref{eq 2.3.9}). The perturbed twisted spin Dirac operator $\DD^{S\wh{\otimes}
(S^*\otimes\f)}+\wt{V}$ acting on
$$\Gamma(\wt{X}, S(T^V\wt{X})\wh{\otimes}(S(T^V\wt{X})^*\otimes\f))$$
can be considered as an odd self-adjoint element in
$$\Gamma(\wt{B}, \End^-(\wt{\pi}_*^{\spin}(S(T^V\wt{X})^*\otimes\f))).$$
We do not assume $\DD^{S\wh{\otimes}(S^*\otimes\f)}+\wt{V}$ satisfies Assumption
\ref{ass 1}. Let $\LLL\to\wt{B}$ be a $\Z_2$-graded complex vector bundle satisfying the
MF property with respect to $\DD^{S\wh{\otimes}(S^*\otimes\f)}+\wt{V}$. As in the proof
of Proposition \ref{prop 3.1.1}, write $L\to B$ for $i_{B, 1}^*\LLL\to B$, and note that
$i_{B, 1}^*\LLL\cong i_{B, 0}^*\LLL$.

Note that
\begin{equation}\label{eq 3.3.5}
S(T^VX)\wh{\otimes}S(T^VX)^*\cong\Lambda(T^VX)^*\otimes\C
\end{equation}
as $\Z_2$-graded complex vector bundles. By suppressing the notation $\otimes\C$ in
(\ref{eq 3.3.5}) we have
\begin{equation}\label{eq 3.3.6}
\begin{split}
S(T^VX)\wh{\otimes}(S(T^VX)^*\otimes\ff)&\cong(S(T^VX)\wh{\otimes}S(T^VX)^*)\otimes\ff\\
&\cong\Lambda(T^VX)^*\otimes\ff
\end{split}
\end{equation}
as $\Z_2$-graded complex vector bundles. By (\ref{eq 2.3.2}), (\ref{eq 2.4.1}) and
(\ref{eq 3.3.6}) we have $\pi^\Lambda_*\ff\cong\pi^{\spin}_*(S(T^VX)^*\otimes\ff)$, and
therefore
$$\Gamma(B, \pi^\Lambda_*\ff)\cong\Gamma(B, \pi^{\spin}_*(S(T^VX)^*\otimes\ff)).$$
Moreover, under the isomorphism (\ref{eq 3.3.5}) the Clifford multiplication on the
Clifford module $S(T^VX)\to X$ corresponds to the Clifford multiplication on the
Clifford module on $\Lambda(T^VX)^*\to X$ \cite[Proposition 3.5]{BGV}, and
\begin{equation}\label{eq 3.3.7}
\nabla^{S(T^VX)\wh{\otimes}S(T^VX)^*}=\nabla^{\Lambda(T^VX)^*}
\end{equation}
as $\Z_2$-graded unitary connections, where $\nabla^{S(T^VX)\wh{\otimes}S(T^VX)^*}$ is the
$\Z_2$-graded tensor product of $\nabla^{S(T^VX)}$ and $\nabla^{S(T^VX)^*}$.

Recall that $\DD^{S\wh{\otimes}(S^*\otimes\ff)}_1+V_1$ is defined in terms of, among other
things, $(\ff, \nabla^{\ff, u}_1)$, where $\nabla^{\ff, u}_1=k\nabla^{+, u}$. By
(\ref{eq 2.3.1}), (\ref{eq 2.4.1}) and (\ref{eq 3.3.7}) we have
\begin{equation}\label{eq 3.3.8}
\DD^{\Lambda\otimes\ff}_1=\DD^{S\wh{\otimes}(S^*\otimes\ff)}_1.
\end{equation}
Thus by (\ref{eq 2.3.8}) and (\ref{eq 3.3.8}) we have
\begin{equation}\label{eq 3.3.9}
\DD^{Z, \dr}_1=\DD^{\Lambda\otimes\ff}_1+V_1=\DD^{S\wh{\otimes}(S^*\otimes\ff)}_1+V_1.
\end{equation}
Since the family of kernels of $\DD_1^{Z, \dr}$ form a $\Z_2$-graded complex vector bundle,
the same is true for $\DD^{S\wh{\otimes}(S^*\otimes\ff)}_1+V_1$. Denoted by
$\ker(\DD^{S\wh{\otimes}(S^*\otimes\ff)}_1+V_1)\to B$ the resulting $\Z_2$-graded complex
vector bundle. Similarly, $\DD^{S\wh{\otimes}(S^*\otimes\ff)}_0+V_0$ is defined in terms
of, among other things, $(\ff, \nabla^{\ff, u}_0)$, where $\nabla^{\ff, u}_0=\wt{j}^*k
\nabla^{-, u}$. By a similar argument the family of the kernels of
$\DD^{S\wh{\otimes}(S^*\otimes\ff)}_0+V_0$ form a $\Z_2$-graded complex vector bundle,
denoted by $\ker(\DD^{S\wh{\otimes}(S^*\otimes\ff)}_0+V_0)\to B$.

Let $i\in\set{0, 1}$. Since $\ker(\DD^{S\wh{\otimes}(S^*\otimes\ff)}_i+V_i)\to B$ satisfies
the MF property with respect to $\DD^{S\wh{\otimes}(S^*\otimes\ff)}_i+V_i$, by
(\ref{eq 3.1.15}) there exist $\Z_2$-graded complex vector bundles $H\to B$ and $W_i\to B$
of the form $H^+=H^-$ and $W_i^+=W_i^-$ such that
\begin{equation}\label{eq 3.3.10}
\ker(\DD^{S\wh{\otimes}(S^*\otimes\ff)}_0+V_0)\oplus W_0\cong L\oplus H\cong\ker
(\DD^{S\wh{\otimes}(S^*\otimes\ff)}_1+V_1)\oplus W_1
\end{equation}
as $\Z_2$-graded complex vector bundles. By (\ref{eq 2.3.5}) and (\ref{eq 3.3.9}) we have
\begin{equation}\label{eq 3.3.11}
\ker(\DD^{S\wh{\otimes}(S^*\otimes\ff)}_1+V_1)\cong H(Z, \ff|_Z)=H(Z, kF^+|_Z)
\end{equation}
as $\Z_2$-graded complex vector bundles, where the flat connection $\nabla^{H(Z, kF^+|_Z)}$
on $H(Z, kF^+|_Z)\to B$ in (\ref{eq 3.3.11}) is defined in terms of $\nabla^{\ff}_1=
k\nabla^+$. By a similar argument we have
\begin{equation}\label{eq 3.3.12}
\ker(\DD^{S\wh{\otimes}(S^*\otimes\ff)}_0+V_0)\cong H(Z, \ff|_Z, \nabla^{\ff}_0)=H(Z, kF^+|_Z,
\wt{j}^*k\nabla^-)
\end{equation}
as $\Z_2$-graded complex vector bundles. In (\ref{eq 3.3.12}) the notation for the cohomology
bundle is to emphasize that it is defined by the corresponding flat connection. Since
$(kF^+, \wt{j}^*k\nabla^-)\cong(kF^-, k\nabla^-)$ as complex flat vector bundles by
(\ref{eq 3.3.4}), (\ref{eq 3.3.12}) becomes
\begin{equation}\label{eq 3.3.13}
\ker(\DD^{S\wh{\otimes}(S^*\otimes\ff)}_0+V_0)\cong H(Z, kF^+|_Z, \wt{j}^*k\nabla^-)\cong
H(Z, kF^-|_Z)
\end{equation}
as $\Z_2$-graded complex vector bundles. Since for any complex flat vector bundle $E\to X$
with flat connection $\nabla^E$ and any $k\in\N$,
$$(H(Z, kE|_Z), \nabla^{H(Z, kE|_Z)})\cong(kH(Z, E|_Z), k\nabla^{H(Z, E|_Z)})$$
as $\Z$-graded complex flat vector bundles, it follows from (\ref{eq 3.3.11}),
(\ref{eq 3.3.12}) and (\ref{eq 3.3.13}) that (\ref{eq 3.3.10}) becomes
\begin{equation}\label{eq 3.3.14}
kH(Z, F^-|_Z)\oplus W_0\cong L\oplus H\cong kH(Z, F^+|_Z)\oplus W_1.
\end{equation}
Consider the following bundle isomorphism of the even part of (\ref{eq 3.3.14}):
\begin{equation}\label{eq 3.3.15}
kH^{\even}(Z, F^-|_Z)\oplus W_0^+\cong kH^{\even}(Z, F^+|_Z)\oplus W_1^+.
\end{equation}
Since $B$ is assumed to be compact, there exists a complex vector bundle $V\to B$ such
that $W_1^+\oplus V\cong\C^m$ for some $m\in\N$. By direct summing $V\to B$ and $\ell$
copies of $\C^m\to B$ on both sides of (\ref{eq 3.3.15}), where $\ell\in\N$, it becomes
\begin{equation}\label{eq 3.3.16}
kH^{\even}(Z, F^-|_Z)\oplus\wt{W_0^+}\cong kH^{\even}(Z, F^+|_Z)\oplus\C^{(\ell+1)m},
\end{equation}
where $\wt{W_0^+}:=W_0^+\oplus V\oplus\C^{\ell m}$. By choosing and fixing a sufficiently
large $\ell$, we may assume that $2\rk(\wt{W_0^+})\geq\dim(B)$. Since $kH^{\even}
(Z, F^\pm|_Z)\cong k\C^{n^{\even}_\pm}\cong\C^{kn^{\even}_\pm}$ by (\ref{eq 3.3.3}),
(\ref{eq 3.3.16}) becomes
$$\C^{kn^{\even}_-}\oplus\wt{W_0^+}\cong\C^{kn^{\even}_++(\ell+1)m}.$$
Thus $kn^{\even}_-+\rk(\wt{W_0^+})=kn^{\even}_++(\ell+1)m$. Since $\C^{kn^{\even}_+
+(\ell+1)m}\cong\C^{kn^{\even}_-}\oplus\C^{\rk(\wt{W_0^+})}$, it follows that
$$\C^{kn^{\even}_-}\oplus\wt{W_0^+}\cong\C^{kn^{\even}_-}\oplus\C^{\rk(\wt{W_0^+})}.$$
By \cite[Theorem 1.5 of Chapter 9]{H94} (see also Remark \ref{remark 1}) we have
$\wt{W_0^+}\cong\C^{\rk(\wt{W_0^+})}$. Since $W_0^-=W_0^+$ and $W_1^-=W_1^+$, it follows
that
$$\wt{W_0^-}:=W_0^-\oplus V\oplus\C^{\ell m}=\wt{W_0^+}\cong\C^{\rk(\wt{W_0^+})}.$$
The above argument shows that there exist sufficiently large $N, M\in\N$ such that the
following diagram commutes
\begin{equation}\label{eq 3.3.17}
\xymatrix{ & L\oplus\wh{H} \ar[dl]_\cong \ar[dr]^h & \\ kH(Z, F^-|_Z)\oplus\un{\C^N} & &
kH(Z, F^+|_Z) \oplus\un{\C^M} \ar[ll]^{f=f_0\oplus f_1}_\cong}
\end{equation}
where $\wh{H}\to B$ is the resulting $\Z_2$-graded complex vector bundle, and $f$, $h$
are the resulting smooth $\Z_2$-graded bundle isomorphisms.

Put $\Z_2$-graded trivial metrics $\un{g^N}$ and $\un{g^M}$ and $\Z_2$-graded trivial
unitary connections $\un{d^N}$ and $\un{d^M}$ on $\un{\C^N}\to B$ and $\un{\C^M}\to B$
respectively. Since
$$kg^{H^{\even}(Z, F^+|_Z)}\oplus g^M,\quad f_0^*(kg^{H^{\even}(Z, F^-|_Z)}\oplus g^N)$$
are Hermitian metrics on $kH^{\even}(Z, F^+|_Z)\oplus\C^M\to B$, by (\ref{eq 2.2.11})
there exists a smooth isometric isomorphism
\begin{equation}\label{eq 3.3.18}
\wt{f_0}:kH^{\even}(Z, F^+|_Z)\oplus\C^M\to kH^{\even}(Z, F^-|_Z)\oplus\C^N.
\end{equation}
Similarly, denote by
\begin{equation}\label{eq 3.3.19}
\wt{f_1}:kH^{\odd}(Z, F^+|_Z)\oplus\C^M\to kH^{\odd}(Z, F^-|_Z)\oplus\C^N
\end{equation}
the corresponding smooth isometric isomorphism. Then
$$\wt{f}:=\wt{f_0}\oplus\wt{f_1}:kH(Z, F^+|_Z)\oplus\un{\C^M}\to kH(Z, F^-|_Z)\oplus
\un{\C^N}$$
is a smooth $\Z_2$-graded isometric isomorphism, and $\wt{f}^*(k\nabla^{H(Z, F^-|_Z), u}
\oplus\un{d^N})$ is a $\Z_2$-graded unitary connection on $kH(Z, F^+|_Z)\oplus\un{\C^M}
\to B$ with respect to $kg^{H(Z, F^+|_Z)}\oplus\un{g^M}$.

Note that $h^*(k\nabla^{H(Z, F^+|_Z), u}\oplus\un{d^M})$ and $h^*\wt{f}^*
(k\nabla^{H(Z, F^-|_Z), u}\oplus\un{d^N})$ are $\Z_2$-graded unitary connections on
$L\oplus\wh{H}\to B$ with respect to $h^*(kg^{H(Z, F^+|_Z)}\oplus\un{g^M})$. Define a
$\Z_2$-graded complex vector bundle $\HH\to\wt{B}$ by $\HH=p_B^*\wh{H}$, and a unitary
connection $\nabla^{\LLL\oplus\HH}$ on $\LLL\oplus\HH\to\wt{B}$ by (\ref{eq 2.1.6}) such
that
\begin{displaymath}
\begin{split}
i_{B, 1}^*\nabla^{\LLL\oplus\HH}&=h^*(k\nabla^{H(Z, F^+|_Z), u}\oplus\un{d^M}),\\
i_{B, 0}^*\nabla^{\LLL\oplus\HH}&=h^*\wt{f}^*(k\nabla^{H(Z, F^-|_Z), u}\oplus\un{d^N}).
\end{split}
\end{displaymath}

The term $\wh{A}(\nabla^{T^V\wt{X}})\wedge\ch(\nabla^{\e})$ in (\ref{eq 3.1.19}) becomes
$$\wh{A}(\nabla^{T^V\wt{X}})\wedge\ch(\nabla^{S(T^V\wt{X})^*\otimes\f, u}),$$
where $\nabla^{S(T^V\wt{X})^*\otimes\f, u}$ is the tensor product of
$\nabla^{S(T^V\wt{X})^*}$ and $\nabla^{\f, u}$. By \cite[(8.30)]{AB68} (see also
\cite[(3.46)]{B05} and \cite[Proposition 11.24 in p.328]{LM89}) we have
\begin{equation}\label{eq 3.3.20}
\begin{split}
&\wh{A}(\nabla^{T^V\wt{X}})\wedge\ch(\nabla^{S(T^V\wt{X})^*\otimes\f, u})\\
=&\wh{A}(\nabla^{T^V\wt{X}})\wedge\ch(\nabla^{S(T^V\wt{X})^*})\wedge\ch(\nabla^{\f, u})\\
=&\wh{A}(\nabla^{T^V\wt{X}})\wedge(\ch(\nabla^{S(T^V\wt{X})^*, +})-\ch(\nabla^{S(T^V
\wt{X})^*, -}))\wedge\ch(\nabla^{\f, u})\\
=&\wh{A}(\nabla^{T^V\wt{X}})\wedge\frac{e(\nabla^{T^V\wt{X}})}{\wh{A}(\nabla^{T^V\wt{X}})}
\wedge\ch(\nabla^{\f, u})\\
=&e(\nabla^{T^V\wt{X}})\wedge\ch(\nabla^{\f, u}).
\end{split}
\end{equation}
By (\ref{eq 3.1.5}) and (\ref{eq 3.3.20}) we have
\begin{equation}\label{eq 3.3.21}
\begin{split}
&\wh{\eta}^{S(T^VX)^*\otimes\ff}(kg^+, k\nabla^{+, u}, T^HX, g^{T^VX}, \ker(\DD^{S
\wh{\otimes}S^*\otimes\ff}_1+V_1))\\
&\qquad-\wh{\eta}^{S(T^VX)^*\otimes\ff}(kg^+, \wt{j}^*k\nabla^{-, u}, T^HX, g^{T^VX},
\ker(\DD^{S\wh{\otimes}S^*\otimes\ff}_0+V_0))\\
=&\int_{X/B}e(\nabla^{T^VX})\wedge\CS(\wt{j}^*k\nabla^{-, u}, k\nabla^{+, u})\\
&-\CS(h^*\wt{f}^*(k\nabla^{H(Z, F^-|_Z), u}\oplus\un{d^N}), h^*(k\nabla^{H(Z, F^+|_Z), u}
\oplus\un{d^M}))
\end{split}
\end{equation}
in \dis{\frac{\Omega^{\odd}(B)}{\im(d)}}. Since $h$ covers the identity map $\id_B$,
it follows from (\ref{eq 2.2.10}) that
\begin{equation}\label{eq 3.3.22}
\begin{split}
&\CS(h^*(\wt{f}^*k\nabla^{H(Z, F^-|_Z), u}\oplus\un{d^N}), h^*(k\nabla^{H(Z, F^+|_Z), u}
\oplus\un{d^M}))\\
=&\CS(\wt{f}^*(k\nabla^{H(Z, F^-|_Z), u}\oplus\un{d^N}), k\nabla^{H(Z, F^+|_Z), u}\oplus
\un{d^M})
\end{split}
\end{equation}
in \dis{\frac{\Omega^{\odd}(B)}{\im(d)}}. On the other hand, by (\ref{eq 3.1.35}) we
have
\begin{displaymath}
\begin{split}
&\wh{\eta}^{S(T^VX)^*\otimes\ff}(kg^+, k\nabla^{+, u}, T^HX, g^{T^VX}, \ker(\DD^{S
\wh{\otimes}S^*\otimes\ff}_1+V_1))\\
=&\wt{\eta}^{S(T^VX)^*\otimes\ff}(kg^+, k\nabla^{+, u}, T^HX, g^{T^VX})
\end{split}
\end{displaymath}
in \dis{\frac{\Omega^{\odd}(B)}{\im(d)}}. By (\ref{eq 3.3.7}) and (\ref{eq 3.3.6}) we
have
\begin{displaymath}
\begin{split}
&\nabla^{\Lambda(T^VX)^*}\otimes\id_{\Gamma(X, \ff)}+\id_{\Gamma(X, \Lambda(T^VX)^*)}
\otimes\nabla^{\ff, u}_1\\
=&\nabla^{S(T^VX)\wh{\otimes}S(T^VX)^*}\otimes\id_{\Gamma(X, \ff)}+\id_{\Gamma(X, S(T^VX)
\wh{\otimes}S(T^VX)^*)}\otimes\nabla^{\ff, u}_1\\
=&\nabla^{S(T^VX)}\otimes\id_{\Gamma(X, S(T^VX)^*\otimes\ff)}+\id_{\Gamma(X, S(T^VX))}
\otimes\nabla^{S(T^VX)^*\otimes\ff, u},
\end{split}
\end{displaymath}
where $\nabla^{S(T^VX)^*\otimes\ff, u}$ is the tensor product connection of
$\nabla^{S(T^VX)^*}$ and $\nabla^{\ff, u}_1=k\nabla^{+, u}$. It follows from
(\ref{eq 2.3.4}), (\ref{eq 2.4.2}) and above that
\begin{equation}\label{eq 3.3.23}
\nabla^{\pi^\Lambda_*\ff, u}=\nabla^{\pi^{\spin}_*(S(T^VX)^*\otimes\ff), u}.
\end{equation}
By (\ref{eq 3.3.8}) and (\ref{eq 3.3.23}) we see that $\bbb^{S(T^VX)^*\otimes\ff}=
\bbb^{\dr}$. By (\ref{eq 2.3.12}) we have
$$\wt{\eta}^{S(T^VX)^*\otimes\ff}(kg^+, k\nabla^{+, u}, T^HX, g^{T^VX})=\wt{\eta}^{\dr}
=0.$$
A similar argument shows that
\begin{displaymath}
\begin{split}
&\wh{\eta}^{S(T^VX)^*\otimes\ff}(kg^+, \wt{j}^*k\nabla^{-, u}, T^HX, g^{T^VX},
\ker(\DD^{S\wh{\otimes}S^*\otimes\ff}_0+V_0))\\
=&\wt{\eta}^{S(T^VX)^*\otimes\ff}(kg^+, \wt{j}^*k\nabla^{-, u}, T^HX, g^{T^VX})=
\wt{\eta}^{\dr}=0.
\end{split}
\end{displaymath}
By (\ref{eq 3.3.22}), (\ref{eq 3.3.21}) becomes
\begin{equation}\label{eq 3.3.24}
\CS(\wt{f}^*(k\nabla^{H(Z, F^-|_Z), u}\oplus\un{d^N}), k\nabla^{H(Z, F^+|_Z), u}\oplus
\un{d^M})=\int_{X/B}e(\nabla^{T^VX})\wedge\CS(\wt{j}^*k\nabla^{-, u}, k\nabla^{+, u})
\end{equation}
in \dis{\frac{\Omega^{\odd}(B)}{\im(d)}}. Since all the above computations are local, it
follows that (\ref{eq 3.3.24}) holds without assuming the fibers $Z$ are oriented and
spin. Thus (\ref{eq 3.3.1}) holds.
\end{proof}

The following corollary is a $\Z_2$-graded version of (\ref{eq 1.0.2}) for $\dim(Z)$ even.
\begin{coro}\label{coro 3.3.3}
Let $\pi:X\to B$ be a submersion with closed fibers $Z$ with $\dim(Z)$ even, and $F\to X$
a $\Z_2$-graded complex flat vector bundle of virtual rank zero with $\Z_2$-graded flat
connection $\nabla^{F}=\nabla^+\oplus\nabla^-$. By putting a $\Z_2$-graded Hermitian
metric $g^F$ on $F\to X$ and the induced $\Z$-graded Hermitian metric $g^{H(Z, F|_Z)}$
on $H(Z, F|_Z)\to B$ we have
\begin{equation}\label{eq 3.3.25}
\re(\CCS(H(Z, F|_Z), \nabla^{H(Z, F|_Z)}))=\int_{X/B}e(T^VX)\cup\re(\CCS(F, \nabla^{F}))
\end{equation}
in $H^{\odd}(B; \R/\Q)$.
\end{coro}

In the following proof we adopt the notations in the proof of Theorem \ref{thm 3.3.1}.
\begin{proof}
Let $F\to X$ be a $\Z_2$-graded complex flat vector bundle of virtual rank zero with
$\Z_2$-graded flat connection $\nabla^{F}=\nabla^+\oplus\nabla^-$. As in the proof of
Theorem \ref{thm 3.3.1} let $k\in\N$ be the least common multiple of the integers in
(\ref{eq 3.3.2}).

Since the $\Z_2$-grading of $H(Z, F|_Z)\to B$ is given by
\begin{displaymath}
\begin{split}
H(Z, F|_Z)^+&=H^{\even}(Z, F^+|_Z)\oplus H^{\odd}(Z, F^-|_Z),\\
H(Z, F|_Z)^-&=H^{\even}(Z, F^-|_Z)\oplus H^{\odd}(Z, F^+|_Z),
\end{split}
\end{displaymath}
it follows that
\begin{displaymath}
\begin{split}
&\rk(H(Z, F|_Z)^+)-\rk(H(Z, F|_Z)^-)\\
=&\rk(H^{\even}(Z, F^+|_Z))+\rk(H^{\odd}(Z, F^-|_Z))-\rk(H^{\even}(Z, F^-|_Z))\\
&\quad-\rk(H^{\odd}(Z, F^+|_Z))\\
=&\rk(H(Z, F^+|_Z))-\rk(H(Z, F^-|_Z))\\
=&\chi(Z)\rk(F^+)-\chi(Z)\rk(F^-)=0.
\end{split}
\end{displaymath}
Thus by Lemma \ref{lemma 3.2.3} we have
\begin{equation}\label{eq 3.3.26}
\begin{split}
&\re(\CCS(H(Z, F|_Z), \nabla^{H(Z, F|_Z)}))\\
=&\re(\CCS(H(Z, F|_Z)^+, \nabla^{H(Z, F|_Z)^+}))
-\re(\CCS(H(Z, F|_Z)^-, \nabla^{H(Z, F|_Z)^-}))\\
=&\re(\CCS(H^{\even}(Z, F^+|_Z), \nabla^{H^{\even}(Z, F^+|_Z)}))+\re(\CCS(H^{\odd}(Z,
F^-|_Z), \nabla^{H^{\odd}(Z, F^-|_Z)}))\\
&-\re(\CCS(H^{\even}(Z, F^-|_Z), \nabla^{H^{\even}(Z, F^-|_Z)}))-\re(\CCS(H^{\odd}(Z,
F^+|_Z), \nabla^{H^{\odd}(Z, F^+|_Z)})).
\end{split}
\end{equation}
Recall from (\ref{eq 3.3.3}) that the choice of $k$ guarantees the existence of the
smooth bundle isomorphisms $j:kF^+\to kF^-$ and $h^\bullet_\pm:kH^\bullet(Z, F^\pm)\to
\C^{kn^{\bullet}_\pm}$. Put a $\Z_2$-graded Hermitian metric $g^F$ on $F\to X$, and put
the induced $\Z$-graded Hermitian metric $g^{H^\bullet(Z, F^\pm|_Z)}$ on
$H^\bullet(Z, F^\pm|_Z)\to B$ as in Theorem \ref{thm 3.3.1}.

By (\ref{eq 2.1.14}), the left-hand side of (\ref{eq 3.3.1}) can be written as
\begin{equation}\label{eq 3.3.27}
\begin{split}
&\CS(\wt{f}^*(k\nabla^{H(Z, F^-|_Z), u}\oplus\un{d^N}), k\nabla^{H(Z, F^+|_Z), u}\oplus
\un{d^M})\\
=&\CS(\wt{f_0}^*(k\nabla^{H^{\even}(Z, F^-|_Z), u}\oplus d^N), k\nabla^{H^{\even}(Z,
F^+|_Z), u}\oplus d^M)\\
&\quad-\CS(\wt{f_1}^*(k\nabla^{H^{\odd}(Z, F^-|_Z), u}\oplus d^N), k\nabla^{H^{\odd}
(Z, F^+|_Z), u}\oplus d^M)
\end{split}
\end{equation}
in \dis{\frac{\Omega^{\odd}(B)}{\im(d)}}. Recall from (\ref{eq 3.3.3}) and
(\ref{eq 3.3.18}) that we have the following diagram of smooth bundle isomorphisms
\cdd{kH^{\even}(Z, F^+|_Z)\oplus\C^M @>\wt{f_0}>> kH^{\even}(Z, F^-|_Z)\oplus\C^N \\
@V h^{\even}_+\oplus\id VV @VV h^{\even}_-\oplus\id V \\ \C^{kn^{\even}_+}\oplus
\C^M @>>\cong> \C^{kn^{\even}_-}\oplus\C^N}
Note that $\wt{f_0}^*((h^{\even}_-)^*d^{kn^{\even}_-}\oplus d^N)$ and $(h^{\even}_+)^*
d^{kn^{\even}_+}\oplus d^M$ are trivial connections on $kH^{\even}(Z, F^+|_Z)\oplus\C^M
\to B$. By (\ref{eq 2.1.8}), (\ref{eq 2.1.9}), (\ref{eq 2.1.10}), Remark \ref{remark 2},
and the fact that $\wt{f_0}$ covers the identity map $\id_B$ we have
\begin{equation}\label{eq 3.3.28}
\begin{split}
&\CS(\wt{f_0}^*(k\nabla^{H^{\even}(Z, F^-|_Z), u}\oplus d^N), k\nabla^{H^{\even}(Z,
F^+|_Z), u}\oplus d^M)\\
=&\CS(\wt{f_0}^*(k\nabla^{H^{\even}(Z, F^-|_Z), u}\oplus d^N), \wt{f_0}^*((h^{\even}_-)^*
d^{kn^{\even}_-}\oplus d^N))\\
&\quad+\CS(\wt{f_0}^*((h^{\even}_-)^*d^{kn^{\even}_-}\oplus d^N), k\nabla^{H^{\even}
(Z, F^+|_Z), u}\oplus d^M)\\
=&\CS(k\nabla^{H^{\even}(Z, F^-|_Z), u}\oplus d^N, (h^{\even}_-)^*d^{kn^{\even}_-}
\oplus d^N)\\
&\quad+\CS((h^{\even}_+)^*d^{kn^{\even}_+}\oplus d^M, k\nabla^{H^{\even}(Z, F^+|_Z), u}
\oplus d^M)\\
=&-\CS((h^{\even}_-)^*d^{kn^{\even}_-}, k\nabla^{H^{\even}(Z, F^-|_Z), u})+\CS
((h^{\even}_+)^*d^{kn^{\even}_+}, k\nabla^{H^{\even}(Z, F^+|_Z), u})
\end{split}
\end{equation}
in \dis{\frac{\Omega^{\odd}(B)}{\Omega^{\odd}_\Q(B)}}. By a similar argument we have
\begin{equation}\label{eq 3.3.31}
\begin{split}
&-\CS(\wt{f_1}^*(k\nabla^{H^{\odd}(Z, F^-|_Z), u}\oplus d^N), k\nabla^{H^{\odd}
(Z, F^+|_Z), u}\oplus d^M)\\
=&-\CS((h^{\odd}_+)^*d^{kn^{\odd}_+}, k\nabla^{H^{\odd}(Z, F^+|_Z), u})+
\CS((h^{\odd}_-)^*d^{kn^{\odd}_-}, k\nabla^{H^{\odd}(Z, F^-|_Z), u})
\end{split}
\end{equation}
in \dis{\frac{\Omega^{\odd}(B)}{\Omega^{\odd}_\Q(B)}}. By (\ref{eq 3.3.28}) and
(\ref{eq 3.3.31}), (\ref{eq 3.3.27}) becomes
\begin{equation}\label{eq 3.3.32}
\begin{split}
&\CS(\wt{f}^*(k\nabla^{H(Z, F^-|_Z), u}\oplus\un{d^N}), k\nabla^{H(Z, F^+|_Z), u}\oplus
\un{d^M})\\
=&\CS((h^{\even}_+)^*d^{kn^{\even}_+}, k\nabla^{H^{\even}(Z, F^+|_Z), u})-
\CS((h^{\even}_-)^*d^{kn^{\even}_-}, k\nabla^{H^{\even}(Z, F^-|_Z), u})\\
&\quad-\CS((h^{\odd}_+)^*d^{kn^{\odd}_+}, k\nabla^{H^{\odd}(Z, F^+|_Z), u}+
\CS((h^{\odd}_-)^*d^{kn^{\odd}_-}, k\nabla^{H^{\odd}(Z, F^-|_Z), u})
\end{split}
\end{equation}
in \dis{\frac{\Omega^{\odd}(B)}{\Omega^{\odd}_\Q(B)}}. Therefore (\ref{eq 3.3.1})
becomes
\begin{equation}\label{eq 3.3.33}
\begin{split}
&\CS((h^{\even}_+)^*d^{kn^{\even}_+}, k\nabla^{H^{\even}(Z, F^+|_Z), u})-
\CS((h^{\even}_-)^*d^{kn^{\even}_-}, k\nabla^{H^{\even}(Z, F^-|_Z), u})\\
&\quad-\CS((h^{\odd}_+)^*d^{kn^{\odd}_+}, k\nabla^{H^{\odd}(Z, F^+|_Z), u}+
\CS((h^{\odd}_-)^*d^{kn^{\odd}_-}, k\nabla^{H^{\odd}(Z, F^-|_Z), u})\\
=&\int_{X/B}e(\nabla^{T^VX})\wedge\CS(\wt{j}^*k\nabla^{-, u}, k\nabla^{+, u})
\end{split}
\end{equation}
in \dis{\frac{\Omega^{\odd}(B)}{\Omega^{\odd}_\Q(B)}}. Since $(h^\bullet_\pm)^*
d^{kn^\bullet_\pm}$ is a trivial connection on $kH^\bullet(Z, F^\pm|_Z)\to B$, it follows
from (\ref{eq 3.3.26}) that the left-hand side of (\ref{eq 3.3.33}) is a differential form
respective of left-hand side of (\ref{eq 3.3.25}), and the right-hand side of
(\ref{eq 3.3.33}) is a differential form respective of the right-hand side of
(\ref{eq 3.3.25}). Thus (\ref{eq 3.3.33}) implies that (\ref{eq 3.3.25}) holds.
\end{proof}

Let $F\to X$ be a $\Z_2$-graded complex flat vector bundle with $\Z_2$-graded flat
connection $\nabla^F$. By (\ref{eq 3.3.26}) and Lemma \ref{lemma 3.2.1} we have
\begin{equation}\label{eq 3.3.34}
\begin{split}
\re(\CCS(H(Z, F|_Z), \nabla^{H(Z, F|_Z)}))&=\sum_{k=0}(-1)^k\re(\CCS(H^k(Z, F^+|_Z),
\nabla^{H^k(Z, F^+|_Z)}))\\
&\quad-\sum_{k=0}(-1)^k\re(\CCS(H^k(Z, F^-|_Z), \nabla^{H^k(Z, F^-|_Z)})).
\end{split}
\end{equation}

We now deduce (\ref{eq 1.0.2}) for $\dim(Z)$ even.
\begin{coro}\label{coro 3.3.4}
Let $\pi:X\to B$ be a submersion with closed fibers $Z$ with $\dim(Z)$ even, and $F\to X$
a complex flat vector bundle with flat connection $\nabla^F$. By putting a Hermitian
metric $g^F$ on $F\to X$ and the induced $\Z$-graded Hermitian metric $g^{H(Z, F|_Z)}$ on
$H(Z, F|_Z)\to B$ we have
\begin{equation}\label{eq 3.3.35}
\sum_{k=0}^n(-1)^k\re(\CCS(H^k(Z, F|_Z), \nabla^{H^k(Z, F|_Z)}))=\int_{X/B}e(T^VX)\cup
\re(\CCS(F, \nabla^F))
\end{equation}
in $H^{\odd}(B; \R/\Q)$.
\end{coro}
\begin{proof}
Let $F\to X$ be a complex flat vector bundle with flat connection $\nabla^F$. Write
$\ell=\rk(F)$. Put a Hermitian metric $g^F$ on $F\to X$ and a trivial metric $g^\ell$ on
$\C^\ell\to X$. Choose and fix a trivial connection $d^\ell$ on $\C^\ell\to X$. Define a
$\Z_2$-graded complex vector bundle $\FF\to X$ by $\FF^+=F$ and $\FF^-=\C^\ell$, and a
$\Z_2$-graded connection $\nabla^{\FF}=\nabla^+\oplus\nabla^-$ on $\FF\to X$, where
$\nabla^+=\nabla^F$ and $\nabla^-=d^\ell$. Then $\FF\to X$ is a $\Z_2$-graded complex flat
vector bundle of virtual rank zero with $\Z_2$-graded flat connection $\nabla^{\FF}$, and
is equipped with a $\Z_2$-graded Hermitian metric $g^F\oplus g^\ell$. By (\ref{eq 3.2.10})
and Remark \ref{remark 2} we have
$$\CCS(\FF, \nabla^{\FF})=\CCS(F, \nabla^F)-\CCS(\C^\ell, d^\ell)=\CCS(F, \nabla^F)$$
in $H^{\odd}(B; \C/\Q)$. Thus
\begin{equation}\label{eq 3.3.36}
\re(\CCS(\FF, \nabla^{\FF}))=\re(\CCS(F, \nabla^F))
\end{equation}
in $H^{\odd}(B; \R/\Q)$. Denote by $g^{H(Z, \FF|_Z)}$ the induced $\Z$-graded Hermitian
metric on the $\Z$-graded complex flat vector bundle $H(Z, \FF|_Z)\to B$ with $\Z$-graded
flat connection
$\nabla^{H(Z, \FF|_Z)}$. By applying Corollary \ref{coro 3.3.3} to $(\FF, \nabla^{\FF})$,
(\ref{eq 3.3.25}) and (\ref{eq 3.3.36}) imply
\begin{equation}\label{eq 3.3.37}
\re(\CCS(H(Z, \FF|_Z), \nabla^{H(Z, \FF|_Z)}))=\int_{X/B}e(T^VX)\cup\re(\CCS(F, \nabla^F)).
\end{equation}
By (\ref{eq 3.3.34}), the left-hand side of (\ref{eq 3.3.37}) becomes
\begin{equation}\label{eq 3.3.38}
\begin{split}
&\re(\CCS(H(Z, \FF|_Z), \nabla^{H(Z, \FF|_Z)}))\\
=&\sum_{k=0}(-1)^k\re(\CCS(H^k(Z, F|_Z), \nabla^{H^k(Z, F|_Z)}))\\
&\quad-\sum_{k=0}(-1)^k\re(\CCS(H^k(Z, \C^\ell|_Z), \nabla^{H^k(Z, \C^\ell|_Z)}))\\
=&\sum_{k=0}(-1)^k\re(\CCS(H^k(Z, F|_Z), \nabla^{H^k(Z, F|_Z)}))\\
&\quad-\rk(F)\sum_{k=0}(-1)^k\re(\CCS(H^k(Z, \C|_Z), \nabla^{H^k(Z, \C|_Z)})).
\end{split}
\end{equation}
As argued in \cite[p.614]{MZ08}, the second part of \cite[Theorem 3.12]{B05} implies that
$$\sum_{k=0}^n(-1)^k\re(\CCS(H^k(Z, \C|_Z), \nabla^{H^k(Z, \C|_Z)}))=0$$
in $H^{\odd}(B; \R/\Q)$. Thus (\ref{eq 3.3.38}) becomes
\begin{equation}\label{eq 3.3.39}
\re(\CCS(H(Z, \FF|_Z), \nabla^{H(Z, \FF|_Z)}))=\sum_{k=0}^n(-1)^k\re(\CCS(H^k(Z, F|_Z),
\nabla^{H^k(Z, F|_Z)}))
\end{equation}
in $H^{\odd}(B; \R/\Q)$. It follows from (\ref{eq 3.3.37}) and (\ref{eq 3.3.39}) that
(\ref{eq 3.3.35}) holds.
\end{proof}
\bibliographystyle{amsplain}
\bibliography{MBib}

\providecommand{\bysame}{\leavevmode\hbox to3em{\hrulefill}\thinspace}
\providecommand{\MR}{\relax\ifhmode\unskip\space\fi MR }
\providecommand{\MRhref}[2]{%
  \href{http://www.ams.org/mathscinet-getitem?mr=#1}{#2}
}
\providecommand{\href}[2]{#2}
\begin{thebibliography}{10}

\bibitem{AB68}
Michael~F. Atiyah and Raoul Bott, \emph{A {L}efschetz fixed point formula for
  elliptic complexes. {II}. {A}pplications}, Ann. of Math. (2) \textbf{88}
  (1968), 451--491.

\bibitem{APS76}
Michael~F. Atiyah, Vijay~K. Patodi, and Isadore~M. Singer, \emph{Spectral
  asymmetry and {R}iemannian geometry. {III}}, Math. Proc. Cambridge Philos.
  Soc. \textbf{79} (1976), no.~1, 71--99.

\bibitem{AS71}
Michael~F. Atiyah and Isadore~M. Singer, \emph{The index of elliptic operators.
  {IV}}, Ann. of Math. (2) \textbf{93} (1971), 119--138.

\bibitem{BGV}
Nicole Berline, Ezra Getzler, and Mich{\`e}le Vergne, \emph{Heat kernels and
  {D}irac operators}, Grundlehren Text Editions, Springer-Verlag, Berlin, 2004,
  Corrected reprint of the 1992 original.

\bibitem{B86}
Jean-Michel Bismut, \emph{The {A}tiyah-{S}inger index theorem for families of
  {D}irac operators: two heat equation proofs}, Invent. Math. \textbf{83}
  (1986), no.~1, 91--151.

\bibitem{B96}
\bysame, \emph{Local index theory, eta invariants and holomorphic torsion: a
  survey}, Surveys in differential geometry, {V}ol. {III} ({C}ambridge, {MA},
  1996), Int. Press, Boston, MA, 1998, pp.~1--76.

\bibitem{B05}
\bysame, \emph{Eta invariants, differential characters and flat vector
  bundles}, Chinese Ann. Math. Ser. B \textbf{26} (2005), 15--44.

\bibitem{BC89}
Jean-Michel Bismut and Jeff Cheeger, \emph{$\eta$-invariants and their
  adiabatic limits}, J. Amer. Math. Soc. \textbf{2} (1989), 33--70.

\bibitem{BF86a}
Jean-Michel Bismut and Daniel~S. Freed, \emph{The analysis of elliptic
  families. {I}. {M}etrics and connections on determinant bundles}, Comm. Math.
  Phys. \textbf{106} (1986), no.~1, 159--176.

\bibitem{BF86}
\bysame, \emph{The analysis of elliptic families. {II}. {D}irac operators, eta
  invariants, and the holonomy theorem}, Comm. Math. Phys. \textbf{107} (1986),
  no.~1, 103--163.

\bibitem{BL95}
Jean-Michel Bismut and John Lott, \emph{Flat vector bundles, direct images and
  higher real analytic torsion}, J. Amer. Math. Soc. \textbf{8} (1995), no.~2,
  291--363.

\bibitem{BZ92}
Jean-Michel Bismut and Weiping Zhang, \emph{An extension of a theorem by
  {C}heeger and {M}\"uller}, Ast\'erisque (1992), no.~205, 235, With an
  appendix by Fran{\c{c}}ois Laudenbach.

\bibitem{BT82}
Raoul Bott and Loring~W. Tu, \emph{Differential forms in algebraic topology},
  Graduate Texts in Mathematics, vol.~82, Springer-Verlag, New York-Berlin,
  1982.

\bibitem{BM06}
Jochen Br\"uning and Xiaonan Ma, \emph{An anomaly formula for {R}ay-{S}inger
  metrics on manifolds with boundary}, Geom. Funct. Anal. \textbf{16} (2006),
  no.~4, 767--837.

\bibitem{CS85}
Jeff Cheeger and James Simons, \emph{Differential characters and geometric
  invariants}, in Geometry and Topology (College Park, Md., 1983/84), Lecture
  Notes in Math. \textbf{1167} (1985), 50--80.

\bibitem{D91}
Xianzhe Dai, \emph{Adiabatic limits, nonmultiplicativity of signature, and
  {L}eray spectral sequence}, J. Amer. Math. Soc. \textbf{4} (1991), 265--321.

\bibitem{FL10}
Daniel~S. Freed and John Lott, \emph{An index theorem in differential
  ${K}$-theory}, Geom. Topol. \textbf{14} (2010), 903--966.

\bibitem{GL15}
Alexander Gorokhovsky and John Lott, \emph{A {H}ilbert bundle description of
  differential {$K$}-theory}, Adv. Math. \textbf{328} (2018), 661--712.

\bibitem{GHV}
Werner Greub, Stephen Halperin, and Ray Vanstone, \emph{Connections, curvature,
  and cohomology. {V}ol. {I}: {D}e {R}ham cohomology of manifolds and vector
  bundles}, Academic Press, New York-London, 1972, Pure and Applied
  Mathematics, Vol. 47.

\bibitem{H16}
Man-Ho Ho, \emph{The flat {G}rothendieck-{R}iemann-{R}och theorem without
  adiabatic techniques}, J. Geom. Phys. \textbf{107} (2016), 162--174.

\bibitem{H94}
Dale Husemoller, \emph{Fibre bundles}, third ed., Graduate Texts in
  Mathematics, vol.~20, Springer-Verlag, New York, 1994.

\bibitem{K08}
Max Karoubi, \emph{{$K$}-theory}, Classics in Mathematics, Springer-Verlag,
  Berlin, 2008, An introduction, Reprint of the 1978 edition, With a new
  postface by the author and a list of errata.

\bibitem{LM89}
H.~Blaine Lawson, Jr. and Marie-Louise Michelsohn, \emph{Spin geometry},
  Princeton Mathematical Series, vol.~38, Princeton University Press,
  Princeton, NJ, 1989.

\bibitem{L17}
Bo~Liu, \emph{Functoriality of equivariant eta forms}, J. Noncommut. Geom.
  \textbf{11} (2017), no.~1, 225--307.

\bibitem{L94}
John Lott, \emph{$\mathbb{R}/\mathbb{Z}$ index theory}, Comm. Anal. Geom.
  \textbf{2} (1994), 279--311.

\bibitem{MM07}
Xiaonan Ma and George Marinescu, \emph{Holomorphic {M}orse inequalities and
  {B}ergman kernels}, Progress in Mathematics, vol. 254, Birkh\"auser Verlag,
  Basel, 2007.

\bibitem{MZ08}
Xiaonan Ma and Weiping Zhang, \emph{Eta-invariants, torsion forms and flat
  vector bundles}, Math. Ann. \textbf{340} (2008), no.~3, 569--624.

\bibitem{MF79}
Alexandr~S. Mi{\v s}{\v c}enko and Anatoli{\u\i}~T. Fomenko, \emph{The index of
  elliptic operators over {$C^{\ast} $}-algebras}, Izv. Akad. Nauk SSSR Ser.
  Mat. \textbf{43} (1979), no.~4, 831--859, 967.

\bibitem{SS10}
James Simons and Dennis Sullivan, \emph{Structured vector bundles define
  differential {$K$}-theory}, Quanta of maths, Clay Math. Proc., vol.~11, Amer.
  Math. Soc., Providence, RI, 2010, pp.~579--599.

\bibitem{Z04}
Weiping Zhang, \emph{Sub-signature operators, {$\eta$}-invariants and a
  {R}iemann-{R}och theorem for flat vector bundles}, Chinese Ann. Math. Ser. B
  \textbf{25} (2004), no.~1, 7--36.

\end{thebibliography}
\end{document}